\newtheorem{lemma}{Lemma}
\newtheorem{theorem}{Theorem}
\newtheorem{corollary}{Corollary}
\newtheorem{proposition}{Proposition}
\newtheorem{remark}{Remark}
\theoremstyle{definition}
\newtheorem{example}{Example}
\newtheorem{definition}{Definition}
\newtheorem{definition-lemma}{Definition-Lemma}
\newtheorem{definition-theorem}{Definition-Theorem}
\newtheorem*{ack}{Acknowledgements}
\newtheorem*{convention}{Convention}
\newcommand{\ov}{\overline}
\newcommand{\ot}{{\otimes}}
\newcommand{\Hom}{\mathrm{Hom}}
\newcommand{\Hoch}{\mathrm{Hoch}}
\newcommand{\Cycl}{\mathrm{Cycl}}
\newcommand{\Lag}{\mathrm{Lag}}
\newcommand{\Cont}{\mathrm{Cont}}
\renewcommand{\a}{\alpha}
\renewcommand{\b}{\beta}
\begin{document}

\title[Lie bialgebras and the cyclic homology of $A_\infty$ structures]{Lie bialgebras and the cyclic homology of\\ $A_\infty$ structures in topology}

\author{Xiaojun Chen}
\address{Department of Mathematics, University of Michigan, 530 Church Street, Ann Arbor, MI 48109, United States}
\email{xch@umich.edu}

\begin{abstract}
$A_\infty$ categories are a mathematical structure that appears in
topological field theory, string topology, and symplectic topology.
This paper studies the cyclic homology of a Calabi-Yau $A_\infty$
category, and shows that it is naturally an equivariant topological
conformal field theory, and in particular, contains an involutive
Lie bialgebra. Applications of the theory to string topology and the
Fukaya category are given; in particular, it is shown that there is
a Lie bialgebra homomorphism from the cyclic cohomology of the
Fukaya category of a symplectic manifold with contact type boundary
to the linearized contact homology of the boundary.
\end{abstract}


\maketitle

\tableofcontents

\section{Introduction}

In this article we study some algebraic structures on the cyclic
homology of an $A_\infty$ category that appears in topology.
$A_\infty$ structures have been widely studied in the past decades
by mathematicians from various fields, such as symplectic topology,
homological algebra, and string field theory. On the other hand, the cyclic
homology of algebras and categories has also played an important
role in the study of, for example, the non-commutative geometry and
$K$-theory. The motivation for us to study the cyclic homology of
$A_\infty$ structures is the recent work of Costello
(\cite{Costello2}), which asserts that:
\begin{itemize}
\item A Calabi-Yau $A_\infty$ category $\mathcal A$ is homotopy equivalent to an open topological conformal field theory (TCFT);
\item Associated to a Calabi-Yau $A_\infty$ category $\mathcal A$ is a natural open-closed TCFT;
\item The closed states of the associated open-closed TCFT is the Hochschild cohomology of $\mathcal A$.
\end{itemize}
Some similar work has also previously been obtained by Kontsevich,
part of which was summarized in \cite[\S11]{KS}. A Calabi-Yau
$A_\infty$ category is an $A_\infty$ category with a non-degenerate
pairing on the morphism spaces. Among the examples are the Fukaya
category of Lagrangian submanifolds of symplectic manifolds, and the
category of coherent sheaves on Calabi-Yau manifolds. Roughly
speaking, a {\it closed} TCFT is a functor from Segal's category to
the category of chain complexes, where Segal's category is the
category whose objects are the natural numbers (identified with
circles) and the morphisms between two sets of circles are (the
chain complex of) the moduli space of Riemann surfaces connecting
them. The open and open-closed TCFT's are defined similarly, where
the objects are the intervals and the unions of intervals and
circles respectively.

There is an $S^1$ action by rotation on the inputs and outputs of a
closed TCFT. In \cite{Getzler2} Getzler first studied the TCFT's
which are invariant under the $S^1$ action. We shall call these
TCFT's {\it equivariant} (with respect to the $S^1$ action). One of
the key observations is that Costello's functor from the open TCFT
to the closed TCFT respects the $S^1$ rotation. By modulo such a
rotation, one obtains on the one side, the cyclic cohomology of a
Calabi-Yau $A_\infty$ category; and on the other side, an
equivariant TCFT:

\begin{theorem}[Theorem~\ref{thm_ETCFT}]\label{main_thm}
Let $\mathcal A$ be a Calabi-Yau $A_\infty$ category. Then the cyclic cohomology of $\mathcal A$ has the structure of
an equivariant topological conformal field theory.
\end{theorem}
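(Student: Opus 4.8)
The plan is to build on Costello's construction recalled above and to upgrade it to the $S^1$-equivariant setting in the sense of Getzler. First I would recall that a Calabi-Yau $A_\infty$ category $\mathcal A$ determines an open TCFT, and that Costello's functor produces from it a closed (in fact open-closed) TCFT whose complex of closed states is the Hochschild cochain complex $\Hoch^\bullet(\mathcal A)$. I would work throughout with an explicit combinatorial model for the relevant moduli spaces of Riemann surfaces with boundary and punctures (ribbon graphs / nodal discs with marked points), since it is at the chain level that the extra structure has to be carried and checked.

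The first real step is to identify the circle action. On the closed inputs and outputs of the open-closed TCFT there is the rotation action of $S^1$; on the combinatorial model this is the reparametrization of the boundary circles of the glued surface, and I would verify that it is compatible with the operadic gluing up to a coherent system of homotopies, so that the open-closed TCFT is naturally an algebra over an $S^1$-equivariant version of the open-closed moduli operad. The key point — essentially already present in Costello's argument, as the introduction emphasizes — is that the functor from open to closed states does not mix the $S^1$-directions, so the induced $S^1$-action on $\Hoch^\bullet(\mathcal A)$ is precisely Connes' cyclic structure, i.e. the mixed complex $(\Hoch^\bullet(\mathcal A),\delta,B)$. Matching the topological rotation of a single closed puncture with the algebraic operator $B$ is where I would spend the most care, exhibiting the explicit chain homotopy on the Hochschild complex coming from that rotation.

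The second step is to pass to the homotopy quotient, following Getzler's definition of an equivariant TCFT as an algebra over the chains of the Borel constructions $\mathcal M_{g,n}\times_{S^1}ES^1$ (equivalently, the string-bracket / gravity structure obtained from the associated Gysin sequences). Since the $S^1$-action on the moduli of surfaces with closed punctures is not free — surfaces with automorphisms — I would use the Borel construction to obtain well-defined chains and a genuine operadic composition, and then feed these into Costello's machine. Taking $S^1$-homotopy orbits on the closed states converts $\Hoch^\bullet(\mathcal A)$, with its mixed structure from the first step, into the cyclic cochain complex $\Cycl^\bullet(\mathcal A)$ (in the periodic, resp. negative, variant dictated by the orientation conventions), and the homotopy-quotiented moduli chains act on it; the equivariant TCFT axioms are then inherited from Costello's non-equivariant ones together with the standard properties of the Borel construction.

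The main obstacle I anticipate is exactly the chain-level coherence in this last step: one must show that the $S^1$-action on Costello's ribbon-graph (or nodal-surface) model commutes with the differential and with all gluing maps up to a compatible family of higher homotopies, so that after the Borel construction one still has an honest algebra over the equivariant open-closed moduli operad rather than merely a homotopy-level statement. A secondary point, which I expect to go through cleanly, is that the Calabi-Yau condition — the non-degenerate cyclic pairing on the morphism spaces — is precisely what makes the closed-state identification survive the passage to the $S^1$-quotient, since that pairing is already $S^1$-invariant. Once these are in place, restricting to the closed part and identifying the resulting complex with $\Cycl^\bullet(\mathcal A)$ yields the theorem.
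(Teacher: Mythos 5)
Your overall idea --- take Costello's open-closed TCFT attached to a Calabi-Yau $A_\infty$ category and pass to the quotient by the $S^1$ rotation on the closed boundaries --- is the same as the paper's, but your technical route is genuinely different, and its central step is left unresolved. The paper does not go through the mixed complex, the operator $B$, or a Borel construction at all. Instead it exploits the comparison of two explicit models already present in Costello's work: the moduli $G_{g,h,r,s}$ of surfaces glued from discs and annuli, with one marked point on each closed boundary circle (this is the model whose closed states give the Hochschild complex), and the moduli $D_{g,h,r,s}$ of Costello's original dual decomposition \cite{Costello1}, in which the closed strings appear only as interior marked points. The difference between the two is exactly the choice of a marked point on each closed boundary circle, i.e.\ a free $S^1$-torsor per closed boundary; forgetting it is the quotient by the rotation, and at the level of closed states it replaces $\Hom(\lambda_1,\lambda_2)\otimes\cdots\otimes\Hom(\lambda_n,\lambda_1)$ by its coinvariants under the cyclic operator, which is literally the Connes complex $\Hoch_*(\mathcal A)/(1-t)$ used in Definition~\ref{def_cycl} (dualized to get cyclic cochains). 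Because the relevant action (moving the marked point along its boundary circle) is free, the strict quotient already has the right homotopy type, so the equivariance is obtained geometrically, with no need to verify a coherent system of higher homotopies for an $S^1$-action on a ribbon-graph model.

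This is precisely where your proposal has a gap: you correctly identify the chain-level coherence of the $S^1$-action with the differential and with all gluings as the main obstacle, but you only flag it and propose to handle it with Borel constructions; nothing in the proposal actually establishes that after homotopy quotient one still has an honest algebra over the equivariant moduli chains, which is the content of the theorem. The paper's choice of model is exactly what dissolves that obstacle, so without either that observation or a worked-out coherence argument the proof is incomplete. A second, smaller point: taking $S^1$-homotopy orbits of the closed states equipped with the Connes/$B$ mixed structure produces ordinary cyclic homology (equivalently, over a field containing $\mathbb Q$, the Connes quotient complex of Definition~\ref{def_cycl}), not a ``periodic, resp.\ negative variant''; those would correspond to the Tate construction, resp.\ homotopy fixed points, and are not what the theorem asserts. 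Your remark that the Calabi-Yau pairing is what allows the closed-state identification to survive the quotient is fine, and matches the role it plays in Costello's identification of open TCFTs with Calabi-Yau $A_\infty$ categories.
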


In  \cite{Getzler1,Getzler2} Getzler showed that a closed TCFT has
the structure of a Batalin-Vilkovisky algebra while an equivariant
TCFT has the structure of a {\it gravity algebra}. A gravity algebra
is a Lie algebra with more constraints, which he called the {\it
generalized Jacobi identities}. We shall give an explicit formula for
the Lie algebra of Getzler on the cyclic cohomology of a Calabi-Yau
$A_\infty$ category. In fact, we show  more, namely, the cyclic
cohomology is an {\it involutive Lie bialgebra} (see
Def.~\ref{def_Liebialg}):

\begin{theorem}[Theorem~\ref{thm_liebi}]\label{2nd_thm}
Let $\mathcal A$ be a Calabi-Yau $A_\infty$ category. Then the cyclic cochain complex of $\mathcal A$ has the structure of
a differential graded involutive Lie bialgebra.\end{theorem}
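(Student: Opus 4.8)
The plan is to write down explicit chain-level operations on the cyclic cochain complex $\Cycl^\bullet(\mathcal A)$ — a Lie bracket, a Lie cobracket, and the differential — and then to verify the differential graded involutive Lie bialgebra axioms by a combinatorial matching of terms governed by boundary decompositions of a moduli space of ribbon graphs. First I would fix the model: $\Cycl^\bullet(\mathcal A)$ is the complex of Hochschild cochains invariant under cyclic rotation of the arguments (with Koszul signs), equipped with the $A_\infty$ Hochschild differential $b$. Non-degeneracy of the Calabi-Yau pairing $\langle-,-\rangle\colon\Hom(X,Y)\ot\Hom(Y,X)\to k[-n]$ produces an inverse ``copairing'' $c=\sum_\alpha e_\alpha\ot e^\alpha$ of degree $n$, and, dually, lets one view a cyclic cochain as an invariant element of a cyclic tensor power of morphism spaces — a formal necklace whose beads are morphisms of $\mathcal A$. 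I would work with necklaces throughout.

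For the \textbf{bracket}, for cyclic cochains $f,g$ I would set $[f,g]=\sum\pm\,f\circ g$, summing over all ways of pairing one chosen bead of $f$ with one chosen bead of $g$ via $\langle-,-\rangle$, deleting both, and concatenating the two resulting open strings into a single cyclic word, together with the analogous terms in which higher $A_\infty$ products are inserted at the splice; on cohomology this should recover Getzler's Lie bracket coming from the equivariant TCFT of Theorem~\ref{main_thm}. Graded antisymmetry follows from cyclic invariance and the (anti)symmetry of $\langle-,-\rangle$; that $b$ is a derivation of $[-,-]$ and that the graded Jacobi identity holds I would prove by term-matching — each term of $[[f,g],h]\pm(\text{cyclic})$, resp.\ of $b[f,g]-[bf,g]\mp[f,bg]$, is a ribbon graph with two splicing edges, resp.\ with one splicing edge and one $A_\infty$ vertex, and the identities are precisely the cancellation in pairs of the codimension-one strata. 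This gives a DG Lie algebra. The \textbf{cobracket} is the dual operation: $\delta f=\sum\pm\,f'\ot f''$, obtained by cutting the necklace of $f$ at two of its gaps into two arcs and capping each arc into its own cyclic word using the copairing $c$ (with $A_\infty$-corrected terms as well). Co-antisymmetry, co-Jacobi, and the co-derivation property of $b$ follow by the same bookkeeping with the roles of $\langle-,-\rangle$ and $c$ exchanged, giving a DG Lie coalgebra.

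It then remains to verify the two genuinely bialgebraic identities. For the \textbf{Drinfeld compatibility} — that $\delta$ is a graded derivation of $[-,-]$ — I would expand $\delta[f,g]$ and $[\delta f,g]\pm[f,\delta g]$ into ribbon graphs carrying one splice and one cut and check that every such graph appears on both sides with the same sign; this is the most delicate sign computation. For \textbf{involutivity}, $[-,-]\circ\delta=0$, applying $[-,-]\circ\delta$ to a necklace $f$ produces a sum over the data of (two cut gaps; a choice of capping and concatenation; two beads, one in each resulting necklace, to be re-paired), and I would exhibit a fixed-point-free, sign-reversing involution on this set of terms — roughly, interchanging the cut data with the re-gluing data — so that the entire sum is identically zero at the chain level; this is the algebraic shadow of the Chas--Sullivan involutivity relation for the string bracket and cobracket. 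Assembling the DG Lie algebra, the DG Lie coalgebra, the compatibility, and involutivity yields the asserted structure.

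I expect involutivity to be the main obstacle, both because it requires pinning down the correct sign-reversing involution and because one must check it is well defined in the presence of the degenerate contributions — ``short'' necklaces and the strict unit of $\mathcal A$ — that also arise in making the bracket and cobracket well defined on the cyclic (as opposed to Hochschild) complex. Organizing all four verifications uniformly through the ribbon-graph / moduli-space boundary formalism, where every identity becomes an instance of ``boundary of boundary vanishes'', is what I would rely on to keep the sign bookkeeping under control.
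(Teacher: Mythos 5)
Your proposal follows essentially the same route as the paper: cyclic cochains are treated as cyclically invariant necklaces (the paper uses Quillen's identification of $\Cycl^*$ with the cyclic (co)invariants of the bar construction $T(\Sigma V)$, so that all signs are Koszul), the bracket is the sum over contractions of one bead of $f$ with one bead of $g$ by the induced symplectic pairing followed by splicing, the cobracket contracts two beads of a single necklace and splits it into two, and every axiom is verified by matching and cancelling terms in pairs --- exactly your sign-reversing bookkeeping, phrased algebraically rather than through ribbon-graph moduli boundary strata (the paper only mentions the gluing/splitting-of-open-states picture as motivation).

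Two corrections to your plan. First, the chain-level bracket and cobracket carry \emph{no} higher $A_\infty$ correction terms: the $m_k$ enter only through the Hochschild differential $b$, and the point of the Calabi--Yau (cyclic invariance) hypothesis is that the bare contraction/splitting operations already commute with $b$ --- the extra terms in $b[f,g]$ versus $[bf,g]\pm[f,bg]$, in which some $\overline m_r$ absorbs the inserted basis element $\ov e_p$, cancel against the corresponding terms on the other factor precisely because $\langle m_r(\cdots),\cdot\rangle$ is cyclically invariant. If you literally add $m_k$-insertions at the splice you change the operations, and the strict Jacobi, Drinfeld and involutivity identities you intend to prove are no longer about the right maps. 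Second, your anticipated main obstacle is misplaced: in this algebraic model involutivity is the easiest step, since $\delta\alpha$ is the antisymmetrization of the two outputs of a single contraction-and-split, so $[\,,\,]\circ\delta$ cancels in pairs just as in the Jacobi computation; the real labor is in the Jacobi and co-Jacobi cancellations, the compatibility of $b$ with both operations (where cyclicity is used), and the six-term expansion of $\delta[\alpha,\beta]$ for the Drinfeld identity. Also, since one works with the reduced Connes complex $T^+(\Sigma V)/(1-\overline t)$ and no strict units are invoked, the degenerate short-necklace and unit contributions you worry about do not arise.
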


The construction of the Lie bialgebra is inspired by string topology (\cite{CS02}), 
and a similar result has been obtained by Barannikov (\cite[\S5]{Bara}) and Kontsevich-Soibelman (\cite[\S11]{KS}) in a slightly different and conceptual manner.
In string topology, Chas and Sullivan showed that the
equivariant homology of the free loop space of a compact manifold,
say $M$, also endows a Lie bialgebra structure. To compare these two
structures, recall that K.-T. Chen (\cite{Chen77}) and J. D. S.
Jones (\cite{Jo87}) showed that  the cyclic cohomology of the
cochain algebra of a simply connected $M$ is isomorphic to the
equivariant homology of its free loop space $LM$. However, a
transversality issue in string topology obstructs one to model the
Lie bialgebra of string topology on the cyclic cohomology of {\it
singular} or {\it cellular} cochain algebra of $M$. With the great
efficiency of rational homotopy theory, Hamilton-Lazarev (\cite{HL})
and Lambrechts-Stanley (\cite{LS}) independently construct a cyclic
$A_\infty$ algebra over $\mathbb Q$, which models the cochain
algebra of $M$ with Poincar\'e duality (in fact the one of
Lambrechts-Stanley is a differential graded algebra). Recall that a
cyclic $A_\infty$ algebra is a Calabi-Yau $A_\infty$ category with
one object, and one has the following:

\begin{theorem}[Theorem~\ref{thm_st}]\label{3rd_thm}
Suppose $M$ is a simply connected, compact manifold and $V$ is the cyclic $A_\infty$ algebra which models $M$, then
the Lie bialgebra on the cyclic
cohomology of $V$ models that of Chas and Sullivan in string topology.
\end{theorem}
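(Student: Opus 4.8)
The plan is to realize the statement as an isomorphism of involutive Lie bialgebras by separating the two ingredients of the comparison: the underlying identification of the two vector spaces, which is classical, and the compatibility of the algebraic operations with the geometric ones, which is the actual content. First I would recall that, since $V$ is quasi-isomorphic to the cochain algebra $C^*(M)$ as an $A_\infty$ algebra, its (normalized) cyclic cochain complex is quasi-isomorphic to that of $C^*(M)$; by the theorem of K.-T. Chen and J. D. S. Jones the latter computes the $S^1$-equivariant homology $H^{S^1}_*(LM;\mathbb{Q})$, up to a degree shift governed by the Calabi--Yau dimension $\dim M$ and after passing to the reduced versions on both sides. A further point of Jones' theorem that I would isolate is that Connes' operator $B$ on the cyclic complex corresponds, under this identification, to the $S^1$-action on $LM$. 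Granting all this, Theorem~\ref{3rd_thm} reduces to showing that the bracket and cobracket of Theorem~\ref{thm_liebi} go over to the Chas--Sullivan string bracket and cobracket.

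The next step is to reduce the equivariant comparison to a non-equivariant one. Both sides are built from a Batalin--Vilkovisky structure by passing through a Gysin-type long exact sequence: algebraically, Getzler's (and our) bracket and cobracket on cyclic (co)homology are obtained from the BV algebra on Hochschild (co)homology via Connes' $SBI$-sequence; geometrically, the string bracket and cobracket are obtained from the loop-product BV algebra on $H_{*+\dim M}(LM)$ via the Gysin sequence of the $S^1$-action, i.e. via the marking/erasing maps. These two sequences are intertwined by the Chen--Jones isomorphism (with $B$ matching the $S^1$-transfer, as noted above), so it is enough to match the two BV structures, together with the Calabi--Yau copairing on the algebraic side and Poincar\'e duality on $LM$ on the geometric side; the cobracket is then the dual of the bracket under these pairings on both sides, which also reconciles the two notions of involutivity.

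To match the non-equivariant BV structures I would invoke the established comparison results for simply connected compact manifolds: the identifications of $HH^*(V,V)$ with the loop homology $H_{*+\dim M}(LM;\mathbb{Q})$ as BV algebras (Cohen--Jones, F\'elix--Thomas(--Vigu\'e-Poirier), Merkulov, Tradler--Zeinalian, etc.), together with the fact that the cyclic structure of $V$ encodes the Poincar\'e duality of $M$ and hence makes the Connes operator correspond to the $S^1$-action. Feeding this, together with the compatibility of the two Gysin sequences from the previous paragraph, yields the desired isomorphism of differential graded involutive Lie bialgebras between the cyclic cochain complex of $V$ and a chain model for the equivariant homology of $LM$ carrying the Chas--Sullivan structure.

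The main obstacle is the chain-level identification of the loop product (and of the Sullivan coproduct) with the cup product on Hochschild cochains (and its dual): this is precisely the transversality difficulty that motivates working with the Poincar\'e-duality model $V$ rather than with singular or cellular cochains of $M$. The approach above sidesteps the direct construction of a transversal geometric chain model by building on the cited BV-comparison theorems; the remaining work is then (a) to check that those comparisons are compatible with the cyclic refinement and with the specific model $V$ of Lambrechts--Stanley (or Hamilton--Lazarev), and (b) to carry out the sign and degree-shift bookkeeping imposed by the cyclic grading and by the dimension shift $\dim M$ — tedious but routine once the BV-level statement and the compatibility of the two Gysin sequences are in hand.
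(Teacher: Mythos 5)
Your route is genuinely different from the paper's. The paper's proof is short and direct: quasi-isomorphism invariance of cyclic (co)homology identifies $\Cycl^*(V)$ with $\Cycl^*(C^*(M))$, hence, by Chen--Jones, with an equivariant chain model of $LM$; it then reads the algebraic formulas themselves geometrically on the Poincar\'e-duality model --- the symmetrization operator $N$ is the rotation of base points, and the insertion of the dual pair $(\ov e_p,\omega^{pq}\ov f_q)$ in the bracket and in the cobracket is the algebraic stand-in for transversal intersection followed by concatenation, respectively splitting, of loops. Your plan instead funnels everything through the non-equivariant BV comparison ($HH^*(V,V)\cong H_{*+\dim M}(LM)$) together with the compatibility of Connes' sequence with the $S^1$-Gysin sequence; for the \emph{bracket} this is a reasonable and more structural alternative, since the string bracket is indeed obtained from the loop product through the marking/erasing maps, matching Getzler's construction on the algebraic side.

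The gap is in the cobracket. Your reduction asserts that ``the cobracket is then the dual of the bracket under these pairings on both sides,'' but this is not so: the intersection-type pairing on $H_*(LM)$ is not perfect, and the Chas--Sullivan cobracket is an independent operation, defined by splitting a single equivariant chain at self-intersections of the base point along each loop (and nontrivial only on the reduced equivariant theory); likewise, on the algebraic side $\delta$ is a separate structure map given by inserting the copairing into one cyclic cochain, not the adjoint of $[\,,\,]$. The BV-comparison theorems you invoke (Cohen--Jones, F\'elix--Thomas, Merkulov, Tradler--Zeinalian) identify only the product and the $\Delta$/Connes-$B$ operator; they say nothing about the loop coproduct or its equivariant cobracket. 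So the cobracket half of the Lie bialgebra statement --- exactly where the transversality issue lives and where the paper's direct chain-level interpretation of $\delta$ and $N$ on the Hamilton--Lazarev (or Lambrechts--Stanley, cf.\ \cite{CEG}) model carries the content --- is not delivered by your argument; as written it establishes at best the Lie algebra half, and the Drinfeld compatibility and involutivity comparisons collapse with it.
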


Another application of Theorems \ref{main_thm} and \ref{2nd_thm} is
to symplectic topology. Recall that the Fukaya category of a compact
symplectic manifold (with or without boundary; for the former case,
some convexity conditions are required, for example, the boundary
is of contact type as in Example~\ref{ex_fs} in
\S2.1) is a Calabi-Yau $A_\infty$ category (\cite{Fuk02,Fuk09}). A
direct consequence of the above theorems is:

\begin{corollary} The cyclic cohomology of the Fukaya category of
a symplectic manifold is naturally an equivariant topological
conformal field theory, and in particular, it has the structure of
an involutive Lie bialgebra.
\end{corollary}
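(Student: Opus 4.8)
The plan is to derive the statement as a formal consequence of Theorems~\ref{main_thm} and \ref{2nd_thm}, the only substantive point being to recall why the Fukaya category is a Calabi-Yau $A_\infty$ category in the sense of \S2.1. So the first step is to set up $\mathcal{F}(X)$ for the symplectic manifold $X$ in question: its objects are (graded, spin, unobstructed) Lagrangian submanifolds $L\subset X$; the morphism complex $\Hom(L_0,L_1)$ is the Floer cochain complex $CF^\bullet(L_0,L_1)$, which after a Hamiltonian perturbation making $L_0,L_1$ transverse is freely generated over the Novikov ring by the intersection points; and the operations $m_k$ count rigid pseudo-holomorphic $(k+1)$-gons with boundary on $L_0,\dots,L_k$. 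The $A_\infty$ relations are read off from the codimension-one boundary strata of the compactified one-dimensional moduli spaces, and here the convexity hypothesis on $\partial X$ (contact type boundary, as in Example~\ref{ex_fs}) together with Gromov compactness prevents holomorphic polygons from escaping, so the moduli spaces are compact and the relations hold.

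The second step is to exhibit the Calabi-Yau structure. The non-degenerate pairing is the Poincar\'e--Floer duality pairing $CF^\bullet(L_0,L_1)\otimes CF^{n-\bullet}(L_1,L_0)\to R$ over the coefficient ring $R$, graded symmetric, induced geometrically by counting strips with a cyclic marking; its compatibility with the $m_k$ in the cyclic sense is precisely the cyclic symmetry of the structure constants. Since \S2.1 asks for a \emph{strict} cyclic $A_\infty$ structure, if the geometric model is only cyclically invariant up to homotopy one replaces it by a quasi-isomorphic minimal (respectively strictly cyclic) model via the homological perturbation lemma, which preserves non-degeneracy of the pairing; all of this is carried out in \cite{Fuk02,Fuk09}. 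One must also recall that one works inside the subcategory of unobstructed (or weakly unobstructed with a fixed curvature value) Lagrangians, so that the curvature term $m_0$ vanishes (respectively is central), which is what makes $\mathcal{F}(X)$ an honest $A_\infty$ category.

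With $\mathcal{A}=\mathcal{F}(X)$ now a Calabi-Yau $A_\infty$ category, the corollary is immediate: Theorem~\ref{main_thm} gives that the cyclic cohomology of $\mathcal{F}(X)$ is an equivariant topological conformal field theory, and Theorem~\ref{2nd_thm} gives that its cyclic cochain complex is a differential graded involutive Lie bialgebra, so passing to homology endows the cyclic cohomology with an involutive Lie bialgebra structure, compatibly with the gravity/Lie structure underlying the equivariant TCFT. The main obstacle lies not in this formal deduction but in the input from symplectic topology: establishing transversality and compactness (the latter using the convexity assumption on the boundary), controlling the obstruction term $m_0$, and upgrading the cyclic invariance of the pairing to a strict one. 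None of this is new here, being the work of Fukaya and collaborators, so once those foundations are in place the corollary follows.
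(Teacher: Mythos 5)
Your proposal is correct and follows essentially the same route as the paper: the corollary is obtained as a direct consequence of Theorems~\ref{main_thm} and \ref{2nd_thm} once one invokes the fact, due to Fukaya and Seidel (\cite{Fuk02,Fuk09}), that the Fukaya category (in the exact/unobstructed setting with the Poincar\'e duality pairing, made strictly cyclic) is a Calabi-Yau $A_\infty$ category. The extra detail you supply about transversality, compactness under the convexity hypothesis, and upgrading the pairing to strict cyclicity is exactly the material the paper delegates to those references and to its \S5 discussion.
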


As an application to this corollary, we give a relationship between
the Fukaya category and symplectic field theory introduced by Eliashberg,
Givental and Hofer (\cite{EGH}).
Suppose $M$ is a symplectic manifold with contact type boundary $W$.
Mathematicians have obtained many interesting
structures for $M$ and $W$ in symplectic field theory; in particular, Cieliebak and
Latschev showed in \cite{CL} that,
the linearized contact homology of $W$ (a homology theory on
the space of Reeb orbits in $W$) endows the structure of an
involutive Lie bialgebra. Inspired by their work, we show that:

\begin{theorem}[Theorem~\ref{thm_liebimap}]\label{4th_thm}
Let $M$ be an exact symplectic manifold with contact type boundary $W$.
There is a chain map from the linearized contact chain complex of $W$
to the cyclic cochain complex of the Fukaya category
of $M$, which induces on homology a map of Lie bialgebras.
\end{theorem}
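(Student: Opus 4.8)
The plan is to realize the map by counting punctured holomorphic disks in the symplectic completion $\widehat{M}=M\cup_W\bigl([0,\infty)\times W\bigr)$, equipped with a compatible almost complex structure that is cylindrical on the end, with boundary on the Lagrangians that are objects of $\mathcal{F}(M)$ and with interior punctures asymptotic to closed Reeb orbits of $W$. Take the objects of $\mathcal{F}(M)$ to be the admissible (compact exact) Lagrangians in the interior of $M$. For a good Reeb orbit $\gamma$, define a cyclic cochain $\Phi(\gamma)$ of $\mathcal{F}(M)$ whose value on a cyclic tuple $x_1\otimes\cdots\otimes x_k$ of Floer generators --- with $x_i\in\mathrm{hom}(L_{i-1},L_i)$ and $L_k=L_0$ --- is the signed count of rigid holomorphic disks in $\widehat{M}$ with boundary on $L_0\cup\cdots\cup L_k$, with the consecutive boundary arcs separated by boundary punctures asymptotic to $x_1,\dots,x_k$ in cyclic order, and carrying a single interior positive puncture asymptotic to $\gamma$; extend additively over good orbits. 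The virtual dimension being zero fixes the internal degree of $\Phi(\gamma)$, so $\Phi$ is degree-preserving. Since the linearized contact complex is generated by \emph{unparametrized} Reeb orbits, its natural target is the cyclic ($S^1$-reduced) cochain complex of $\mathcal{F}(M)$, not the Hochschild complex --- this is the structural reason the theorem is stated for cyclic cochains.

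Next I would show $\Phi\circ\partial^{\mathrm{lin}}=d^{\mathrm{cyc}}\circ\Phi$, where $\partial^{\mathrm{lin}}$ is the linearized contact differential --- computed with respect to the augmentation of the contact dga of $W$ determined by the exact filling $M$ --- and $d^{\mathrm{cyc}}$ is the cyclic differential built from the $A_\infty$ operations of $\mathcal{F}(M)$. This is the standard ``boundary of a compact $1$-manifold'' argument applied to the compactified one-dimensional moduli spaces of the disks above. By the SFT compactness theorem the codimension-one boundary breaks into two families: (i) the interior puncture degenerates, splitting off a holomorphic cylinder in the symplectization $\mathbb{R}\times W$ with one negative and one positive end (anchored via the augmentation), the rest remaining a disk in $\widehat{M}$; summing over the intermediate orbit gives $\Phi(\partial^{\mathrm{lin}}\gamma)$; (ii) a Floer trajectory breaks at a boundary puncture or a holomorphic polygon bubbles off along the boundary; summing over the affected slot and the arity assembles $d^{\mathrm{cyc}}(\Phi(\gamma))$. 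Sphere and disk bubbling is excluded by exactness, the Lagrangians stay clear of the Reeb dynamics at infinity, and transversality holds for generic data; the signs are matched by the usual comparison of coherent orientations of SFT moduli with the Fukaya conventions attached to the Calabi--Yau pairing.

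To upgrade $\Phi_*$ to a morphism of involutive Lie bialgebras I would use the explicit descriptions of the two structures. On the cyclic cochain complex of $\mathcal{F}(M)$, by Theorem~\ref{thm_liebi}, the bracket joins two cyclic cochains along one morphism slot each via the copairing dual to the Calabi--Yau pairing and re-cyclicizes, and the cobracket self-splits a cyclic cochain along two slots; on the contact side, the bracket and cobracket of Cieliebak--Latschev (\cite{CL}) count genus-zero curves in $\mathbb{R}\times W$ with, respectively, two positive/one negative and one positive/two negative punctures, anchored in $M$. For the bracket, run the $1$-manifold argument on the moduli of disks in $\widehat{M}$ with prescribed boundary chords and \emph{two} interior positive punctures asymptotic to $\gamma_1,\gamma_2$: one family of boundary strata has the two punctures feeding a pair-of-pants in $\mathbb{R}\times W$ with negative end $\gamma_3$, contributing $\Phi([\gamma_1,\gamma_2]^{\mathrm{lin}})$, while the complementary family splits the disk along a boundary node into two disks carrying $\gamma_1$ and $\gamma_2$ and joined along a Floer generator $c$; summing over $c$ with the copairing, their boundary arcs concatenate into exactly the Fukaya bracket $[\Phi(\gamma_1),\Phi(\gamma_2)]$, and the residual strata are $d^{\mathrm{cyc}}$- or $\partial^{\mathrm{lin}}$-exact. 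The cobracket is the mirror case, involving disks with two interior punctures whose domain disconnects along the broken orbits and a symplectization pair-of-pants with one positive and two negative ends. It is cleanest to package $\Phi$ together with the higher operations $\Phi^{(n)}$ counting disks with $n$ interior punctures as a single ``potential'' satisfying a master equation in the manner of Cieliebak--Latschev; extracting its genus-zero, low-complexity part and passing to homology then yields the strict morphism of involutive Lie bialgebras asserted by the theorem.

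I expect the main obstacle to be twofold. Analytically, one must establish transversality, compactness, and the dimension formula for these new moduli spaces of punctured disks in $\widehat{M}$ with Lagrangian boundary --- controlling multiply covered curves, collisions of interior with boundary punctures, and side bubbling --- which in general forces the exactness hypothesis to do real work or, failing that, an appeal to a virtual perturbation scheme. Combinatorially, the delicate point is to verify that concatenating two boundary arcs along a Floer generator reproduces the structure constants of the bracket and cobracket of Theorem~\ref{thm_liebi} \emph{with the correct signs} coming from coherent orientations and the Calabi--Yau pairing. The conceptual skeleton of each step is merely that the two ends of a $1$-manifold cancel; it is this orientation bookkeeping in the arc-concatenation that I expect to be the most error-prone and to consume the bulk of the argument.
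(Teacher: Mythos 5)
Your proposal follows essentially the same route as the paper: the map is defined by counting punctured holomorphic disks in the completion $\tilde M$ with boundary arcs on the Lagrangians, boundary asymptotics at the Floer generators, and one interior puncture asymptotic to the Reeb orbit, and both the chain-map property and the compatibility with bracket and cobracket are obtained from codimension-one boundaries of one-dimensional moduli spaces, with the two-puncture counts (your $\Phi^{(2)}$, the paper's $f_2$) furnishing the homotopies. The only cosmetic difference is that the paper phrases the cobracket comparison via annuli (cylinders) with both boundary circles on Lagrangian cyclic tuples and one interior puncture, but this is the same degeneration analysis you describe.
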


The paper is organized as follows: In Section 2, we recall the definitions of an $A_\infty$ category and its cyclic homology;
some examples arising from topology are given.
In Section 3 we give the definition of a Calabi-Yau category and discuss
its role in the study of topological conformal field theories. This proves Theorem~\ref{main_thm}.
In Section 4 we first prove Theorem~\ref{2nd_thm}.
Some relationship between the Lie bialgebras on the cyclic chain complex of an $A_\infty$ algebra
and in the non-commutative symplectic geometry is also discussed. Examples from string topology are given and
Theorem~\ref{3rd_thm} is shown.
In Section 5 we discuss some symplectic field theory and prove Theorem~\ref{4th_thm}.

\begin{ack}The current work is inspired by the talks given by Costello and Latschev at the CUNY Einstein Chair Seminar.
The author also benefits from interesting
conversations with Sullivan and Fukaya. He is very grateful to Yongbin Ruan
for many encouragements during the preparation of the paper.
\end{ack}

\section{Cyclic homology of $A_\infty$ categories}

\begin{definition}[$A_\infty$ category]Let $k$ be a field.
An $A_\infty$ category $\mathcal A$ consists of a set of objects $\mathcal Ob(\mathcal A)$,  a graded $k$-vector space
$\Hom(A_1,A_2)$ for each pair of objects $A_1,A_2\in\mathcal Ob(\mathcal A)$, and a sequence of
operators:
$$m_i: \Hom(A_0,A_1)\otimes \Hom(A_1, A_2)\otimes\cdots\otimes \Hom(A_{i-1},A_i)\to \Hom(A_0,A_i), $$
where $|m_i|=i-2$, for $ i=1,2,\cdots,$ satisfying the following $A_\infty$ relations:
$$
\sum_{p=0}^{i-1}\sum_{k=1}^{i-p}(-1)^{|a_1|+\cdots+|a_p|+p+\epsilon_{i-k+1}+\epsilon_k}m_{i-k+1}(a_1,\cdots, a_{p}, m_{k}(a_{p+1},\cdots,a_{p+k}),a_{p+k+1},\cdots,a_i)=0,
$$
where $$\epsilon_k=(k-1)|a_{p+1}|+(k-2)|a_{p+2}|+\cdots+|a_{p+k-1}|+\frac{k(k-1)}{2}$$ and
\begin{multline*}\epsilon_{i-k+1}=(i-k)|a_1|+\cdots+(i-k-p+1)|a_p|
+(i-k-p)(i-2+|a_{p+1}|+\cdots+|a_{p+k}|)\\
+(i-k-p-1)|a_{p+k+1}|+\cdots+|a_{i-1}|+\frac{(i-k)(i-k-1)}{2}.
\end{multline*}
\end{definition}

An $A_\infty$ category with one object is an $A_\infty$ algebra in
the usual sense, and conversely an $A_\infty$ category may be viewed
as an $A_\infty$ algebra ``with several objects". If an $A_\infty$
category $\mathcal A$ has vanishing $m_i$, for $i=3, 4,\cdots$, then
it is a differential graded (DG) category.

\begin{convention}\label{sign_conv} (1) The signs are always complicated in the definition of $A_\infty$ structures.
First, recall the Koszul sign convention:
Suppose $V$ is a graded space, and $\a,\b\in V$; then in any case when we switch $\a\otimes\b$ to $\b\otimes\a$,
there is a $(-1)^{|\a||\b|}$ popping out, i.e., $\a\otimes\b\mapsto (-1)^{|\a||\b|}\b\otimes\a$.

(2) Suppose $(V,\{m_i\})$ is an $A_\infty$ algebra.
Let $\Sigma V=V[1]$ be the desuspension of $V$, and denote by $T(\Sigma V)$ the cotensor algebra generated by $\Sigma V$.
Define
$$\overline{m}_i=\Sigma\circ m_i\circ (\Sigma^{-1})^{\otimes i}: (\Sigma V)^{\otimes i}\to\Sigma V,\,\, i=1,2,\cdots$$
i.e., if we denote the identity $id: V\to\Sigma V, a\mapsto \ov a$, then
$$\overline{m}_i(\ov a_1,\cdots,\ov a_i)=(-1)^{(i-1)|\ov a_1|+(i-2)|\ov a_{2}|+\cdots+|\ov a_{i-1}|}\Sigma(m_i(a_1,a_2,\cdots, a_i)).$$
Then $\overline m=\overline m_1+\overline m_2+\cdots$ defines a degree $-1$ co-differential on the coalgebra $T(\Sigma V)$. $(T(\Sigma V), \overline m)$ is
called the {\it bar construction} of $V$.
\end{convention}

\begin{definition}[Hochschild homology]
Let $\mathcal A$ be an $A_\infty$ category. The {\it Hochschild
chain complex} $\Hoch_*(\mathcal A)$ of $\mathcal A$ is the chain
complex whose underlying vector space is
$$
\bigoplus_{n=1}^{\infty}\bigoplus_{A_0,A_1,\cdots,A_n\in\mathcal Ob(\mathcal A)}
\Hom(A_0,A_1)\otimes\Hom(A_1, A_2)\otimes\cdots\otimes\Hom(A_n,A_0)
$$
with differential $b$ defined by
\begin{eqnarray}&&b(a_0,a_1,\cdots,a_n)\nonumber\\
&=&\sum_{j=0}^{n}\sum_{i=0}^{n-j}\pm(m_{i+j+1}(a_{n-j+1},\cdots,a_{n},a_0,\cdots,a_{i}),a_{i+1},\cdots,a_{n-j})\label{b_prime}\\
&+&\sum_{k=0}^{n-1}\sum_{i=1}^{n-k}\pm(a_0,a_1,\cdots,a_k,m_i(a_{k+1},\cdots,
a_{k+i}),a_{k+i+1},\cdots, a_{n})\label{b_prime'},
\end{eqnarray}
where the signs $\pm$ in (\ref{b_prime}) and (\ref{b_prime'}) are given below.
The associated homology is called the {\it Hochschild homology} of $\mathcal A$,
and is denoted by $HH_*(\mathcal A)$.
\end{definition}

With the sign convention discussed in the previous paragraph, the sign in (\ref{b_prime}) and (\ref{b_prime'}) are assigned as follows:
for simplicity, we assume $\mathcal A$ has one object $A$ (the general case is completely the same), and let $V=\Hom(A,A)$;
the Hochschild complex of $\mathcal A$ is nothing but the {\it twisted tensor product} of $V$ and $T(\Sigma V)$, from which the
sign follows. More precisely, for $(a_0,\ov a_1,\ov a_2,\cdots,\ov a_n)\in V\otimes T(\Sigma V)$,
\begin{eqnarray*}
&&b(a_0,\ov a_1,\ov a_2,\cdots,\ov a_n)\\
&=&\sum_{j}\sum_i(-1)^{\epsilon_1}(\Sigma^{-1}\circ\overline m_{i+j+1}(\ov a_{n-j+1},\cdots,\ov a_n,\ov a_0, \ov a_1,\cdots, \ov a_{i}),\ov a_{i+1},\cdots,\ov a_{n-j})\\
&+&\sum_{k}\sum_{i}(-1)^{\epsilon_2}(a_0,\ov a_1,\cdots, \ov a_{k}, \overline m_i(\ov a_{k+1},\cdots, \ov a_{k+i}),a_{k+i+1},\cdots, \ov a_n),
\end{eqnarray*}
where $\epsilon_1=(|\ov a_{n-j+1}|+\cdots+|\ov a_n|)(|\ov a_0|+\cdots+|\ov a_{n-j}|)$
and $\epsilon_2=|a_0|+|\ov a_1|+\cdots+|\ov a_{k}|$ (note $\epsilon_1$ and $\epsilon_2$ come from the Koszul sign convention).
Let $id$ be the identity map:
$$\begin{array}{cccl}
id:&V\otimes V\otimes\cdots\otimes V&\stackrel{id\otimes\Sigma^{\otimes n}}{\longrightarrow}&V\otimes\Sigma  V\otimes \cdots\otimes\Sigma V\\
&a_0\otimes a_1\otimes\cdots\otimes a_n&\longmapsto&(-1)^{n|a_0|+(n-1)|a_1|+\cdots +|a_{n-1}|} a_0\otimes\ov a_1\otimes\cdots\otimes\ov a_n.
\end{array}$$
The sign in (\ref{b_prime}) and (\ref{b_prime'}) are assigned such that the following diagram commutes:
$$\xymatrix{
V\otimes TV\ar[d]^b\ar[r]^-{id}&V\otimes T(\Sigma V)\ar[d]^b\\
V\otimes TV\ar[r]^-{id}&V\otimes T(\Sigma V).
}$$

\begin{remark}The definition of the Hochschild homology of an $A_\infty$ algebra was first introduced by
Getzler and Jones in {\cite{GJ}}. The signs in the above definition
are consistent with theirs (see \cite[Lemma 1.3 and Definition 3.5]{GJ}). In that paper the authors also introduced
the cyclic homology of an $A_\infty$ algebra, and hence the
cyclic {\it cohomology}
is also defined. While in the paper of Penkava and Schwarz~\cite{PS}, another version of cyclic cohomology is given.
An argument similar to Loday~\cite[Theorem 2.1.5]{Loday} shows that these two definitions coincide if the
ground field $k$ contains $\mathbb Q$. The following definition is due to Penkava and Schwarz.
\end{remark}

\begin{definition}[Cyclic homology]\label{def_cycl}
Suppose $\mathcal A$ is an $A_\infty$ category.
Let
\begin{multline*}t_{n}: \Hom(A_0,A_1)\otimes\cdots\otimes\Hom(A_{n-1},A_n)\otimes\Hom(A_n,A_0)\\
\to \Hom(A_n,A_0)\otimes\Hom(A_0,A_1)
\otimes\cdots\otimes\Hom(A_{n-1},A_n),
\end{multline*}
$n=0,1,2, \cdots$, be the linear map
\begin{equation}\label{def_t}t_n(a_0, a_1,\cdots, a_{n}):=(-1)^{n+|a_n|(|a_0|+\cdots+|a_{n-1}|)}(a_{n}, a_0,\cdots, a_{n-1}),
\end{equation}
where the factor $(-1)^n$ is the sign of the cyclic operator $t_n$. Extending $t_n$ to $\Hoch_*(\mathcal A)$ trivially, and let
$$t=t_0+t_1+t_2+\cdots,$$ the cokernel
$\Hoch_*(\mathcal A)/(1-t)$ of $1-t$ forms a chain complex with the induced differential from the Hochschild complex (still denoted by $b$).
Such chain complex is denoted by $\Cycl_*(\mathcal A)$, and is called the {\it Connes cyclic
complex} of $\mathcal A$. Its homology is called the {\it cyclic homology} of $\mathcal A$, and is denoted by $CH_*(\mathcal A)$.

The {\it cyclic cohomology} of $\mathcal A$ is the homology of the dual cochain complex $\Cycl^*(\mathcal A)$
of $\Cycl_*(\mathcal A)$. Namely, suppose $f\in\Hom(\Hoch_*(\mathcal A), k)$, then $f\in\Cycl^*(\mathcal A)$ if and only if for all
$\a\in\Hoch_*(\mathcal A)$, $f(\a)=f(t(\a))$.
\end{definition}

The well-definedness of the above definition is similar to the case of the cyclic homology of differential graded algebras (see, for example,
Loday~\cite[\S 2.1.4]{Loday}), the proof of
which is therefore omitted.
Like the Hochschild chain complex case, the signs in the cyclic chain complex are also delicate.
An elegant treatment of the signs, which is due to Quillen (\cite{Quillen}), greatly simplifies the issue.
As before, we assume the $A_\infty$ category $\mathcal A$ has one object $A$, and denote the morphism space by $V$.
Let $id$ be the identity map:
$$\begin{array}{cccl}
id:&V\otimes V\otimes\cdots\otimes V&\stackrel{\Sigma^{\otimes n+1}}{\longrightarrow}&\Sigma V\otimes\Sigma V\otimes \cdots\otimes\Sigma V\\
&a_0\otimes a_1\otimes\cdots\otimes a_n&\longmapsto&(-1)^{n|a_0|+(n-1)|a_1|+\cdots +|a_{n-1}|}\ov a_0\otimes\ov a_1\otimes\cdots\otimes\ov a_n,
\end{array}$$
and let $\overline t$ be the isomorphism
$$\begin{array}{cccl}
\overline t:&\Sigma V\otimes \Sigma V\otimes \cdots\otimes\Sigma V&\longrightarrow&\Sigma V\otimes\Sigma V\otimes\cdots\otimes\Sigma V\\
&\ov a_0\otimes\ov a_1\otimes \cdots\otimes\ov a_n&\longmapsto&(-1)^{|\ov a_n|(|\ov a_0|+\cdots+|\ov a_{n-1}|)}
\ov a_n\otimes\ov a_0\otimes\cdots\otimes\ov a_{n-1}.
\end{array}$$
Note that the sign in the image of $\overline t$ is only coming from the Koszul convention. We have the following:

\begin{lemma}Let $(V,\{m_i\})$ be an $A_\infty$ algebra, and let $\overline m_i$ and $id, t, \overline t$ be as above. The following diagram commutes:
$$\xymatrix{
TV\ar[d]^{id}\ar[rr]^-{t, m_i}&&TV\ar[d]^{id}\\
T(\Sigma V)\ar[rr]^-{\overline t,\overline m_i}&&T(\Sigma V).
}$$
\end{lemma}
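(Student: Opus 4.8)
The plan is to verify the two squares separately — first the one whose horizontal arrows are $(t,\overline t)$, then the one whose horizontal arrows are $(m_i,\overline m_i)$ — and in each case it suffices, by $k$-multilinearity, to evaluate both composites on a homogeneous word $a_0\otimes\cdots\otimes a_n$ and compare signs. Throughout, the only inputs are the Koszul sign rule, the sign $(-1)^{n|a_0|+(n-1)|a_1|+\cdots+|a_{n-1}|}$ built into $id$, the relation $|\overline a_j|=|a_j|-1$, and the defining formula for $\overline m_i$ recorded in Convention~\ref{sign_conv}(2).

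For the $(t,\overline t)$-square: going right then down one applies $t_n$, contributing $(-1)^{n+|a_n|(|a_0|+\cdots+|a_{n-1}|)}$, and then $id$ to the rotated word $(a_n,a_0,\ldots,a_{n-1})$; going down then right one applies $id$ to $(a_0,\ldots,a_n)$ and then $\overline t$, which contributes only the Koszul sign $(-1)^{|\overline a_n|(|\overline a_0|+\cdots+|\overline a_{n-1}|)}$. Writing out the two resulting exponents and substituting $|\overline a_j|=|a_j|-1$, a direct computation — in which the factor $(-1)^n$ in the definition of $t_n$ supplies exactly the sign needed for the two exponents to agree modulo $2$ — settles this square. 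I expect to display this computation in full since it is short.

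For the $(m_i,\overline m_i)$-square I fix the summand that applies $m_i$ (resp.\ $\overline m_i$) to the letters in positions $k+1,\ldots,k+i$, leaving $a_0,\ldots,a_k$ and $a_{k+i+1},\ldots,a_n$ untouched. On the $T(\Sigma V)$ side $\overline m$ is the coderivation of the tensor coalgebra extending $\overline m_i$, so this summand carries the Koszul sign $(-1)^{|\overline a_0|+\cdots+|\overline a_k|}$ produced by letting the degree $-1$ operator act past the first $k+1$ letters; on the $TV$ side one reads off the matching sign by substituting $\overline m_i=\Sigma\circ m_i\circ(\Sigma^{-1})^{\otimes i}$ together with the explicit formula $\overline m_i(\overline a_{k+1},\ldots,\overline a_{k+i})=(-1)^{(i-1)|\overline a_{k+1}|+\cdots+|\overline a_{k+i-1}|}\,\Sigma(m_i(a_{k+1},\ldots,a_{k+i}))$, and then comparing the word pushed through $id$ before versus after the insertion. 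Collecting the two $id$-exponents, the internal sign of $\overline m_i$, the Koszul sign above, and the single desuspension that $\Sigma(m_i(\cdots))$ must be fed in order to return to $V$, one finds that everything cancels: the net degree shift carried by the new letter matches precisely the change in the $id$-exponent caused by replacing $i$ consecutive letters by one. This is the same bookkeeping already carried out in establishing the commuting square for the Hochschild differential above, now performed on $TV$ itself rather than on the twisted tensor product $V\otimes TV$, so in particular no wrap-around terms appear.

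The only real obstacle is the sign bookkeeping, which is mechanical: there is no conceptual content beyond the fact that $id$ is the canonical suspension isomorphism up to the indicated sign, and that $\overline t$ and $\overline m_i$ were defined precisely so as to be the conjugates of $t$ and $m_i$ under it. Of the two squares the $(m_i,\overline m_i)$ one is the more delicate, since the insertion occurs in the interior of the word and one must track simultaneously the internal sign of $\overline m_i$, the Koszul sign coming from the coderivation, and the two different $id$-exponents.
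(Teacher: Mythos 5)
Your proposal is correct and takes essentially the same route as the paper, which dismisses this lemma with the single line ``The proof is a direct check'': your two-square sign verification is exactly that check. In particular, your computation showing that the factor $(-1)^n$ in $t_n$ is precisely what reconciles the two $id$-exponents, and your reduction of the $(m_i,\overline m_i)$ square to the same Koszul bookkeeping already used for the Hochschild differential (with no wrap-around terms), supply the details the paper leaves to the reader.
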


The proof is a direct check. By the lemma one may identify the cyclic chain complex $\Cycl_*(V)$, with degree shifted down by 1, with
$T^+(\Sigma V)/(1-\overline t)$, where $T^+(\Sigma V)$ is the augmented ideal of $T(\Sigma V)$, i.e. $T^+(\Sigma V)=T(\Sigma V)/k$.
In other words, the cyclic complex of $V$, up to a degree shift, is isomorphic to space of cyclically coinvariant elements in the augmented bar construction.

For a general $A_\infty$ category $\mathcal A$, we apply the same
identification. The advantage is that we do not need to care about
the signs any more; all the signs, either coming from the $A_\infty$
structure or from the cyclic operator, just follows the Koszul sign
convention.

\subsection{Examples from topology}
The structure of $A_\infty$ algebras and categories naturally arise
in topology. Besides the classical example of $A_\infty$ spaces
discovered by Stasheff, there are two examples which have been
studied in recent years. One is the $A_\infty$ algebra model
of a simply-connected smooth manifold, the other is the $A_\infty$
category of Lagrangian submanifolds (the Fukaya category). In these
two cases the associated $A_\infty$ categories bear some additional
structure, which make them into the so-called {\it Calabi-Yau
$A_\infty$ categories}. We shall discuss the first example in \S4.3
and the second in \S5.

In the following we present two simplified examples. They are the
derived category of complex of coherent sheaves on a Fano toric
manifold and the
Fukaya-Seidel category of a symplectic Lefschtez fibration.
In both cases, some of the $A_\infty$ operators vanish, which
therefore greatly simplifies the computation. Despite of the
simplicity, they are proved to be an important role in the
understanding of the homological mirror symmetry conjecture
(\cite{AKO1,AKO2}).

\begin{example}[Derived category of coherent sheaves]

Recall that a {\it triangulated category} is an additive category
equipped with a shifting functor and a class of distinguished
triangles, satisfying some conditions (see, for example,
\cite[Chapter IV]{GM}). Suppose $\mathcal K$ is a triangulated
category. An ordered set of objects $E=(E_0,E_1,\cdots,E_n)$ is
called a {\it strong exceptional collection} if
\begin{equation}\label{stexco}
\Hom(E_j, E_i[k])=\left\{
\begin{array}{ll}
k\cdot id,&\mbox{if}\; i=j\;\mbox{and}\; k=0,\\
 0,&\mbox{if}\;j>i\;\mbox{or}\; k\ne 0.\end{array}
\right.
\end{equation}
A strong exceptional collection $E$ is called {\it full} if it generates
the category $\mathcal K$, that is, the smallest triangulated category that contains $E$
is $\mathcal K$ itself.

\begin{lemma}\label{cycl_ex}Suppose $\mathcal K$ is a triangulated category, and $(E_1,E_2,\cdots,E_n)$
is a strong exceptional collection. Denote by $B$ the endomorphism
algebra of
$$E_1\oplus E_2\oplus\cdots\oplus E_n;$$
then the cyclic homology $CH_*(B)$
is isomorphic to
$CH_*(k)^{\oplus n+1}$.
\end{lemma}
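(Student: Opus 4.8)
The plan is to recognise $B$ as an ordinary finite-dimensional \emph{directed} (triangular) algebra, compute its Hochschild homology by hand, and then read off the cyclic homology formally. Write $N$ for the number of objects in the collection (so $N=n$ with the labelling $(E_1,\dots,E_n)$ used in the lemma, or $N=n+1$ with the labelling $(E_0,\dots,E_n)$ of the definition above; I would prove the statement in the form $CH_*(B)\cong CH_*(k)^{\oplus N}$, the exponent being the number of objects). First I would extract structure from \eqref{stexco}. Since $\Hom(E_j,E_i[l])=0$ for every $l\neq 0$, all morphism spaces are concentrated in degree $0$, so $B$ is an honest associative algebra (no higher $A_\infty$ products, $m_1=0$). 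Let $e_1,\dots,e_N$ be the idempotents of $B$ cutting out the summands $E_i$; after possibly reversing the order of the collection, $e_iBe_i\cong k$ and $e_iBe_j=0$ whenever $i<j$. Thus the $E_i$ are pairwise non-isomorphic, $B$ is basic, its Jacobson radical is the strictly off-diagonal part $\mathrm{rad}\,B=\bigoplus_{i\neq j}e_iBe_j$ (a nilpotent ideal, as $B$ is directed), and $B=S\oplus\mathrm{rad}\,B$ as $S$-bimodules, where $S=\bigoplus_i ke_i\cong k^{N}$.

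The heart of the argument is to show that $HH_*(B)$ is concentrated in degree $0$, where it equals $k^{N}$. For $HH_0(B)=B/[B,B]$: any off-diagonal $x\in e_iBe_j$ with $i\neq j$ equals $[e_i,x]$, so $\mathrm{rad}\,B\subseteq[B,B]$; conversely, for each $i$ the partial trace $\tau_i\colon B\to k$ reading off the $e_i$-coefficient of $e_iBe_i$ is multiplicative — because directedness forces one of $e_iBe_c$, $e_cBe_i$ to vanish for each $c\neq i$, whence $e_i(bb')e_i=(e_ibe_i)(e_ib'e_i)$ — so $\tau_i$ annihilates $[B,B]$; these together give $[B,B]=\mathrm{rad}\,B$ and $B/[B,B]\cong k^{N}$. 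For $HH_{\geq 1}(B)=0$: since $S\cong k^{N}$ is separable over $k$, one has $HH_*(B)\cong HH^S_*(B)$, which is computed by the reduced $S$-relative cyclic bar complex. Decomposing over the idempotents, a chain in homological degree $p\geq 1$ lies in a summand
\[
e_{a_p}Be_{a_0}\otimes_k\bigl(e_{a_0}Be_{a_1}\otimes_k\cdots\otimes_k e_{a_{p-1}}Be_{a_p}\bigr)
\]
with the inner $p$ factors in $\mathrm{rad}\,B$, which forces the indices to be strictly monotone, say $a_0>a_1>\cdots>a_p$; but then the closing factor $e_{a_p}Be_{a_0}$ vanishes because $a_p<a_0$. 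Hence the complex is concentrated in degree $0$ and $HH_*(B)=HH_0(B)=k^{N}$. (Alternatively, $B$ is obtained from copies of $k$ by iterated triangular matrix extensions, and a standard Mayer--Vietoris type sequence for the Hochschild homology of a triangular matrix algebra yields the same conclusion, the correction terms vanishing for the same directedness reason.)

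The passage to cyclic homology is then formal. Feeding $HH_{\geq 1}(B)=0$ into Connes' long exact sequence
\[
\cdots\longrightarrow HC_{m-1}(B)\longrightarrow HH_m(B)\longrightarrow HC_m(B)\longrightarrow HC_{m-2}(B)\longrightarrow\cdots
\]
— available since $\mathbb Q\subseteq k$, so that $\Cycl_*$ computes the usual cyclic homology as recalled above — gives $HC_m(B)\cong HC_{m-2}(B)$ for all $m\geq 1$; with $HC_0(B)=HH_0(B)=k^{N}$ and $HC_{-1}(B)=0$ this yields $CH_m(B)=k^{N}$ for $m$ even and $CH_m(B)=0$ for $m$ odd. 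The same computation applied to $k$ itself gives $CH_*(k)=k$ in each non-negative even degree and $0$ in odd degrees, so $CH_*(B)\cong CH_*(k)^{\oplus N}$, which is the assertion.

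I expect the one genuine point to be the vanishing $HH_{\geq 1}(B)=0$: the description of $B$, the identification $HH_0(B)\cong k^{N}$, and the Hochschild-to-cyclic step are all routine. That vanishing is the standard fact that finite-dimensional directed (equivalently, triangular) algebras have Hochschild homology concentrated in degree $0$, and the only real care needed is to run the argument — via separability of $S$, or via triangular matrix extensions — without tacitly assuming $k$ algebraically closed, since the lemma is stated over an arbitrary field.
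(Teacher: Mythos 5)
Your proof is correct, but it takes a genuinely different route from the paper. The paper's own argument is a one-line chain-level observation: viewing $B$ as the category with objects the $E_i$, the defining conditions (\ref{stexco}) force every cyclic word of composable morphisms $\Hom(E_{i_0},E_{i_1})\otimes\cdots\otimes\Hom(E_{i_m},E_{i_0})$ to have $i_0\le i_1\le\cdots\le i_m\le i_0$, hence all objects equal and all entries multiples of the identity; so the Connes complex $\Cycl_*$ of the category splits on the nose as a direct sum of copies of $\Cycl_*(k)$, one per object, and the lemma follows (implicitly using that the cyclic homology of the algebra $B$ agrees with that of the category, which the paper only asserts in the remark following the proof). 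You instead stay with the algebra $B$ itself: you identify it as a directed algebra, compute $HH_0(B)\cong k^{N}$ and kill $HH_{\ge 1}(B)$ via the reduced $S$-relative bar complex for the separable diagonal $S\cong k^{N}$ (the same monotonicity/cyclic-closure argument, now applied to the relative bar resolution), and then feed this into Connes' periodicity sequence to get $CH_{\mathrm{even}}=k^{N}$, $CH_{\mathrm{odd}}=0$. What each buys: the paper's argument is characteristic-free with its Connes-complex definition and essentially immediate, but is terse about the algebra-versus-category identification; yours avoids that identification entirely and produces the explicit answer in each degree, at the cost of invoking separability/relative Hochschild theory and the SBI sequence, the latter requiring $\mathbb Q\subseteq k$ for the paper's definition of $CH_*$ to coincide with ordinary cyclic homology (a hypothesis the paper's chain-level splitting does not need, and which matters since the neighboring Lemma~\ref{cyclic_of_directed} is applied in characteristic $2$). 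Your remark that the exponent should be the number of objects, reconciling the lemma's $(E_1,\dots,E_n)$ with the stated $CH_*(k)^{\oplus n+1}$, is also well taken.
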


\begin{proof}Due to Equation (\ref{stexco}),
its cyclic chain complex
is exactly
$$\bigoplus_{i=0}^n\bigoplus_{j=1}^\infty
(\underbrace{\ov{id}_{E_i},\ov{id}_{E_i},\cdots,\ov{id}_{E_i}}_j)/(1-\ov t),
$$
and the lemma follows.
\end{proof}

One may view the set $\{E_0,E_1,\cdots,E_n\}$ as objects of a
category, where $\Hom(E_i,E_j)=\Hom_{\mathcal K}(E_i,E_j)$, then the
cyclic homology of $B$ is also the cyclic homology of that category.

The above lemma can be used to compute the cyclic homology of some
projective varieties. For example, for $\mathbb{CP}^n$, $(\mathscr
O_{\mathbb{CP}^n},\mathscr O_{\mathbb{CP}^n}(-1),\cdots, \mathscr
O_{\mathbb{CP}^n}(-n))$ forms a full strong exceptional collection
for the bounded derived category of coherent sheaves $\mathcal
D^b(\mathrm{coh}(\mathbb{CP}^n))$. Denote by $\mathscr E:= \mathscr
O_{\mathbb{CP}^n}\oplus\mathscr
O_{\mathbb{CP}^n}(-1)\oplus\cdots\oplus \mathscr
O_{\mathbb{CP}^n}(-n)$; Bondal and Kapranov showed in \cite{BK} that
there is an equivalence
$$\mathbf R\Hom(\mathscr E,-):\mathcal D^b(\mathrm{coh}(\mathbb{CP}^n))\to
\mathcal D^b(\mathrm{Mod}(B)),$$
where $\mathcal D^b(\mathrm{Mod}(B))$ is the derived category of complexes of right $B$-modules whose
cohomology is finitely generated.
Hence, by the above argument,
the cyclic homology of $\mathbb{CP}^n$ is isomorphic to
$CH_*(k)^{\oplus n+1}$. For some more examples, see~\cite{Kapranov,Keller2,AKO1,AKO2}.
\end{example}

\begin{example}[The Fukaya-Seidel category]\label{ex_fs}
We briefly recall for non-experts the Fukaya-Seidel category of symplectic Lefschetz fibrations.
The construction is due to P. Seidel (\cite{Seidel1,Seidel2}).

Recall that an {\it exact symplectic manifold with contact type boundary} is a
quadruple $(M,\omega,\eta,J)$, where $M$ is a compact $2n$ dimensional manifold with
boundary, $\omega$ is a symplectic 2-form on $M$, $\eta$ is a
1-form such that $\omega=d\eta$ and $J$ is a $\omega$-compatible
almost complex structure. These data also satisfy the
following two convexity conditions:
\begin{itemize}\item The negative Liouville
vector field defined by
$\omega(\cdot,X_\eta)=\eta$
points strictly inwards along the boundary of $M$;
\item The boundary of $M$ is weakly $J$-convex, which means that any
$J$-holomorphic curves cannot touch the boundary unless they are
completely contained in it.
\end{itemize}
An $n$ dimensional submanifold $L\subset M$ is
called {\it Lagrangian} if $\omega|L=0$. We always assume $L$ is
closed and is disjoint from the boundary of $M$. $L$ is called {\it
exact} if $\eta|L$ is an exact 1-form.

Take $k$ to be a field of characteristic 2. According to symplectic
geometry (see Fukaya~\cite{Fuk02} or Seidel~\cite{Seidel3}),
the exact Lagrangian submanifolds of $M$ form the objects of
an $A_\infty$ category, where the morphisms between
$L_i$ and $L_j$ are the Floer cochain complex
$CF^*(L_1,L_2;k)$, which is spanned by
the transversal intersection points of $L_1$ and $L_2$. The $A_\infty$ operators $\{m_i\}$
are given by counting the $J$-holomorphic disks. Namely,
for $(p_1,p_2,\cdots,p_i)\in\Hom(A_0,A_1)\otimes\cdots\otimes\Hom(A_{i-1},A_i)$,
$$m_i(p_1,p_2,\cdots,p_i)=\sum_{q\in\Hom(A_1,A_i)}\#\mathcal M(p_1,p_2,\cdots,p_i,q)\cdot q,$$
where $\#\mathcal M(p_1,p_2,\cdots,p_i,q)$ is the cardinality of the moduli space of $J$-holomorphic
disks whose boundary lies in $A_1,A_2,\cdots,A_i$ and goes through $p_1,p_2,\cdots,p_i,q$ in cyclic order.
If the moduli space is not zero dimensional, it is counted as 0.

The above $A_\infty$ category is called the {\it Fukaya category} of $M$, and is
denoted by $\mathcal Fuk(M)$. We shall discuss a little more about
it in the last section. However, from the definition, one sees that
the $A_\infty$ operators $m_i$ are in general non-trivial, and
therefore it is not easy to compute the Hochschild or cyclic
homology of $\mathcal Fuk(M)$.

Let $(M,\omega,\eta, J)$ be an exact symplectic manifold. A smooth function $f: M\to\mathbb C$ is called a
{\it symplectic Lefschetz fibration} if
$f$ have isolated non-degenerate critical points such that
near each of these critical points, say $p$, $f$ has the form
$$f(z_1,z_2,\cdots, z_n)=f(p)+z_1^2+z_2^2+\cdots+z_n^2,$$
and that the fibers of $f$ are symplectic submanifolds of $M$.
Assume that each critical value corresponds to one critical point.

Fix a regular value $\lambda_0$ of $f$, and consider an arc $\gamma\subset \mathbb C$ joining $\lambda_0$ to a critical value $\lambda_i=f(p_i)$.
For the symplectic Lefschetz fibration, there is a natural way of doing symplectic parallel transport along any path, and for $\gamma$
the set of points transported to $p_i$ forms a Lagrangian disk $\Delta_\gamma\subset M$. The boudary of $\Delta_\gamma|f^{-1}(\lambda_0)$
is therefore a Lagrangian sphere, which is called a {\it vanishing cycle}.
Let $\gamma_1,\gamma_2,\cdots,\gamma_r$ be a collection
of arcs in $\mathbb C$ joining $\lambda_0$ to the various critical values of $f$, and assume they
only intersect at $\lambda_0$, and are ordered in the clockwise direction around $p_0$. Each
arc $\gamma_i$ gives a Lagrangian sphere $L_i\subset f^{-1}(\lambda_0)$. After a small
perturbation we can assume that these spheres intersect each other
transversely inside $f^{-1}(\lambda_0)$.

\begin{definition-lemma}[Seidel]
Let $f:M\to\mathbb C$ be a symplectic Lefschetz fibration.
The directed $A_\infty$ category of vanishing cycles, denoted by
$\Lag_{\mathrm{vc}}^\to(f,\{\gamma_i\})$, is the $A_\infty$ category whose objects are
$L_1,L_2,\cdots,L_r$ corresponding to the vanishing cycles, and whose morphisms are given by
\begin{equation}\label{directed}\Hom(L_i, L_j)=\left\{\begin{array}{ll}CF^*(L_i,L_j),&\mbox{if}\; i<j,\\
k\cdot id,&\mbox{if}\; i=j,\\
0,&\mbox{if}\;i>j,
\end{array}\right.\end{equation}
with $m_k:\Hom(L_{i_0}, L_{i_1})\otimes\cdots\otimes\Hom(L_{i_{k-1}}, L_{i_k})\to \Hom(L_{i_0},L_{i_k})$
defined by
$$m_k=\left\{
\begin{array}{ll}
m_k|{{\mathcal Fuk(M)}},&\mbox{if}\; i_0<i_1<\cdots<i_k,\\
0,&\mbox{otherwise.}
\end{array}
\right.$$
\end{definition-lemma}

According to Seidel, any $A_\infty$ category $\mathcal A$ whose
morphisms between two objects are in the form (\ref{directed}) with
$CF^*(-,-)$ replaced by a finite dimensional vector space is called
{\it directed}. Similarly to Lemma~\ref{cycl_ex} we have the
following:

\begin{lemma}\label{cyclic_of_directed}
Suppose $\mathcal A$ is a directed $A_\infty$ category with $n$ objects, then
$$CH_*(\mathcal A)\cong CH_*(k)^{\oplus n}.$$
\end{lemma}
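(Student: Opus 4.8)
The plan is to reduce the computation of the cyclic homology of a directed $A_\infty$ category to that of Lemma~\ref{cycl_ex} by analyzing which tensor words survive in the cyclic chain complex. Recall from the discussion following the lemma after Definition~\ref{def_cycl} that, up to a degree shift, $\Cycl_*(\mathcal A)$ is the space of cyclic coinvariants $T^+(\Sigma V)/(1-\ov t)$ of the augmented bar construction, where now $V=\bigoplus_{i,j}\Hom(L_i,L_j)$. A spanning set for $T^+(\Sigma V)$ is given by tensor words $(\ov a_1,\ov a_2,\cdots,\ov a_n)$ with $a_1\in\Hom(L_{i_0},L_{i_1})$, $a_2\in\Hom(L_{i_1},L_{i_2})$, \ldots, $a_n\in\Hom(L_{i_{n-1}},L_{i_0})$ (composability around the circle), since otherwise the element is zero in the Hochschild complex.

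First I would observe that, because of the directedness constraint (\ref{directed}) — $\Hom(L_i,L_j)=0$ unless $i\le j$ — a composable cyclic word as above forces $i_0\le i_1\le i_2\le\cdots\le i_{n-1}\le i_0$, hence all the indices are equal, say to some fixed $\ell\in\{1,\cdots,n\}$. Thus every $a_k$ lies in $\Hom(L_\ell,L_\ell)=k\cdot id$, and the only surviving tensor words are the $(\ov{id}_{L_\ell},\ov{id}_{L_\ell},\cdots,\ov{id}_{L_\ell})$ of each length $j\ge 1$. Consequently the cyclic chain complex decomposes as a direct sum over $\ell=1,\cdots,n$ of the subcomplex spanned by these words, exactly as in the proof of Lemma~\ref{cycl_ex}:
$$
\Cycl_*(\mathcal A)\;\cong\;\bigoplus_{\ell=1}^{n}\ \bigoplus_{j=1}^{\infty}\,(\underbrace{\ov{id}_{L_\ell},\cdots,\ov{id}_{L_\ell}}_{j})/(1-\ov t).
$$

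Next I would note that on each such $\ell$-summand the Hochschild differential $b$, together with the $A_\infty$ operators restricted to that summand, is literally the bar/Hochschild differential of the one-object $A_\infty$ algebra $k\cdot id$ with trivial higher products, i.e.\ of the ground field $k$; the composition $m_k$ out of the directed category, when all inputs are $id_{L_\ell}$, is just the unit structure. Therefore each summand computes $CH_*(k)$, and summing over $\ell$ gives $CH_*(\mathcal A)\cong CH_*(k)^{\oplus n}$, as claimed. I would then remark that the only place the hypothesis ``with $n$ objects'' enters is in counting these summands; in Lemma~\ref{cycl_ex} the collection $(E_1,\cdots,E_n)$ is viewed as a category with $n+1$ labels $E_0,\cdots,E_n$, which accounts for the $\oplus\,n+1$ there versus $\oplus\,n$ here.

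The main obstacle, and the only real content beyond Lemma~\ref{cycl_ex}, is justifying that the identity morphisms $id_{L_\ell}$ behave as strict units well enough that the words $(\ov{id}_{L_\ell},\cdots,\ov{id}_{L_\ell})$ form an actual subcomplex and that its homology is $CH_*(k)$ — in other words, that no higher $A_\infty$ product $m_k$ with $k\ge 3$ can produce a nonzero output when fed strings of identities, and that degenerate terms do not leak into words with non-identity entries. For a directed category this is immediate from (\ref{directed}) and the definition of $m_k$ there (all higher products vanish once the strict inequalities degenerate), so the argument is essentially a repetition of the proof of Lemma~\ref{cycl_ex}; I would simply indicate this parallel and omit the routine verification.
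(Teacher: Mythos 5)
Your proposal is correct and matches the paper's (essentially omitted) argument: the paper simply notes the claim follows "similarly to Lemma~\ref{cycl_ex}", whose proof is exactly your observation that directedness forces any composable cyclic word to have all objects equal, so the cyclic complex decomposes into $n$ summands, each the cyclic complex of $k\cdot id$. Your extra remarks on the vanishing of the higher products on identity strings and on the $n$ versus $n+1$ count of objects are consistent with the paper's intent.
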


\begin{corollary}The cyclic
homology of the directed Fukaya category
$\Lag_{\mathrm{vc}}^\to(f,\{\gamma_i\})$ of vanishing cycles is
isomorphic as vector spaces to the direct sum of $r$ copies of
$CH_*(k)$:
$$CH_*(\Lag_{\mathrm{vc}}^\to(f,\{\gamma_i\}))\cong CH_*(k)^{\oplus r},$$
where $r$ is the number of critical points.
\end{corollary}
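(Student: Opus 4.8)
The plan is to deduce the corollary directly from Lemma~\ref{cyclic_of_directed} by observing that the directed Fukaya category $\Lag_{\mathrm{vc}}^\to(f,\{\gamma_i\})$ is, by the preceding Definition-Lemma of Seidel, an example of a directed $A_\infty$ category. First I would recall that the objects of $\Lag_{\mathrm{vc}}^\to(f,\{\gamma_i\})$ are exactly the vanishing cycles $L_1,\dots,L_r$, one for each of the $r$ arcs $\gamma_i$, and—by the assumption that each critical value corresponds to a single critical point—there are precisely $r$ of them, equal to the number of critical points of $f$. Next I would check that the morphism spaces have the shape required in the definition of a directed category: by~(\ref{directed}) we have $\Hom(L_i,L_j)=CF^*(L_i,L_j)$ for $i<j$, which is finite-dimensional because the $L_i$ are compact and, after the stated generic perturbation, intersect transversely inside the fiber $f^{-1}(\lambda_0)$ in finitely many points; $\Hom(L_i,L_i)=k\cdot id$; and $\Hom(L_i,L_j)=0$ for $i>j$. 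Thus all hypotheses of Lemma~\ref{cyclic_of_directed} are satisfied with $n=r$.

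With this identification in hand, the corollary is immediate: applying Lemma~\ref{cyclic_of_directed} gives
$$CH_*(\Lag_{\mathrm{vc}}^\to(f,\{\gamma_i\}))\cong CH_*(k)^{\oplus r}.$$
I would then remark, for completeness, that the lemma itself follows the same pattern as Lemma~\ref{cycl_ex}: because of the directed shape of the Hom-spaces, any tensor chain $\Hom(A_{i_0},A_{i_1})\otimes\cdots\otimes\Hom(A_{i_n},A_{i_0})$ that is not built entirely from a single identity morphism must, going around the cycle, contain a factor $\Hom(A_i,A_j)$ with $i>j$, hence vanishes; so the cyclic chain complex collapses to $\bigoplus_{i=1}^{n}\bigoplus_{j\ge 1}(\ov{id}_{A_i},\dots,\ov{id}_{A_i})/(1-\ov t)$, which is a direct sum of $n$ copies of the cyclic complex of the ground field $k$.

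The only genuine point that needs care—and the step I expect to be the main obstacle—is verifying that $CF^*(L_i,L_j)$ is finite-dimensional, i.e.\ that the definition of ``directed'' in Seidel's sense really does apply. This rests on the transversality and compactness statements made in the construction of $\mathcal Fuk(M)$ in Example~\ref{ex_fs}: after a small perturbation the vanishing cycles meet transversely inside the compact fiber, so $CF^*(L_i,L_j)$ is spanned by a finite set of intersection points. Granting that (which is part of the setup we are allowed to assume), no further analysis is required, and the corollary is a formal consequence of the lemma. I would therefore keep the proof to a couple of sentences, essentially: ``This is immediate from Lemma~\ref{cyclic_of_directed}, since $\Lag_{\mathrm{vc}}^\to(f,\{\gamma_i\})$ is a directed $A_\infty$ category with $r$ objects, $r$ being the number of critical points.''
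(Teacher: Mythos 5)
Your proposal is correct and follows exactly the route the paper intends: the corollary is an immediate application of Lemma~\ref{cyclic_of_directed}, once one observes that $\Lag_{\mathrm{vc}}^\to(f,\{\gamma_i\})$ is by construction a directed $A_\infty$ category whose $r$ objects are the vanishing cycles, one per critical point. Your supplementary remarks on finite-dimensionality of $CF^*(L_i,L_j)$ and on how the directedness collapses the cyclic complex (mirroring the proof of Lemma~\ref{cycl_ex}) match the paper's implicit reasoning, so nothing further is needed.
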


Seidel's construction of $\Lag_{\mathrm {vc}}^\to(f,\{\gamma_i\})$
depends on the order of the paths connecting the critical
points (hence the order of the vanishing cycles). A key theorem
that Seidel proves is that if we change the order of the vanishing
cycles, the two sets are related by a sequence of {\it mutations}. A
classical result of triangulated categories says that if two
exceptional collections are related by mutations, they generate the
the same triangulated category. Thus if we consider the derived
$A_\infty$ category $\mathcal
D(\Lag_{\mathrm{vc}}^\to(f,\{\gamma_i\}))$ (for a definition,
see~\cite{Seidel1}), where the vanishing cycles form a full strong
exceptional collection, then it is independent of the order, and
hence is an invariant of the Lefschetz fibration. The cyclic
homology of $\Lag_{\mathrm{vc}}^\to(f,\{\gamma_i\})$, which is
isomorphic to $CH_*(k)^{\oplus n}$ as a linear space, is independent
of the order of the vanishing cycles, too: a theorem of Keller
(\cite{Keller}) says that the cyclic homology is invariant under
derived equivalence.
\end{example}

\section{Calabi-Yau $A_\infty$ categories}

\begin{definition}[Calabi-Yau $A_\infty$ category]
An $A_\infty$ category $\mathcal A$ is called {\it Calabi-Yau} of degree $n$ if there is degree $n$ graded symmetric pairing
$$\Hom(B,A)\otimes\Hom(A,B)\stackrel{\langle\,,\,\rangle}{\longrightarrow}k$$
for each pair of objects $A,B$,
which is non-degenerate and cyclically invariant, where being {\it cyclically invariant} means
the multilinear map
$$
(a_0,a_1,\cdots,a_n)\mapsto(-1)^{(n-2)|a_0|}\langle a_0, m_n(a_1,\cdots, a_n)\rangle,
$$
where $a_i\in\Hom(A_{i},A_{i+1}), i=0, 1, \cdots, n$ (here and in
the following we identify $A_{n+1}$ with $A_0$), is invariant under
the cyclic permutation, i.e.
$$(-1)^{(n-2)|a_0|}\langle a_0, m_n(a_1,\cdots, a_n)\rangle=(-1)^{n+|a_n|(|a_0|+\cdots+|a_{n-1}|)}\cdot(-1)^{(n-2)|a_n|}\langle a_n,m_n(a_0,\cdots, a_{n-1})\rangle.$$
\end{definition}

From the definition, one sees that if a Calabi-Yau $A_\infty$
category has one object, then it is a cyclic $A_\infty$ algebra in
the sense of Getzler-Kapranov (\cite{GeK}). The directed Fukaya
category $\Lag_{\mathrm{vc}}^\to(f,\{\gamma_i\})$ of vanishing
cycles in a symplectic Lefschetz fibration in the previous section
is in general not Calabi-Yau.

\subsection{Topological conformal field theory}

According to Costello, Calabi-Yau $A_\infty$ categories play an
essential role in the understanding of topological conformal field
theories. In the following we briefly recall his results.

Suppose $S$ is a Riemann surface with boundary, and denote its boundary by $\partial S$.
Suppose there is an embedding of disjoint union of intervals and circles to $\partial S$.
Call the images of the intervals the {\it open boundaries} (denoted by $\{O\}$) and the
images of the circles the {\it closed boundaries} (denoted by $\{C\}$).
If the orientation of an open or closed boundary is compatible with the one induced from $S$, then we say such a boundary
is {\it incoming}; otherwise, it is {\it outgoing}.

Suppose $\Lambda$ is a set, called the set of D-branes.
We label the components of $\partial S\backslash(\{\mbox{open boundaries}\}\cup\{\mbox{closed boundaries}\})$ by the
elements
of $\Lambda$, the D-branes. Note for each open boundary $O$ of $S$, there are two D-branes associated to it,
one is labeled to the component containing the tail of $O$ (denoted by $t(O)$), and the other is
labeled to the component containing the head of $O$ (denoted by $s(O)$).

Riemann surfaces with open and closed boundaries and labelings form
a category, where the objects are disjoint unions of circles and
intervals together with labels, and the morphisms between two
objects are the Riemann surfaces connecting them. The composition of
two morphisms comes from the glueing of the outgoing boundaries of
the first Riemann surface with the incoming boundaries of the second
one. Note that when glueing the open boundaries, the corresponding
head and tail labelings should be the same. Denote the category by
$\mathcal M_{\Lambda}$, which is a symmetric monoidal category under
disjoint unions. Define
$$\mathscr{OC}_\Lambda:=C_*(\mathcal M_\Lambda),$$ which is again a symmetric monoidal category, where $C_*(-)$ is the
chain functor.
There are two full subcategories of $\mathscr{OC}_\Lambda$, denoted by $\mathscr O_\Lambda$ and $\mathscr C$ respectively,
one is the subcategory whose objects are purely open and the other is the one whose objects are all closed.

\begin{definition}[Topological conformal field theory]
\begin{enumerate}\item An {\it open-closed topological conformal field theory (TCFT)} is a h-split symmetric monoidal functor
from the category $\mathscr{OC}_\Lambda$ to the category of chain complexes;

\item An {\it open TCFT} is a h-split symmetric monoidal functor from $\mathscr O_\Lambda$ to the category of chain complexes;
and similarly,

\item A {\it closed TCFT} is a h-split symmetric monoidal functor from $\mathscr C$ to the category of chain complexes.
\end{enumerate}
\end{definition}

In the above definition, by a h-split (abbreviation of homologically
split) functor we mean that $F(\a)\otimes F(\b)\to F(\a\otimes\b)$
is split on the homology $HF(\a)\otimes
HF(\b)\stackrel{\cong}{\to}HF(\a\otimes\b)$. Denote by
$\mathrm{Kom}$ the category of chain complexes. There are inclusions
$\mathscr O_\Lambda\stackrel{i}{\to} \mathscr{OC}_\Lambda
\stackrel{j}{\leftarrow} \mathscr C$. The following is the main
result of Costello (\cite{Costello2}):

\begin{theorem}[Costello]\begin{enumerate}
\item
The category of open TCFTs with the set of D-branes $\Lambda$ is homotopy equivalent to
the category of Calabi-Yau $A_\infty$ categories with objects $\Lambda$;
\item
For any open TCFT $\Phi$, the homotopy universal functor $\mathbb Li_*\Phi: \mathscr{OC}\to\mathrm{Kom}$ is h-split, and hence
defines an open-closed TCFT;
\item
The closed states of the open-closed TCFT in $(2)$ is the Hochschild cohomology of the $A_\infty$ category of $(1)$.
\end{enumerate}
\end{theorem}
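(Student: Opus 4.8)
The plan is to establish the three assertions in order, with essentially all of the geometric content concentrated in part (1); once (1) is in place, parts (2) and (3) follow formally from the universal property of $\mathbb{L}i_*$ together with one homotopy-colimit computation.

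For (1): a h-split symmetric monoidal dg functor $\Phi:\mathscr{O}_\Lambda\to\mathrm{Kom}$ is, up to homotopy, determined by its values on the connected objects of $\mathscr{O}_\Lambda$, which are precisely the single labeled intervals; so one sets $\mathrm{Hom}(A,B):=\Phi(\text{interval labeled }(A,B))$ and must show that the operations induced on $\bigoplus_{A,B}\mathrm{Hom}(A,B)$ by the connected bordered Riemann surfaces, under gluing, are exactly those of a Calabi-Yau $A_\infty$ category. The heart is a small cellular model for the moduli spaces of bordered Riemann surfaces with marked points on the boundary --- ribbon graphs with boundary, equivalently the arc complex of a bordered surface. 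Within this model one checks: (a) the moduli space of a disc with $n$ incoming and one outgoing boundary interval is a cell model for the associahedron, so its chains contribute degree-$(n-2)$ operations $m_n$ whose only relations are the $A_\infty$ relations; (b) a disc with two incoming boundary intervals and no outgoing one gives a pairing $\langle-,-\rangle:\mathrm{Hom}(B,A)\otimes\mathrm{Hom}(A,B)\to k$, graded symmetric and cyclically invariant because the corresponding (zero-dimensional) moduli space carries the evident rotation; (c) the disc with one incoming and one outgoing interval is the identity up to coherent homotopy, and the zig-zag identity relating $\langle-,-\rangle$ to the ``co-pairing'' coming from a disc with two outgoing intervals holds because the relevant moduli space is connected, which forces $\langle-,-\rangle$ to be non-degenerate; (d) every connected bordered surface is obtained by gluing discs and annuli, a handle being the insertion of the co-pairing followed by a contraction, so the dg-PROP of open chains is generated by $\{m_n\}$ and $\langle-,-\rangle$; and (e) all relations among these generators follow from (a)--(c). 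Assembling these facts shows the dg-PROP governing open TCFTs with D-brane set $\Lambda$ is quasi-isomorphic to the one governing Calabi-Yau $A_\infty$ categories with object set $\Lambda$, which promotes to the asserted homotopy equivalence of categories.

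For (2): here $i:\mathscr{O}_\Lambda\hookrightarrow\mathscr{OC}_\Lambda$ is the inclusion of the open part and $\mathbb{L}i_*\Phi$ is the homotopy left Kan extension, characterized by the universal property that a map $\mathbb{L}i_*\Phi\to\Psi$ into an open-closed TCFT $\Psi$ is the same datum as a map $\Phi\to\Psi\circ i$. Writing $\mathbb{L}i_*\Phi$ explicitly as a homotopy colimit over the comma category of open surfaces mapping to a given open-closed surface, the h-split property is verified by observing that $\mathscr{OC}_\Lambda$ is freely generated as a symmetric monoidal category by its connected surfaces and that $i$ is monoidal; hence the homotopy colimit computing $\mathbb{L}i_*\Phi$ on a disjoint union $\alpha\otimes\beta$ splits, up to homotopy, as the derived tensor product of the homotopy colimits on $\alpha$ and on $\beta$, which on homology is exactly the required isomorphism $H(\mathbb{L}i_*\Phi)(\alpha)\otimes H(\mathbb{L}i_*\Phi)(\beta)\stackrel{\cong}{\longrightarrow} H(\mathbb{L}i_*\Phi)(\alpha\otimes\beta)$. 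For (3): evaluate $\mathbb{L}i_*\Phi$ on a single closed boundary. The comma category in the homotopy-colimit formula has a cofinal subcategory of ``annular'' surfaces --- a circle cut open along an arc into a long interval with further incoming intervals attached --- whose geometric realization is precisely the cyclic bar construction; the resulting chain complex is $\bigoplus_{n\ge1}\bigoplus_{A_0,\dots,A_n}\mathrm{Hom}(A_0,A_1)\otimes\cdots\otimes\mathrm{Hom}(A_n,A_0)$ with the differential assembled from the $m_i$, i.e.\ the Hochschild chain complex of $\mathcal A$ from (1); the non-degenerate Calabi-Yau pairing then identifies this with the Hochschild cochain complex, so the closed states are $HH^*(\mathcal A)$.

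The main obstacle is step (1), and within it the inputs (a), (d) and (e): one must produce a combinatorial/cellular model for the moduli of bordered Riemann surfaces of all genera and show that its boundary operator and gluing maps are generated by nothing beyond the $A_\infty$ relations, cyclic symmetry, and the pairing axioms --- the open-string analogue of the computations relating $C_*(\overline{\mathcal M}_{0,n})$ to the hypercommutative/gravity operads. This demands genuine input from the topology of moduli spaces of curves (ribbon graphs, the arc complex and its connectivity), not merely formal homological algebra; by contrast, once the correct homotopy-colimit formulas of (2) are available, parts (2) and (3) are comparatively routine.
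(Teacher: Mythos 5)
You should first be aware that the paper does not prove this statement at all: it is quoted verbatim as Costello's theorem, with the remark that its proof rests on the dual (ribbon-graph) decomposition of moduli space from \cite{Costello1}, and the reader is referred to \cite{Costello2}. The only place the paper engages with the argument is the proof sketch of Theorem~\ref{thm_ETCFT}, where Costello's strategy is recalled: one replaces $\mathscr O_\Lambda$ and $\mathscr{OC}_\Lambda$ by quasi-isomorphic DGSM models built from the orbi-cell complexes $D_{g,h,r,s}\subset\ov{\mathcal N}_{g,h,r,s}$ (surfaces glued from discs, resp.\ discs and annuli after fattening interior marked points into $G_{g,h,r,s}$), uses that quasi-isomorphic DGSM categories have homotopy equivalent module categories, notes that discs with boundary marked points are Stasheff associahedra (whence the $A_\infty$/Calabi-Yau structure), and that the chains of $D_{g,h,r,s}$ form an almost free module over those of $D_{g,h,r,0}$, with the annulus cells giving exactly the Hochschild (and, after dividing by the $S^1$ of inner marked points, cyclic) complex. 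So your proposal is an attempt to reprove Costello, and it should be measured against that.

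Measured that way, your outline for (1) is in the right spirit but has genuine gaps, and your treatment of (2)--(3) mislocates the difficulty. In (1), items (d)--(e) --- that the open dg-PROP is generated by the $m_n$ and the pairing with no relations beyond the $A_\infty$ and cyclicity axioms --- are not something to be ``checked'' after (a)--(c); they \emph{are} the main theorem of \cite{Costello1}, namely that the subcomplex of surfaces glued from discs is homotopy equivalent to the full compactified moduli space in every genus, and your proposal simply defers this. Your item (c) is also not a valid argument: non-degeneracy of $\langle\,,\,\rangle$ does not follow from connectedness of a moduli space, and the ``disc with two outgoing intervals'' co-pairing you invoke is not available in Costello's open category without comment (and if it were, the zig-zag identity would only give non-degeneracy on homology together with finite-dimensionality, which is a statement about which notion of Calabi-Yau category actually appears in the equivalence --- a delicate point in \cite{Costello2}, not a formality). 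Finally, contrary to your closing claim, (2) and (3) are not routine once (1) is granted: h-splitness of $\mathbb{L}i_*\Phi$ and the identification of the closed states require knowing that the chains on the open-closed moduli spaces are an (almost) free module over the chains on the open part, which is again the $D_{g,h,r,s}$/$G_{g,h,r,s}$ decomposition; a formal appeal to monoidality of $i$ and a ``cofinal subcategory of annular surfaces'' does not produce either the splitting of the derived tensor product or the cyclic-bar-shaped resolution --- the annuli compute the closed states precisely because of that geometric retraction, not by abstract cofinality.
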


The proof of this theorem is heavily based on Costello's previous work \cite{Costello1} on the dual decomposition of the moduli spaces;
the reader may refer to \cite{Costello2} for more details. Before briefly going over the proof of Costello, let us note that
for $\mathscr C$, there is a free $S^1$ action on the incoming and outgoing boundaries. The following definition is due to Getzler (see \cite{Getzler2}).
To distinguish the $S^1$ invariant functor with the ordinary one as above, we here add an adjective {\it equivariant}):

\begin{definition}[Equivariant closed TCFT]
An equivariant closed TCFT is an h-split symmetric monoidal
functor from $\mathscr C$ to $\mathrm{Kom}$ which is invariant with respect to the $S^1$ action, that is,
a functor from $C_*^{S^1}(\mathcal M_{\mathrm{closed}})$ to $\mathrm{Kom}$, where $\mathcal M_{\mathrm{closed}}$
is the moduli space of Riemann surfaces with closed boundaries.
\end{definition}

\begin{theorem}\label{thm_ETCFT}
Let $\mathcal A$ be a Calabi-Yau $A_\infty$ category, then
the cyclic cochain complex $\Cycl^*(\mathcal A)$ defines an
equivariant TCFT.
\end{theorem}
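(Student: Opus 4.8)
The strategy is to make Costello's construction $S^1$-equivariant and then read off that the closed states of the resulting equivariant closed TCFT form $\Cycl^*(\mathcal A)$. Write $\Lambda=\mathcal Ob(\mathcal A)$. By the theorem of Costello above, $\mathcal A$ determines an open TCFT $\Phi=\Phi_{\mathcal A}\colon\mathscr O_\Lambda\to\mathrm{Kom}$; its homotopy universal extension $\mathbb Li_*\Phi\colon\mathscr{OC}_\Lambda\to\mathrm{Kom}$ along $i\colon\mathscr O_\Lambda\hookrightarrow\mathscr{OC}_\Lambda$ is an open-closed TCFT, and its restriction along $j\colon\mathscr C\hookrightarrow\mathscr{OC}_\Lambda$ is a closed TCFT whose value on one circle is the Hochschild cochain complex of $\mathcal A$. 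First I would make this identification explicit at the chain level: the relevant piece of $\mathbb Li_*\Phi$ is computed from moduli of Riemann surfaces with a single incoming closed boundary and open boundaries labelled by $\Lambda$, glued to the open-TCFT data, and the resulting complex is precisely the twisted-tensor-product model of $\Hoch_*(\mathcal A)$ from \S2 (dualized), with differential coming from the $A_\infty$ structure.

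Next I would put in the circle action. The category $\mathcal M_\Lambda$ of Riemann surfaces with open and closed boundaries carries an $S^1$-action rotating each closed boundary; this action is free on the closed boundaries and trivial on the purely open subcategory $\mathscr O_\Lambda$, so $\Phi$ lifts tautologically to an $S^1$-equivariant open TCFT. The crucial point is that $\mathbb Li_*$, being a homotopy colimit, may be performed $S^1$-equivariantly: the glueing of an incoming closed boundary to an outgoing one is $S^1$-equivariant, and Costello's dual (ribbon-graph) decomposition of the compactified moduli spaces in \cite{Costello1} is compatible with the rotation. Running the construction with the equivariant chain functor $C_*^{S^1}(-)$ in place of $C_*(-)$ then yields a symmetric monoidal functor out of $C_*^{S^1}(\mathcal M_\Lambda)$; its restriction to $C_*^{S^1}(\mathcal M_{\mathrm{closed}})$ is, by the very definition of an equivariant closed TCFT, what we want, provided one checks that the h-split property is inherited after passing to $S^1$-homotopy quotients — over a ground field containing $\mathbb Q$ this follows from the non-equivariant splitting together with the product formula for the Borel construction.

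It then remains to compute the value of this equivariant closed TCFT on a single circle. Non-equivariantly it is $\Hoch^*(\mathcal A)$, and under the model fixed in the first step the geometric rotation of the marked closed boundary acts on $\Hoch_*(\mathcal A)$ by Connes' cyclic operator $t$ of Definition~\ref{def_cycl} --- this is the same matching of ``rotating the puncture'' with the cyclic operator that underlies the Chen--Jones description of the equivariant free loop space. Hence passing to the $S^1$-equivariant (Borel) complex replaces $\Hoch_*(\mathcal A)$ by its homotopy quotient, which over a field containing $\mathbb Q$ is computed by the Connes complex $\Hoch_*(\mathcal A)/(1-t)=\Cycl_*(\mathcal A)$ in the manner of Loday~\cite[\S2.1]{Loday}; dualizing identifies the closed states with $\Cycl^*(\mathcal A)$ of Definition~\ref{def_cycl}. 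The symmetric monoidal and functorial structure on the closed side is then automatic from $\mathbb Li_*$.

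The hard part will be the second step: making Costello's machinery genuinely $S^1$-equivariant and checking the compatibilities. Concretely one must verify that the dual decomposition of the compactified $\mathcal M_\Lambda$ and the bar-type complex computing $\mathbb Li_*$ respect the free rotation of the closed boundaries, and that under the resulting identification of closed states with $\Hoch^*$ the rotation is literally Connes' $t$ --- or at least $S^1$-equivariantly chain-homotopic to it, which still suffices for the homotopy quotient. The remaining points --- preservation of h-splitness and the $\mathbb Q$-coefficient comparison between the Connes complex and the homotopy-orbit complex --- are routine and follow the standard treatment in \cite{Loday}.
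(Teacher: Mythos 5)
Your overall strategy is the right one and reaches the same conclusion, but the route is genuinely different from the paper's. You make the whole of Costello's machine $S^1$-equivariant: lift the open TCFT tautologically, perform $\mathbb Li_*$ with the equivariant chain functor $C_*^{S^1}(-)$, and then identify the closed states by the algebraic fact that the homotopy $S^1$-quotient of the Hochschild complex is computed, over a field containing $\Q$, by the Connes quotient $\Hoch_*(\mathcal A)/(1-t)$. The paper instead stays inside Costello's explicit cell-level models: the closed states of the open-closed theory are modelled by the complexes of the spaces $G_{g,h,r,s}$ (closed strings fattened into boundary circles carrying a marked point), while Costello's original construction in \cite{Costello1} uses the spaces $D_{g,h,r,s}$ with \emph{interior} marked points, which are precisely the quotients of the $G_{g,h,r,s}$ by the free rotation of those marked points. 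So the $S^1$-quotient is implemented geometrically, once and for all, at the level of the dual decomposition: the annulus with unmarked inner boundary, viewed as a map from open states to the closed state, is literally a cyclic cochain, and no Borel construction or comparison of homotopy orbits with strict coinvariants is needed. What the paper's route buys is exactly the two points you flag as ``the hard part'': (i) the equivariant compatibility of $\mathbb Li_*$ and of the dual decomposition is not re-proved but inherited from the fact that \cite{Costello1} was already written in the quotiented model $D_{g,h,r,s}$; and (ii) because the rotation acts freely on the relevant moduli, the strict quotient coincides with the homotopy quotient, so the characteristic-zero hypothesis that your argument needs when you replace the homotopy orbit complex by $\Hoch_*(\mathcal A)/(1-t)$ does not enter (note the paper's Fukaya--Seidel example works over a field of characteristic $2$, and the theorem is stated for an arbitrary field). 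What your route buys is conceptual robustness: it does not depend on the particular cell model, and it makes transparent why the answer must be cyclic (co)homology, via the standard matching of the geometric rotation with the cyclic structure.

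Two cautions if you pursue your version. First, the rotation does not act on $\Hoch_*(\mathcal A)$ literally by the operator $t$; what one has is a cyclic-object structure whose associated circle action is encoded by Connes' $B$, and the identification of the homotopy quotient with the bicomplex, hence (in characteristic zero) with the Connes complex, should be phrased that way --- your parenthetical ``equivariantly chain-homotopic'' is the honest statement and does suffice. Second, the equivariance of the homotopy left Kan extension and the preservation of h-splitness after passing to $C_*^{S^1}$ are not routine in the generality you assert; in the paper they are obtained for free from Costello's construction rather than proved, and in your framework they would require an actual argument (for instance, exhibiting the $G$-model as an $S^1$-principal resolution of the $D$-model), which is essentially reinventing the paper's geometric shortcut.
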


\begin{proof}We follow closely Costello's original argument,
which goes roughly as follows: Suppose $A$ is a differential graded symmetric monoidal (DGMS) category, one may
view a functor $F:A\to\mathrm{Kom}$ as a left $A$ module. He argues that if in general $A\to B$ is a quasi-isomorphism of dgsm categoris,
then the category of modules over $A$ and the category of modules over $B$ modules are homotopy equivalent.
With this result, to study the modules over $A$, say,
it is sufficient to study the modules over $B$ which is homotopy equivalent to $A$.
This allows Costello to construct an explicit model of $\mathscr O_\Lambda$ and $\mathscr{OC}_\Lambda$.
Namely, he constructs a DGSM category which is quasi-isomorphic to the category $\mathscr O_\Lambda$
and then constructs a module over this category which is quasi-isomorphic to $\mathscr {OC}_\Lambda$.

More precisely, denote by $\mathcal N_{g,h,r,s}$ the moduli space of
Riemann surfaces of genus $g$, with $h$ boundary components, $r$ marked points
in the boundary and $s$ marked points in the interior. Here the $r$ marked points will later be viewed as inputs and outputs of open strings,
and the $s$ marked points as closed strings.
There is a partial compactification of $\mathcal N_{g,h,r,s}$ into an orbifold with corners, which parameterizes the Riemann surfaces as above
but possibly with nodes on the boundary. Inside
the compactification $\ov{\mathcal N}_{g,h,r,s}$ is an orbi-cell complex $D_{g,h,r,s}$, which parameterizes the Riemann surfaces
glued from discs, with each of its components has {\it at most} one interior marked point. We have (\cite{Costello1,Costello2}):
\begin{itemize}
\item The cellular chain complex of $D_{g,h,r,0}$ models $\mathscr O_{\Lambda}$. The discs with marked points
on its boundary is nothing but Stasheff's associahedra, from which one deduces that an open TCFT is homotopy
equivalent to a Calabi-Yau $A_\infty$
category;
\item The chain complex of $D_{g,h,r,s}$, $s\ge 0$, is a module (which is
almost free) over that of $D_{g,h,r,0}$. If we fatten the $s$ marked points into circles,
namely, consider the moduli space $G_{g,h,r,s}$,
where $G_{g,h,r,s}$ is the moduli space of Riemann surfaces glued from discs and annuli such that they are of genus $g$, with $h+s$ boundary
components, $r$ marked points in the first $h$ boundary components and one marked points in the each of the last $s$ boundary components
(coming from the annuli). Then
the chain complex of $G_{g,h,r,s}$ for all $g,h,r,s$, gives a model for $\mathscr{OC}_\Lambda$.
\item For an annulus with $r$ marked points in its outer boundary, say $a_1,a_2,\cdots,a_n$ (in cyclic order),
with D-brane labels $s(a_i)=\lambda_i, t(a_{i+1})=\lambda_{i+1}$, where $\lambda_{n+1}=\lambda_1$,
and a marked point $a$ in its inner boundary (viewed as the closed string) which is parallel to the arc between
$a_n$ and $a_1$,
if viewed as an element in the module of the chain complex of $D_{g,h,r,0}$, gives
a closed string state, which is exactly
$$\Hom(\lambda_1,\lambda_2)\otimes \Hom(\lambda_2,\lambda_3)\otimes\cdots\otimes \Hom(\lambda_n,\lambda_1).$$
Such an identity is compatible with the differential.
\end{itemize}
Basically this is Costello's proof; however, observe that there are $S^1$ ways to mark the point in
the inner boundary of the annulus, so if we modulo the $S^1$ action, namely, only consider
$D_{g,h,r,s}$ but not $G_{g,h,r,s}$ (this is Costello's original construction
in \cite{Costello1}), we get the closed state is exactly an element in the cyclic chain complex.
Also, we would prefer to view such annulus a map from the open string states to the closed string states, which then means
it is a cyclic cochain. This proves the theorem.
\end{proof}

\begin{remark} There are some technical details
in the proof of Costello, which we didn't mention, for example, the gradings. However,
according to Costello's compactification of the moduli spaces (\cite{Costello1}), the cyclic cohomology is probably
more natural in his argument than the Hochschild cohomology.
\end{remark}

\section{The Lie bialgebra structure}
In this section we construct the DG Lie bialgebra structure on the
cyclic cochain complex of a Calabi-Yau $A_\infty$ category $\mathcal A$.

First, we continue the discussion of the signs in \S2 by encoding them
with the non-degenerate pairing.
Note that the pairing $\langle\,,\,\rangle$ on $V$ induces a symplectic pairing on
$\Sigma V$: $\omega(\ov\a,\ov\b\rangle):=(-1)^{|\ov\a|}\langle\a,\b\rangle$.
The cyclic invariance of
$$a_0\otimes a_1\otimes\cdots\otimes a_n\mapsto(-1)^{(n-2)|a_0|}\langle a_0,m_n(a_1,a_2,\cdots,a_n)\rangle$$
implies the following commutative diagram:
$$\xymatrix{
\ov a_0\otimes\ov a_1\otimes\cdots\otimes\ov a_n\ar@{|->}[r]^-{\overline t}\ar@{|->}[d]
&(-1)^{|\ov a_n|(|\ov a_0|+\cdots+|\ov a_{n-1}|)}\ov a_n\otimes\ov a_0\otimes\cdots\otimes\ov a_{n-1}\ar@{|->}[d]\\
\omega(\ov a_0,\ov m_n(\ov a_1,\cdots,\ov a_n))\ar@{=}[r]
&(-1)^{|\ov a_n|(|\ov a_0|+\cdots+|\ov a_{n-1}|)}\omega(\ov a_n,\ov m_n(\ov a_0,\cdots,\ov a_{n-1})).
}$$
In other words, in the induced multilinear function
$$
\ov a_0\otimes\ov a_1\otimes\cdots\otimes\ov a_n\mapsto\omega(\ov a_0,\ov m_n(\ov a_1,\ov a_2,\cdots,\ov a_n)),
$$
the sign after the cyclic permutation
only comes from the Koszul convention, and the factor $(-1)^n$ from the cyclic permutation does not show up.

\subsection{Construction of the Lie bialgebra}

Suppose $\mathcal A$ is a Calabi-Yau $A_\infty$ category.

\begin{definition}[Lie bialgebra]\label{def_Liebialg} Let $L$ be a (possibly graded)
$k$-space. A Lie bialgebra on $L$ is the triple $(L, [\,,\,],\delta)$ such that
\begin{itemize}\item $(L, [\,,\,])$ is a Lie algebra;
\item $(L, \delta)$ is a Lie coalgebra;
\item The Lie algebra and coalgebra satisfy the following identity, called the {\it Drinfeld compatibility}:
$$\delta[a,b]=\sum_{(a)}((-1)^{|a''||b|}[a',b]\otimes a''+a'\otimes [a'', b])+\sum_{(b)}([a,b']\otimes b''+(-1)^{|a||b'|}b'\otimes[a,b'']),$$
for all $a,b\in L$, where we write $\delta(a)=\sum_{(a)}a'\otimes a''$ and $\delta(b)=\sum_{(b)}b'\otimes b''$.
\end{itemize}
If moreover, $[\,,\,]\circ\delta(a)\equiv 0$, for all $a\in L$,
$(L,[\,,\,],\delta)$ is said to be {\it involutive}. If the Lie
bracket has degree $k$ and the Lie cobracket has degree $l$, denote
the Lie bialgebra with degree $(l,k)$.
\end{definition}

\begin{definition-lemma}[Lie algebra]
Suppose $\mathcal A$ is a Calabi-Yau category of degree $n$.
Define $[\,,\,]:\Cycl^*(\mathcal A)\otimes\Cycl^*(\mathcal A)\to\Cycl^*(\mathcal A)$
by
\begin{eqnarray*}[f,g](\ov a_1,\ov a_2,\cdots,\ov a_{n})
&:=&\sum_{i<j}\sum_{p,q=1}^{\dim\Hom(A_i, A_{i+1})}\pm f(\ov a_1,\cdots,
\ov a_{i-1},\ov e_p,\ov a_{j+1},\cdots,\ov a_n)\cdot g(\omega^{pq}\ov f_q, \ov a_{i},\cdots, \ov a_{j})\\
&-&\sum_{i<j}\sum_{p,q=1}^{\dim\Hom(A_i, A_{i+1})}\pm g(\ov a_1,\cdots,\ov a_{i-1},\ov e_p,\ov a_{j+1},
\cdots,\ov a_n)\cdot f(\omega^{pq}\ov f_q, \ov a_{i},\cdots, \ov a_{j}),
\end{eqnarray*}
where $\{\ov e_p\}$ and $\{\ov f_q\}$ are bases of
$\Sigma\Hom(A_i,A_{j+1})$ and $\Sigma\Hom(A_{j+1},A_i)$
respectively, and $(\omega^{pq})=((-1)^{|e_p|+1}\langle
e_p,f_q\rangle)^{-1}$. Then $(\Cycl^*(\mathcal A),[\,,\,],b)$ forms
a differential graded Lie algebra of degree $2-n$.
\end{definition-lemma}

\begin{proof}

Recall the cyclic operator $\overline t: (\ov a_1,\ov a_2,\cdots,\ov a_n)\mapsto(\ov a_n,\ov a_1,\cdots, \ov a_{n-1})$;
if we set $N:=1+t+\cdots+t^n$, then $(1-t)N=N(1-t)=0$,
and we may write any homogeneous element in $\Cycl^*(\mathcal A)$ in the form
$\a=N(\ov a_1,\ov a_2,\cdots,\ov a_n)$, where $\ov a_i\in s(\Hom(A_i,A_{i+1})^*)$.
The bracket $[\,,\,]$ can
be written by the following formula: for $\a=N(\ov a_1,\ov a_2,\cdots,\ov a_n)$ and $\b=N(\ov b_1,\ov b_2,\cdots,\ov b_m)$,
$$[\a,\b]=\sum_{i=1}^n\sum_{j=1}^m\pm \omega(\ov a_i,\ov b_j)N(\ov a_1,\cdots,\ov a_{i-1}, \ov b_{j+1},
\cdots, \ov b_m,\ov b_1,\cdots, \ov b_{j-1},\ov a_{i+1},\cdots, \ov a_n),$$
where $\omega$ is the induced symplectic form on $s(\Hom(-,-)^*)$.
From the symplecticity of $\omega$, one sees that $[\,,\,]$ is graded skew symmetric.

We show the Jacobi identity:
for $\a=N(\ov a_1, \ov a_2,\cdots, \ov a_n),
\b=N(\ov b_1, \ov b_2,\cdots, \ov b_m),\gamma=N(\ov c_1, \ov c_2, \cdots, \ov c_p)$ in $\Cycl^*(\mathcal A)$,
\begin{eqnarray}
&&[[\a,\b],\gamma]\nonumber\\
&=&\sum_{i,j,k,l}\pm \omega(\ov a_i, \ov b_j)\omega(\ov a_k, \ov c_l)N(\ov a_1,\cdots, \ov b_{j+1}, \cdots, \ov b_{j-1},\cdots, \ov c_{l+1},\cdots,\ov c_{l-1},\cdots, \ov a_n)\label{jacobi1}\\
&+&\sum_{i,j,k,l}\pm \omega(\ov a_i, \ov b_j)\omega(\ov b_k, \ov c_l)N(\ov a_1,\cdots, \ov b_{j+1},\cdots, \ov c_{l+1},\cdots, \ov c_{l-1},\cdots, \ov b_{j-1}, \cdots, \ov a_n).\label{jacobi2}
\end{eqnarray}
Similarly, we have
\begin{eqnarray}
&&[[\b,\gamma],\a]\nonumber\\
&=&\sum_{i,j,k,l}\pm \omega(\ov b_j, \ov c_l)\omega(\ov b_k,\ov a_i)N(\ov b_1,\cdots,\ov c_{l+1},\cdots, \ov c_{l-1},\cdots, \ov a_{i+1}, \cdots, \ov a_{i-1},\cdots, \ov b_m)\label{jacobi3}\\
&+&\sum_{i,j,k,l}\pm \omega(\ov b_j, \ov c_l)\omega(\ov c_k,\ov a_i)N(\ov b_1,\cdots, \ov c_{l+1},\cdots, \ov a_{i+1},\cdots, \ov a_{i-1},\cdots, \ov c_{l-1},\cdots, \ov b_m),\label{jacobi4}
\end{eqnarray}
and
\begin{eqnarray}
&&[[\a,\b],\gamma]\nonumber\\
&=&\sum_{i,j,k,l}\pm \omega(\ov c_l, \ov a_i)\omega(\ov c_k, \ov b_j)N(\ov c_1,\cdots, \ov a_{i+1},\cdots, \ov a_{i-1},\cdots, \ov b_{j+1}, \cdots,\ov b_{j-1},\cdots, \ov c_p)\label{jacobi5}\\
&+&\sum_{i,j,k,l}\pm \omega(\ov c_l,\ov a_i)\omega(\ov a_k, \ov b_j)N(\ov c_1, \cdots, \ov a_{i+1},\cdots, \ov b_{j+1},\cdots, \ov b_{j-1},\cdots, \ov a_{i-1},\cdots, \ov c_p).\label{jacobi6}
\end{eqnarray}
By the cyclic invariance of $N$ and symplecticity of $\omega$, (\ref{jacobi1}) cancels with (\ref{jacobi6}),
so do (\ref{jacobi2}) with (\ref{jacobi3}) and (\ref{jacobi4}) with (\ref{jacobi5}).
This proves the Jacobi identity.

Finally, we show that the bracket commutes with the boundary:
\begin{eqnarray}(b[f,g])(\ov a_1,\ov a_2,\cdots,\ov a_{n})
&=&[f,g](b(\ov a_1,\ov a_2,\cdots, \ov a_{n}))\nonumber\\
&=&[f,g]\big(\sum_{i}\sum_{k}\pm(\ov a_1,\cdots,\overline m_k(\ov a_{i},\cdots,\ov a_{i+k-1}),\cdots,\ov a_n)\big)\label{Eq_1'}\\
&&+[f,g]\big(\sum_{j}\sum_{k}\pm(\overline m_k(\ov
a_{n-j},\cdots,\ov a_n,\ov a_1,\cdots,\ov a_{i}),\cdots,\ov
a_{n-j-1})\big),\label{Eq_1''}
\end{eqnarray}
while
\begin{eqnarray}
&&([bf,g]+(-1)^{|f|}[f, bg])(\ov a_1,\ov a_2,\cdots,\ov a_{n})\nonumber\\
&=&\sum_{i,j}\sum_{p,q}\pm f(b(\ov a_i,\cdots,\ov a_j,\ov e_p))\cdot g(\omega^{pq}\ov e_q,\ov a_{j+1},\cdots,\ov a_{i-1})\label{Eq_2}\\
&+&\sum_{i,j}\sum_{p,q}\pm f(\ov a_i,\cdots,\ov a_j, \ov e_p)\cdot g(b(\omega^{pq}\ov e_q,\ov a_{j+1},\cdots,\ov a_{i-1})).\label{Eq_3}
\end{eqnarray}
From the definition of $[\,,\,]$, one sees $(\ref{Eq_2})+(\ref{Eq_3})$ contains more terms than $(\ref{Eq_1'})+(\ref{Eq_1''})$, namely, those
terms involving $\overline m_k$ acting on $\ov e_p$. The extra terms coming from
$(\ref{Eq_2})$ are
\begin{equation}
\sum_{p,q}\sum_r f(\ov a_{l+1},\cdots, \overline m_r(\ov a_k,\cdots, \ov a_j, \ov e_p, \ov a_{i},\cdots,\ov a_l))\cdot g(\omega^{pq}\ov e_q,\ov a_{j+1},\cdots, \ov a_{i-1})\label{Eq_4}
\end{equation}
and the ones from $(\ref{Eq_3})$ are
\begin{equation}
\sum_{p,q}\sum_r(\ov a_i,\cdots, \ov a_j,\ov e_p)\cdot g(\overline m_r(\ov a_k,\cdots, \ov a_{i-1}, \omega^{pq}\ov e_q,\ov a_{j+1},\cdots,\ov a_l),\cdots,\ov a_{k-1}).\label{Eq_5}
\end{equation}
However, these two groups of terms cancel with each other: by the non-degeneracy of the pairing,
\begin{eqnarray*}&&\sum_{p,q}\overline m_r(\ov a_k, \cdots,\ov a_j,\ov e_p, \ov a_i, \cdots,\ov a_l)\otimes\omega^{pq}\ov e_q\\
&=&\sum_{p,q}\sum_{s,t}\langle \overline m_r(\ov a_k, \cdots,\ov a_j,\ov e_p,\ov a_i, \cdots,\ov a_l), \ov e_s\rangle \omega^{st}\ov e_t\otimes\omega^{pq}\ov e_q\\
&=&\sum_{p,q}\sum_{s,t}\ov e_t\otimes \omega^{st}\cdot\langle \overline m_r(\ov a_k, \cdots,\ov a_j,\ov e_p,\ov a_i, \cdots,\ov a_l),\ov e_s\rangle \omega^{pq}\ov e_q\\
&\stackrel{\rm{cyclicity}}{=}&\sum_{p,q}\sum_{s,t}\pm\ov e_t\otimes \omega^{st}\cdot\langle\overline m_r(\ov a_i, \cdots, \ov a_l, \ov e_s,\ov a_k, \cdots,\ov a_j), \ov e_p\rangle \omega^{pq}\ov e_q\\
&=&\sum_{s,t}\pm\ov e_t\otimes \overline m_r(\ov a_i,\cdots,\ov a_l,\omega^{st}\ov e_s,\ov a_k,\cdots,\ov a_j)\\
&=&-\sum_{s,t}\pm\ov e_t\otimes \overline m_r(\ov a_i,\cdots,\ov a_l,\omega^{ts}\ov e_s,\ov a_k,\cdots,\ov a_j).
\end{eqnarray*}
By substituting the above identity into (\ref{Eq_4}) we get exactly (\ref{Eq_5}), and therefore $(\Cycl^*(\mathcal A),[\,,\,], b)$
is a differential graded Lie algebra.
\end{proof}

\begin{definition-lemma}[Lie coalgebra]
Suppose $\mathcal A$ is a Calabi-Yau category of degree $n$.
Define $\Cycl^*(\mathcal A)\to
\Cycl^*(\mathcal A)\otimes\Cycl^*(\mathcal A)$
by
\begin{eqnarray*}
&&(\delta f)(\ov a_1, \ov a_2,\cdots,\ov a_{n})\otimes(\ov b_1,\ov b_2,\cdots, \ov b_{m})\\
&:=&\sum_{i=1}^n\sum_{j=1}^m\sum_{p,q=1}^{\dim\Hom(A_i, B_j)}\pm f(\ov a_1,\cdots,\ov a_{i-1},\ov e_p,\ov b_j,\cdots, \ov b_{m},\ov b_1,\cdots,\ov b_{j-1}, \omega^{pq}
\ov f_q,\ov a_i,\cdots, \ov a_{n}),
\end{eqnarray*}
where $\{\ov e_p\}$ and $\{\ov f_q\}$ are bases of $\Sigma\Hom(A_i,
B_j)$ and $\Sigma\Hom(B_j,A_i)$ respectively, and
$(\omega^{pq})=((-1)^{|e_p|+1}\langle e_p,f_q\rangle)^{-1}$. Then
$(\Cycl^*(\mathcal A),\delta,b)$ forms a Lie coalgebra of degree
$2-n$.
\end{definition-lemma}

\begin{proof} From the definition of $\delta$, the following two statements are obvious:\begin{enumerate}
\item[(1)] $\delta f$ is well defined, namely, the value of $\delta f$ is invariant under the cyclic permutations of
$(\ov a_1,\ov a_2,\cdots,\ov a_n)$ and $(\ov b_1,\ov b_2,\cdots,\ov b_m)$;
\item[(2)] $\delta f$ is (graded) skew-symmetric, namely, if we switch $(\ov a_1,\ov a_2,\cdots,\ov a_n)$ and $(\ov b_1,\ov b_2,\cdots,\ov b_m)$,
the sign of the value of $\delta f$ changes.
\end{enumerate}
If writing $f\in\Cycl^*(V)$ in the form $f=N(\ov a_1,\ov a_2,\cdots,\ov a_n)$, then
\begin{eqnarray}
\delta f&=&\delta N(\ov a_1,\ov a_2,\cdots,\ov a_n)\nonumber\\
&=&\sum_{i<j}\omega(\ov a_i,\ov a_j)N(\ov a_1,\cdots,\ov a_{i-1},\ov a_{j+1},\cdots,\ov a_n)\otimes N(\ov a_{i+1},\cdots,\ov a_{j-1})\label{co_jacobi1}\\
&-&\sum_{i<j}\omega(\ov a_i,\ov a_j)N(\ov a_{i+1},\cdots,\ov a_{j-1})\otimes N(\ov a_1,\cdots,\ov a_{i-1},\ov a_{j+1},\cdots,\ov a_n)\label{co_jacobi2}.
\end{eqnarray}
Therefore
\begin{eqnarray}
&&(id\otimes\delta)\circ\delta N(\ov a_1,\ov a_2,\cdots,\ov a_n)\nonumber\\
&=&\sum_{i<j}\sum_{k<l}\omega(\ov a_i,\ov a_j)\omega(\ov a_k,\ov a_l)
  N(\ov a_1,\cdots,\ov a_{i-1},\ov a_{j+1},\cdots,\ov a_n)\nonumber\\
  &&\quad\quad\quad\quad\quad\quad\quad\quad\otimes
   N(\ov a_{j+1},\cdots,\ov a_{k-1},\ov a_{l+1},\cdots,\ov a_{j-1})\otimes N(\ov a_{k+1},\cdots,\ov a_{l-1})\label{cj1}\\
& -&\sum_{i<j}\sum_{k<l}\omega(\ov a_i,\ov a_j)\omega(\ov a_k,\ov a_l)
  N(\ov a_1,\cdots,\ov a_{i-1},\ov a_{j+1},\cdots,\ov a_n)\otimes N(\ov a_{k+1},\cdots,\ov a_{l-1})\nonumber\\
  &&\quad\quad\quad\quad\quad\quad\quad\quad\otimes N(\ov a_{j+1},\cdots,\ov a_{k-1},\ov a_{l+1},\cdots,\ov a_{j-1})\label{cj2}\\
&+&\sum_{i<j}\sum_{k<l}\omega(\ov a_i,\ov a_j)\omega(\ov a_k,\ov a_l)
  N(\ov a_{i+1},\cdots,\ov a_{j-1})\otimes N(\ov a_1,\cdots,\ov a_{k-1},\cdots,\ov a_{l+1},\cdots,\ov a_n)\nonumber\\
  &&\quad\quad\quad\quad\quad\quad\quad\quad\otimes N(\ov a_{k+1},\cdots,\ov a_{i-1},\ov a_{j+1},\cdots,\ov a_{l-1})\label{cj3}\\
&-&\sum_{i<j}\sum_{k<l} \omega(\ov a_i,\ov a_j)\omega(\ov a_k,\ov a_l)
 N(\ov a_{i+1},\cdots,\ov a_{j-1})\otimes N(\ov a_{k+1},\cdots,\ov a_{i-1},\ov a_{j+1},\cdots,\ov a_{l-1})\nonumber\\
 &&\quad\quad\quad\quad\quad\quad\quad\quad\otimes N(\ov a_1,\cdots,\ov a_{k-1},\cdots,\ov a_{l+1},\cdots,\ov a_n)\label{cj4}\\
&+&\sum_{i<j}\sum_{k<l}\omega(\ov a_i,\ov a_j)\omega(\ov a_k,\ov a_l)
 N(\ov a_{i+1},\cdots,\ov a_{j-1})\nonumber\\
 &&\quad\quad\quad\quad\quad\quad\quad\quad
 \otimes N(\ov a_1,\cdots,\ov a_{i-1},\ov a_{j+1},\cdots,\ov a_{k-1},\ov a_{l+1},\cdots,\ov a_n)\otimes N(\ov a_{k+1},\cdots,\ov a_{l-1})\label{cj5}\\
&-&\sum_{i<j}\sum_{k<l}\omega(\ov a_i,\ov a_j)\omega(\ov a_k,\ov a_l)
 N(\ov a_{i+1},\cdots,\ov a_{j-1})\nonumber\\
 &&\quad\quad\quad\quad\quad\quad\quad\quad
 \otimes N(\ov a_{k+1},\cdots,\ov a_{l-1})\otimes N(\ov a_1,\cdots,\ov a_{i-1},\ov a_{j+1},\cdots,\ov a_{k-1},\ov a_{l+1},\cdots,\ov a_n),\label{cj6}
\end{eqnarray}
where $(\ref{cj1})+(\ref{cj2})$ come from $(\ref{co_jacobi1})$ and the other terms from $(\ref{co_jacobi2})$.

Let $\tau:\a\otimes\b\otimes\gamma\mapsto\pm\gamma\otimes\a\otimes\b$ be the cyclic permutation of three elements,
then
\begin{align*}
(\ref{cj1})+\tau^2(\ref{cj3})&=0,&\tau (\ref{cj1})+(\ref{cj3})&=0,&\tau^2(\ref{cj1})+\tau(\ref{cj3})&=0;\\
(\ref{cj2})+\tau(\ref{cj4})&=0,&\tau (\ref{cj2})+\tau^2(\ref{cj4})&=0,&\tau^2(\ref{cj2})+(\ref{cj4})&=0;\\
(\ref{cj5})+\tau^2(\ref{cj6})&=0,&\tau (\ref{cj1})+(\ref{cj6})&=0,&\tau^2(\ref{cj5})+\tau(\ref{cj6})&=0;
\end{align*}
that means, $(\tau^2+\tau+id)\circ(id\otimes\delta)\circ\delta f=0$. This proves the co-Jacobi identity.

Next, we show that $b$ respects the cobracket. This is similar to the Lie case. By definition,
\begin{eqnarray}
& &((b\otimes id\pm id\otimes b)\delta (f))((\ov a_1, \ov a_2,\cdots,\ov a_{n})\otimes(\ov b_1,\ov b_2,\cdots, \ov b_{m}))\nonumber\\
&=&\delta f(b(\ov a_1, \ov a_2,\cdots,\ov a_{n})\otimes(\ov b_1,\ov b_2,\cdots, \ov b_{m})\pm(\ov a_1, \ov a_2,\cdots,\ov a_{n})\otimes b(\ov b_1,\ov b_2,\cdots, \ov b_{m})),\label{codiff1}
\end{eqnarray}
while
\begin{eqnarray}
&&\delta(b(f))((\ov a_1, \ov a_2,\cdots,\ov a_{n})\otimes(\ov b_1,\ov b_2,\cdots, \ov b_{m}))\nonumber\\
&=&\sum_{i=1}^n\sum_{j=1}^m\sum_{p,q}\pm b(f)(\ov a_1,\cdots,\ov a_{i-1},\ov e_p,\ov b_j,\cdots, \ov b_{m},\ov b_1,\cdots,\ov b_{j-1}, \omega^{pq}\ov e_q,\ov a_i,\cdots, \ov a_{n})\label{codiff3}\\
&-&\sum_{j=1}^m\sum_{i=1}^n\sum_{p,q}\pm b(f)(\ov b_1,\cdots,\ov b_{j-1},\ov e_p,\ov a_i,\cdots, \ov a_{n},\ov a_1,\cdots,\ov a_{i-1},\omega^{pq}\ov e_q,\ov b_j,\cdots, \ov b_{m}).\label{codiff4}
\end{eqnarray}
Compared with $(\ref{codiff1})$, $(\ref{codiff3})+(\ref{codiff4})$ has more terms
\begin{eqnarray}
&&\sum_{p,q}f(\ov a_1,\cdots, \overline m_r(\ov a_{k},\cdots, \ov e_p,\cdots,\ov b_{l-1}),\ov b_l,\cdots, \omega^{pq}\ov e_q,\cdots, \ov a_n)\label{cob1}\\
&+&\sum_{p,q}f(\ov a_1,\cdots,\ov e_p,\cdots,\ov b_{j-1},\ov m_r(\ov b_j,\cdots, \omega^{pq}\ov e_q,\cdots,\ov a_l),\cdots,\ov a_n)\label{cob2}\\
&+&\sum_{p,q}f(\ov a_1,\cdots,\ov m_r(\ov a_k,\cdots,\ov e_p,\ov b_j,\cdots,\omega^{pq}\ov e_q,\cdots, \ov a_{l-1}),\ov a_{l},\cdots,\ov a_n),\label{cob3}
\end{eqnarray}
However, a similar argument as in the Lie case, $(\ref{cob1})+(\ref{cob2})$ vanishes, and the terms in
$(\ref{cob3})$ come in pair (counting $e_p$ and $e_q$), which
cancel within themselves. This proves that the differential commutes with the cobracket.
\end{proof}

\begin{theorem}\label{thm_liebi}
Let $\mathcal A$ be a Calabi-Yau $A_\infty$ category of degree $n$.
The cyclic cochain complex $\Cycl^*(\mathcal A)$ has the structure
of a DG involutive Lie bialgebra of degree $(2-n,2-n)$.
\end{theorem}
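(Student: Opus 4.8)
The plan is to assemble the theorem from the three Definition-Lemmas already established. The bracket $[\,,\,]$ makes $(\Cycl^*(\mathcal A),[\,,\,],b)$ into a DG Lie algebra of degree $2-n$, and the cobracket $\delta$ makes $(\Cycl^*(\mathcal A),\delta,b)$ into a DG Lie coalgebra of degree $2-n$; the differential $b$ commutes with both operations. What remains is (i) the Drinfeld compatibility between $[\,,\,]$ and $\delta$, and (ii) the involutivity identity $[\,,\,]\circ\delta=0$. Both will be proved by the same combinatorial device used in the proofs above: write a general cyclic cochain as $\a=N(\ov a_1,\dots,\ov a_n)$, expand $[\,,\,]$ and $\delta$ using the explicit formulas in terms of $N$ and the symplectic form $\omega$ on $s(\Hom(-,-)^*)$, and match terms by the cyclic invariance of $N$ together with the skew-symmetry of $\omega$. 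The degree bookkeeping gives $(2-n,2-n)$ since both the bracket and cobracket were shown to have degree $2-n$.

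For the Drinfeld compatibility I would compute $\delta[\a,\b]$ directly. Starting from
$$[\a,\b]=\sum_{i,j}\pm\,\omega(\ov a_i,\ov b_j)\,N(\ov a_1,\cdots,\ov a_{i-1},\ov b_{j+1},\cdots,\ov b_{j-1},\ov a_{i+1},\cdots,\ov a_n),$$
apply the formula for $\delta N$ to the resulting word. A self-pairing of two letters in the $N$-word splits into two cases: either both paired letters come from the $\ov a$-block (respectively the $\ov b$-block), or one comes from each. The first case reproduces $\delta\a$ paired (via $\omega(\ov a_i,\ov b_j)$) with $\b$, i.e. the terms $[\a',\b]\otimes\a''$ and $\a'\otimes[\a'',\b]$ of the right-hand side (and symmetrically for $\b$); the second case, where the new $\delta$-pairing straddles the $\ov a$-$\ov b$ junction created by the bracket, is exactly what cancels against the ``cross'' terms, so it contributes nothing extra. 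Tracking signs through the Koszul convention — which, as emphasized in \S2 and at the start of \S4, is the only source of signs once one works in $s(\Hom(-,-)^*)$ — then yields the stated identity. The involutivity $[\,,\,]\circ\delta(\a)=0$ is lighter: from the expansion of $\delta N(\ov a_1,\dots,\ov a_n)$ as a difference of two terms related by the flip $\a'\otimes\a''\mapsto-\,\a''\otimes\a'$, applying the (skew-symmetric) bracket kills the antisymmetric combination, so $[\,,\,]\circ\delta$ vanishes identically on the chain level.

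The main obstacle I expect is purely the sign verification in the Drinfeld compatibility: there are four groups of terms on the right-hand side, each carrying Koszul factors coming from reordering blocks of desuspended morphisms, and one must check that the signs produced by expanding $\delta[\a,\b]$ agree on the nose. The cleanest way to control this is to never leave the category of cyclic words in $s(\Hom(-,-)^*)$ equipped with $\omega$: there the bracket is ``cut two strands at a crossing and reconnect'' and the cobracket is ``cut one word at a self-crossing into two,'' both with signs dictated solely by the Koszul rule, so the compatibility becomes a bijection of diagrams with matching signs, precisely the kind of check carried out in the two Definition-Lemmas. Since all three structural ingredients and their compatibility with $b$ are in place, and since $b$ is already known to be a derivation for $[\,,\,]$ and $\delta$ separately, the Drinfeld and involutivity identities complete the proof that $\Cycl^*(\mathcal A)$ is a DG involutive Lie bialgebra of degree $(2-n,2-n)$. $\qed$
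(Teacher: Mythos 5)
Your overall architecture is the paper's: quote the two Definition--Lemmas for the DG Lie algebra and DG Lie coalgebra structures, note that $b$ is already compatible with both, and reduce the theorem to the Drinfeld compatibility and involutivity. Your treatment of the Drinfeld identity also matches the paper's computation: writing $\a=N(\ov a_1,\cdots,\ov a_n)$, $\b=N(\ov b_1,\cdots,\ov b_m)$ and expanding $\delta[\a,\b]$, the pairings internal to the $a$-block (resp.\ the $b$-block) produce exactly the four terms $\a^{(1)}\ot[\a^{(2)},\b]$, $[\a^{(1)},\b]\ot\a^{(2)}$, $\b^{(1)}\ot[\a,\b^{(2)}]$, $[\a,\b^{(1)}]\ot\b^{(2)}$, while the two families of pairings straddling the junctions created by the bracket cancel against each other (the second and fifth summations in the paper's proof). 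That part is fine.

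The involutivity step, however, contains a genuine error. You argue that since $\delta(\a)$ is the antisymmetrized expression $w-\sigma(w)$ (with $\sigma$ the Koszul flip) and $[\,,\,]$ is graded skew, applying the bracket ``kills the antisymmetric combination.'' This is backwards: a graded skew-symmetric map annihilates graded \emph{symmetric} tensors, whereas on antisymmetric ones $[\,,\,]\circ(1-\sigma)=2[\,,\,]$, so your computation yields $2\sum_{i<j}\pm\,\omega(\ov a_i,\ov a_j)\,[\,N(\ov a_1,\cdots,\ov a_{i-1},\ov a_{j+1},\cdots,\ov a_n),\,N(\ov a_{i+1},\cdots,\ov a_{j-1})\,]$ rather than $0$. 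Moreover, if involutivity followed formally from the symmetries of the bracket and cobracket, every Lie bialgebra in the sense of Definition~\ref{def_Liebialg} would be involutive, which is false --- involutivity is an extra condition, and it is precisely the content one must verify here. What actually makes the sum vanish is the same pairwise cancellation mechanism you used for the Drinfeld identity (this is what the paper's ``by a similar argument as above'' refers to): expanding the bracket inside $[\,,\,]\circ\delta(\a)$ produces, for each pair of interleaved chords $(i,j)$ (the cut) and $(k,l)$ (the reconnection), a cyclic word with coefficient $\pm\,\omega(\ov a_i,\ov a_j)\,\omega(\ov a_k,\ov a_l)$; exchanging the roles of the cutting chord and the reconnecting chord reproduces the same cyclic word with the opposite Koszul sign, so the terms cancel in pairs. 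You need to carry out this chord-exchange cancellation, with the signs, to close the proof; as written, your involutivity argument does not establish $[\,,\,]\circ\delta=0$.
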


\begin{proof}
From the above three definitions, we only need to prove that $(\Cycl^*(\mathcal A), [\,,\,],\delta)$ satisfies the Drinfeld compatibility and
the involutivity.
Let $\a=N(\ov a_1, \cdots, \ov a_n)$ and $\b=N(\ov b_1, \cdots, \ov b_m)$, and write
$\delta(\a)=\a^{(1)} \ot \a^{(2)}$ and $\delta(\b)=\b^{(1)}\ot
\b^{(2)}$.

We have
$$[\a,\b]
= \sum_{i,j} \pm \omega(\ov a_i,\ov b_j)
N(\ov a_{i+1},\cdots,\ov a_,\ov a_1\cdots, \ov a_{i-1}, \ov b_{j+1},\cdots,\ov b_m,\ov b_1,\cdots,\ov b_{j-1})$$ and
\begin{align*}
& \delta[\a,\b] \\
=&\sum_{i,j,k,l} \pm\omega( \ov a_i,\ov b_j)\omega( \ov a_k,\ov a_l)
N(\ov a_{k+1},\cdots,\ov a_{l-1}) \ot N(\ov a_{l+1},\cdots,\ov a_{i-1},\ov b_{j+1},\cdots,\ov b_{j-1},\ov a_{i+1},\cdots,\ov a_{k-1})\\
+&   \sum_{i,j,k,l} \pm\omega( \ov a_i,\ov b_j)\omega( \ov a_k,\ov a_l)
N(\ov a_{k+1},\cdots,\ov a_{i-1},\ov b_{j+1},\cdots,\ov b_{l-1}) \ot
N(\ov b_{l+1},\cdots,\ov b_{j-1},\ov a_{i+1},\cdots,\ov a_{k-1}) \\
+ & \sum_{i,j,k,l} \pm\omega( \ov a_i,\ov b_j)\omega( \ov a_k,\ov a_l)
N(\ov a_{k+1},\cdots,\ov a_{i-1},\ov b_{j+1},\cdots,\ov b_{j-1},\ov a_{i+1},\cdots,\ov a_{l-1})
\ot
N(\ov a_{l+1},\cdots,\ov a_{k-1}) \\
+ & \sum_{i,j,k,l} \pm\omega( \ov a_i,\ov b_j)\omega( \ov a_k,\ov a_l)
N(\ov b_{k+1},\cdots,\ov b_{l-1}) \ot
N(\ov b_{l+1},\cdots,\ov b_{j-1},\ov a_{i+1},\cdots,\ov a_{i-1},\ov b_{j+1},\cdots,\ov b_{k-1}) \\
+ & \sum_{i,j,k,l} \pm\omega( \ov a_i,\ov b_j)\omega( \ov a_k,\ov a_l)
N(\ov b_{k+1},\cdots,\ov b_{j-1},\ov a_{i+1},\cdots,\ov a_{l-1}) \ot
N(\ov a_{l+1},\cdots,\ov a_{i-1},\ov b_{j+1},\cdots, \ov b_{k-1}) \\
+ &\sum_{i,j,k,l} \pm\omega( \ov a_i,\ov b_j)\omega( \ov a_k,\ov
a_l) N(\ov b_{k+1},\cdots,\ov b_{j-1},\ov a_{i+1},\cdots,\ov
a_{i-1},\ov b_{j+1},\cdots,\ov b_{l-1}) \ot N(\ov b_{l+1},\cdots,\ov
b_{k-1}).
\end{align*}
In the above, the second summation and the fifth summation cancel
with each other. The first summation is equal to $\a^{(1)} \ot
[\a^{(2)}, \b]$; the third summation is equal to $[\a^{(1)}, \b] \ot
\a^{(2)}$; the forth summation is equal to $\b^{(1)} \ot [\a,
\b^{(2)}]$; and the sixth summation is equal to $[\a, \b^{(1)}] \ot
\b^{(2)}$. Thus we obtain the Drinfeld compatibility.

Let $\a=N(\ov a_1,\cdots,\ov a_n)$, then
\begin{eqnarray*}
\delta(\a)&=&\sum_{i<j}\pm\omega(\ov a_i,\ov a_j)N(\ov a_1,\cdots,\ov a_{i-1},\ov a_{j+1},\cdots,\ov a_n)\otimes N(\ov a_{i+1},\cdots,\ov a_{j-1})\\
&-&\sum_{i<j}\pm\omega(\ov a_i,\ov a_j)N(\ov a_{i+1},\cdots,\ov a_{j-1})\otimes N(\ov a_1,\cdots,\ov a_{i-1},\ov a_{j+1},\cdots,\ov a_n).
\end{eqnarray*}
Noting that the two terms in the right hand side are anti-symmetric and by a similar argument as above, one obtains that
$[\,,\,]\circ\delta=0$ holds identically.
\end{proof}

Geometrically, for a Calabi-Yau $A_\infty$ category $\mathcal A$, if
we view $\mathcal A$ as an open TCFT, then the bracket and cobracket
are given by the gluing and splitting of the open states of
$\mathcal A$ in all possible ways. The construction is inspired from
string topology (see \cite{CS02} and also \S\ref{ex_st}), and some
of the above computations are similar to \cite{CEG}.

\subsection{Noncommutative symplectic geometry}

Our construction of the Lie bialgebra was inspired by string
topology (see \cite{CS02,SullivanSurvey} and \S\ref{ex_st}).
After the first version of the paper was put on arXiv, we were
informed by Kontsevich that he and Soibelman (\cite[\S10]{KS}) have
a nice cohomological characterization of the cyclic structure, and
our formula can also be deduced from theirs. Some related results
are Kontsevich \cite{Kontsevich1}, Ginzburg \cite{Ginzburg},
Schedler \cite{Schedler} and Hamilton \cite{Ham}; an essentially equivalent result
also appears in Barannikov \cite{Bara}. In this
subsection we would like to study this relationship in more detail.

\begin{definition}[Noncommutative differential and de Rham forms]
Let $V$ be a vector space. The module of noncommutative $1$-forms $\Omega^1(V)$ is defined by
$$\Omega^1(V):=T(V^*)\oplus T^+(V^*).$$
There is a $T(V^*)$-bimodule structure on $\Omega^1(V)$ which is given by
$$a\cdot(x\otimes y):=ax\otimes y;\quad (x\otimes y)\cdot a:=x\otimes ya-xy\otimes a,$$
for $a,x\in T(V^*)$ and $y\in T^+(V^*)$. Define $d: T(V^*)\to\Omega^1(V)$ by
$$d(x):=1\otimes x, \quad d(1)=0.$$
Let $A:=T(V^*)$. The algebra of {\it noncommutative differential forms} $\Omega^\bullet(V)$ are defined by
$$\Omega^\bullet(V):=T_{A}(\Sigma\Omega^1(V))=A\oplus\bigoplus_{n=1}^\infty\underbrace{\Sigma\Omega^1(V)\otimes_A\cdots\otimes_A\Sigma\Omega^1(V)}_{n}.$$
Lifting $d$ to $\Omega^\bullet(V)$ gives a differential graded associative algebra structure on $\Omega^\bullet(V)$.
The {\it noncommutative de Rham forms} $DR^\bullet(V)$ are defined by
$$DR^\bullet(V):=\frac{\Omega^\bullet(V)}{[\Omega^\bullet(V),\Omega^\bullet(V)]},$$
with the induced differential, still denoted by $d$.
\end{definition}

\begin{definition}[Lie derivative and contraction]
Let $V$ be a vector space and $\Omega^{\bullet}(V)$ be as above, and let $\xi:T(V^*)\to T(V^*)$ be a vector field.
\begin{enumerate}\item The {\it Lie derivative} $L_\xi:\Omega^\bullet(V)\to\Omega^\bullet(V)$ with respect to $\xi$
is defined by:
$$L_\xi(x):=\xi(x),\quad L_\xi(dx):=(-1)^{|\xi|}d(\xi(x)),\quad\mbox{for}\quad x\in T(V^*).$$
\item The {\it contraction} $\iota_\xi:\Omega^\bullet(V)\to\Omega^\bullet(V)$ of $\xi$ is defined by:
$$\iota_\xi(x):=0,\quad \iota_\xi(dx):=\xi(x),\quad\mbox{for}\quad x\in T(V^*).$$
\end{enumerate}
The Lie derivative and contraction preserve the commutators and hence descend to $DR^\bullet(V)$.
\end{definition}

 \begin{definition}[Symplectic form and symplectic field]Let $V$ be a vector space. A $2$-form $\omega\in DR^2(V)$
 is called {\it symplectic} if
 \begin{enumerate}
 \item $\omega$ is closed, i.e. $d\omega=0$;
 \item $\omega$ is non-degenerate, i.e. there is a bijection:
 $$\begin{array}{ccl}
 Der(T(V^*))&\longrightarrow& DR^1(V),\\
 \xi&\longmapsto&\iota_\xi(\omega).
 \end{array}$$
 \end{enumerate}
 A vector field $\xi:T(V^*)\to T(V^*)$ is called a {\it symplectic vector field} if $L_\xi(\omega)=0$.
 \end{definition}

A symplectic form $\omega\in DR^2(V)$ is called a {\it constant form} if it can be written as $\displaystyle\sum_i dx_i\wedge dy_i$ for some
functions $x_i,y_i\in V^*$. The following theorem is due to Kontsevich and others cited above:

\begin{theorem}[Lie bialgebra on the noncommutative $0$-forms]\label{poisson_liebi}
Let $V$ be a vector space with a constant symplectic form $\omega$. The symplectic form on $V$ induces
a symplectic form on $V^*$ via the identity $V\to V^*: v\mapsto \omega(v,\cdot)$, which is still denoted by $\omega$.
Define $\{\,,\,\}:DR^0(V)\otimes DR^0(V)\to DR^0(V)$ and $\Delta: DR^0(V)\to DR^0(V)\otimes DR^0(V)$ as follows:
for $a_1,a_2,\cdots, a_n;b_1,b_2,\cdots,b_m\in V^*$,
\begin{equation*}
\{[a_1a_2\cdots a_n],[b_1b_2\cdots b_n]\}=\sum_i\sum_j\pm\omega(a_i,b_j)([a_1\cdots a_{i-1}b_{j+1}\cdots  b_nb_1\cdots b_{j-1}a_{i+1}\cdots a_n]),
\end{equation*}
and
\begin{eqnarray*}
&&\Delta([a_1a_2\cdots a_n])\\
&=&\frac{1}{2}\sum_{i<j}\pm\omega(a_i,a_j)([a_1\cdots a_{i-1}a_{j+1}\cdots a_n])\otimes([a_{i+1}\cdots a_{j-1}])\\
&+&\frac{1}{2}\sum_{i<j}\pm\omega(a_i,a_j) ([a_{i+1}\cdots a_{j-1}])\otimes([a_1\cdots a_{i-1}a_{j+1}\cdots a_n]).
\end{eqnarray*}
Then $(DR^0(V), \{\,,\,\},\Delta)$ forms an involutive Lie bialgebra.
\end{theorem}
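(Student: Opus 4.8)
The plan is to recognise Theorem~\ref{poisson_liebi} as the special case of Theorem~\ref{thm_liebi} in which the Calabi-Yau $A_\infty$ category has a single object and all products vanish. First I would produce a one-object Calabi-Yau $A_\infty$ algebra $W$ with $m_i=0$ for every $i$, whose augmented bar construction $T^+(\Sigma W)$ equals $T^+(V^*)$ and whose invariant pairing induces, via $\omega(\ov\a,\ov\b)=(-1)^{|\ov\a|}\langle\a,\b\rangle$, the given constant symplectic form $\omega$ on $V^*$; this just amounts to taking $W$ to be a suspension of $V$ and transporting $\omega$ along it, the only subtlety being the Koszul signs relating the graded-symmetric $\langle\,,\,\rangle$ to the graded-skew $\omega$. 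Since every $m_i=0$, the $A_\infty$ relations and the cyclic invariance of $m_n$ are vacuous, so $W$ is genuinely Calabi-Yau; and the one hypothesis of the theorem that is actually used -- that $\omega$ is \emph{constant} -- is precisely what guarantees that the (co)bracket formulas below are well defined on $DR^0(V)=T(V^*)/[\,,\,]$, just as in the general case the Calabi-Yau pairing lives on the morphism spaces themselves. Under $T^+(\Sigma W)\cong T^+(V^*)$, the space of cyclically coinvariant elements of the augmented bar construction -- which by \S2 is $\Cycl_*(W)$, hence, via the non-degenerate pairing, also $\Cycl^*(W)$, up to a degree shift -- is exactly $DR^0(V)$.

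Next I would match the operations. Writing a cyclic word of $DR^0(V)$ as $N(\ov a_1,\cdots,\ov a_n)$ with $N=1+\ov t+\ov t^{2}+\cdots$ the cyclic norm operator, the bracket from the Definition--Lemma defining the Lie bracket on $\Cycl^*$ becomes, after applying $N$ and collecting terms, $[\a,\b]=\sum_{i,j}\pm\,\omega(\ov a_i,\ov b_j)\,N(\ov a_{i+1},\cdots,\ov a_{i-1},\ov b_{j+1},\cdots,\ov b_{j-1})$, which is literally the necklace bracket $\{\,,\,\}$ of the statement; and the cobracket $\delta$ from the Definition--Lemma defining the Lie cobracket is, by (\ref{co_jacobi1})--(\ref{co_jacobi2}), the antisymmetrisation $\Delta$, the factor $\tfrac12$ absorbing the fact that each of the two summations defining $\Delta$ is already skew under the flip of the tensor factors. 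Every sign on both sides is the Koszul sign attached to cutting and regluing the letters, so the identification is literal once the sign of $\omega$ is fixed as above; reconciling this normalisation and these signs is the one piece of genuine bookkeeping. Since $b$ is built from the $\ov m_i$, all of which vanish, the differential on $\Cycl^*(W)$ is zero, so the differential graded involutive Lie bialgebra furnished by Theorem~\ref{thm_liebi} degenerates here to a plain involutive Lie bialgebra in the sense of Definition~\ref{def_Liebialg} -- which is the assertion.

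With this dictionary the four axioms follow from the proofs already given: graded skew-symmetry of $\{\,,\,\}$ from the skew-symmetry of $\omega$; the Jacobi identity from the pairwise cancellation of the summations (\ref{jacobi1})--(\ref{jacobi6}) under cyclic invariance of $N$ and symplecticity of $\omega$; the co-Jacobi identity for $\Delta$ from the summations (\ref{cj1})--(\ref{cj6}) together with the relations they satisfy under the cyclic permutation $\tau$ of the three tensor factors, yielding $(\tau^2+\tau+\mathrm{id})\circ(\mathrm{id}\ot\delta)\circ\delta=0$; the Drinfeld compatibility from the six-summation cancellation in the proof of Theorem~\ref{thm_liebi}, after identifying the surviving summands with $\a^{(1)}\ot[\a^{(2)},\b]$, $[\a^{(1)},\b]\ot\a^{(2)}$, $\b^{(1)}\ot[\a,\b^{(2)}]$ and $[\a,\b^{(1)}]\ot\b^{(2)}$; and the involutivity $\{\,,\,\}\circ\Delta=0$ from the mutual antisymmetry of the two terms of $\Delta(\a)$, exactly as at the end of the proof of Theorem~\ref{thm_liebi}. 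The parts of those proofs concerning compatibility of $b$ with the (co)bracket are vacuous here. Alternatively, one may simply rerun each of these combinatorial cancellations directly on $DR^0(V)$ with no reference to the $A_\infty$ formalism; the content is identical, and in either route the main -- and essentially the only -- obstacle is keeping the Koszul signs and the normalising factor $\tfrac12$ coherent throughout.
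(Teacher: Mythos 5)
Your argument is essentially correct, but it follows a genuinely different route from the paper: the paper does not prove Theorem~\ref{poisson_liebi} at all — it attributes the result to Kontsevich, Ginzburg, Schedler, Hamilton and Barannikov and simply refers the reader to those sources — and only afterwards records, as a separate Proposition, that under Quillen's isomorphism $\ker(1-T)\cong\mathrm{coker}(1-T)$ the Lie bialgebra of Theorem~\ref{thm_liebi} on $\Cycl^*(V)$ coincides with the necklace structure on the noncommutative $0$-forms. You run this dictionary in the opposite direction: you realize $(V,\omega)$ as the (de)suspension of a cyclic pairing on a one-object Calabi-Yau $A_\infty$ algebra with all $m_i=0$, so that $b=0$, and then quote the already-proved cancellations (\ref{jacobi1})--(\ref{jacobi6}), (\ref{cj1})--(\ref{cj6}), the six-term Drinfeld computation and the involutivity argument. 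This is non-circular, since Theorem~\ref{thm_liebi} is established by direct computation with no appeal to Theorem~\ref{poisson_liebi}, and it has the virtue of making the statement self-contained within the paper; what the paper's citation buys instead is the conceptual framing (the bracket as the Poisson bracket of noncommutative symplectic geometry) without having to re-audit the dictionary. The points needing care in your route are exactly the ones you flag: the suspension/dualization bookkeeping (you need cyclic words in $V^*$, i.e.\ the identification $\Cycl_*\cong\Cycl^*$ via the pairing, which implicitly uses finite-dimensionality — harmless here since nondegeneracy of $\omega$ already forces it), the conversion between the graded-symmetric Calabi-Yau pairing and the skew form $\omega$, and the factor $\tfrac12$ in $\Delta$; for the last, it is cleaner to observe that rescaling the cobracket by any nonzero constant preserves co-Jacobi, Drinfeld compatibility and involutivity, so the normalization mismatch cannot affect the truth of the theorem, only the precise isomorphism statement of the subsequent Proposition.
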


In terms of the terminologies introduced above, the Lie bracket is
identical to the Poisson bracket on the noncommutative $0$-forms.
The reader may refer to the above listed references for more
details.

So far in this subsection we have assumed only that $V$ is a vector space. Now suppose $V$ is a cyclic $A_\infty$ space. Let $g:V\otimes V$
be the non-degenerate pairing on $V$, which induces a symplectic form $\omega$ on $\Sigma V$.
There is an isomorphism (Quillen \cite[Lemma 1.2, pp. 144]{Quillen})
$$\Cycl^*(V)=\mathrm{ker}(1-T)\cong\mathrm{coker}(1-T),$$
namely, the cyclic cochain complex of $V$ is isomorphic to the quotient commutator space.
With this identity, the Lie bialgebra defined in the cyclic cochain complex is exactly the one for $(\Sigma V,\omega)$ defined above. Thus we obtain:

\begin{proposition}Let $(V,\{m_i\},g)$ be a cyclic $A_\infty$ algebra over $k$, and denote by $\omega$ the the induced symplectic form on $\Sigma V$.
Then cyclic cochain complex $\Cycl^*(V)$ is isomorphic
to the noncommutative $0$-forms on $\Sigma V$ as Lie bialgebras, where
the Lie bialgebra of the former is given in Theorem~\ref{thm_liebi}
and the Lie bialgebra of the latter is given in Theorem~\ref{poisson_liebi}.
\end{proposition}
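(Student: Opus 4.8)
The strategy is to make the identification of the two underlying spaces explicit and then to observe that, under it, the bracket and cobracket formulas of Theorems~\ref{thm_liebi} and~\ref{poisson_liebi} coincide up to sign‑ and coefficient‑bookkeeping.

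First I would identify the underlying vector spaces. By the lemma in \S2 (the one asserting that $\Cycl_*(V)$, after the degree shift by $1$, is the space of cyclically coinvariant elements $T^+(\Sigma V)/(1-\overline t)$ of the augmented bar construction), together with Quillen's isomorphism $\ker(1-T)\cong\mathrm{coker}(1-T)$ — legitimate since $\Q\subseteq k$, the cyclic averaging operator $N$ being available — a homogeneous element of $\Cycl^*(V)$ may be written $N(\ov a_1,\cdots,\ov a_n)$ with $\ov a_i\in\Sigma V$. On the other side, $DR^0(\Sigma V)=T\bigl((\Sigma V)^*\bigr)/[\,\cdot\,,\,\cdot\,]$ is, by definition of the commutator quotient, exactly the graded cyclic coinvariants of $T((\Sigma V)^*)$. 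The nondegenerate pairing $g$ induces the symplectic form $\omega$ on $\Sigma V$ and hence a degree‑preserving isomorphism $\Sigma V\cong(\Sigma V)^*$, $v\mapsto\omega(v,\cdot)$; this extends to tensor algebras, intertwines the cyclic operators, and so descends to a linear isomorphism $\Phi\colon\Cycl^*(V)\to DR^0(\Sigma V)$ sending $N(\ov a_1,\cdots,\ov a_n)$ to the cyclic word $[\ov a_1\cdots\ov a_n]$. The two sides differ only by the scalar summand $k\subset DR^0(\Sigma V)$, which is bracket‑central and cobracket‑trivial, hence irrelevant; restricting to $T^+$ removes even this.

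Next I would check $\Phi$ intertwines the brackets. In the proof of Theorem~\ref{thm_liebi} the Lie bracket is rewritten, after using cyclicity of $N$ to fix a basepoint, as $[N(\ov a_1,\cdots,\ov a_n),N(\ov b_1,\cdots,\ov b_m)]=\sum_{i,j}\pm\,\omega(\ov a_i,\ov b_j)\,N(\ov a_1,\cdots,\ov a_{i-1},\ov b_{j+1},\cdots,\ov b_m,\ov b_1,\cdots,\ov b_{j-1},\ov a_{i+1},\cdots,\ov a_n)$, which is term‑for‑term the Poisson bracket $\{[\ov a_1\cdots\ov a_n],[\ov b_1\cdots\ov b_m]\}$ of Theorem~\ref{poisson_liebi}; the signs match because, as emphasized after the lemma in \S2 and at the start of \S4, in both constructions the only signs that occur are the Koszul signs for $\Sigma V$ with its symplectic form. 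The same device handles the cobracket: the cobracket, rewritten in the proof of Theorem~\ref{thm_liebi} in the $N$‑form, is the antisymmetrized sum over all ways of cutting the cyclic word at two letters, and one bookkeeping remark reconciles it with $\Delta$ of Theorem~\ref{poisson_liebi} — cyclic invariance of the $N$‑form together with the graded antisymmetry of $\omega$ shows that reindexing $i\leftrightarrow j$ and swapping the two tensor factors carries the subtracted sum over $\{i<j\}$ onto the first one, so that the antisymmetrized expression equals exactly one half of the two‑term expression of Theorem~\ref{poisson_liebi}, which is precisely where the factor $\tfrac12$ there comes from. Hence $\Phi$ is an isomorphism of Lie bialgebras, and involutivity passes through automatically (it holds on both sides). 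If a DG statement is wanted, one adds that under $\Phi$ the Hochschild differential $b$ is carried to the differential on $DR^0(\Sigma V)$ determined by $\{m_i\}$, namely the Lie derivative $L_{\overline m}$ along the symplectic vector field $\overline m$ — the same identification already implicit in \S4.1 and in the references cited there.

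The main obstacle is the bookkeeping in the third step: the two theorems organize signs and dual bases differently (contractions against $\{\ov e_p\},\{\ov f_q\}$ with the inverse matrix $(\omega^{pq})$ versus the shorthand $\pm\omega(a_i,a_j)$), so the real content of the proof is verifying that these agree and that the combinatorial factor $\tfrac12$ in Theorem~\ref{poisson_liebi} is produced exactly by cyclic invariance of the $N$‑form. Everything else is a direct translation once $\Phi$ is in place.
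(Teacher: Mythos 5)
Your proposal follows essentially the same route as the paper: the paper's entire argument is the paragraph preceding the proposition, which invokes Quillen's isomorphism $\ker(1-T)\cong\mathrm{coker}(1-T)$ to identify $\Cycl^*(V)$ with the commutator quotient (the noncommutative $0$-forms on $\Sigma V$) and then observes that under this identification the bracket and cobracket formulas coincide. Your additional bookkeeping (the pairing-induced identification, the sign conventions, and the factor $\tfrac12$ in the cobracket) is exactly the verification the paper leaves implicit, so the proposal is correct and matches the paper's approach.
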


\subsection{Example from string topology}\label{ex_st}
Let $M$ be a compact, smooth manifold and $LM$ its free loop space.
In \cite{CS02} (see also \cite{SullivanSurvey}) Chas and Sullivan
showed, under the program of string topology, that the equivariant
homology of the free loop space $LM$ has an involutive Lie bialgebra
structure. Such a Lie bialgebra even holds on the chain level, which
can be roughly described as follows: Suppose $\a,\b$ are two
transversal equivariant chains in $LM$ (that means, they are in
general position); one tries to move the base points of $\a$ and
$\b$ along themselves. At time $t_1$ for $\a$ and $t_2$ for $\b$,
one forms a new chain whose base points are the intersections of
these two base points at $t_1$ and $t_2$, and whose fiber is the
concatenation of the loops from $\a$ and $\b$. By the $S^1$
invariance of $\a$ and $\b$, this new chain is $S^1$ invariant, too.
The Lie bracket of $\a$ and $\b$ (called the {\it string bracket})
is thus defined by considering all $t_1$ and $t_2$ in $S^1$. The Lie
cobracket of an equivariant chain, say $\a$, is defined as follows:
again, assume $\a$ is in general position, one moves the base points
of $\a$ along itself. Suppose at time $t_1$ and $t_2$, the base
points intersect, then one forms two new chains in $LM$ whose base
points are the intersections, and whose fibers are the arcs in $\a$
from $t_1$ to $t_2$ and the rest (note that both arcs are now
circles), respectively. Chas and Sullivan argued that with the Lie
bracket and cobracket defined this way, the equivariant chain
complex of $LM$ forms an involutive Lie bialgebra over $\mathbb Z$.

There are some difficulties in the understanding of
the Lie bialgebra of string topology, especially in its definition the partially defined transversal intersection
is used. However, we may still get some idea from the constructions in the previous sections.
First, with the work of K.-T. Chen (\cite{Chen77}) and J. D. S. Jones (\cite{Jo87}),
it is nowadays known to topologists that:

\begin{theorem}[Chen; Jones]Suppose $M$ is a simply connected manifold and $C^*(M)$ is a cochain model
of $M$, then the Hochschild (resp. cyclic) chain complex of $C^*(M)$ is qusi-isomorphic to the ordinary (resp. equivariant) chain complex of $LM$.
\end{theorem}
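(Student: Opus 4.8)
The plan is to combine the cyclic bar (cosimplicial) model of the free loop space with an Eilenberg--Moore type spectral sequence, the latter being precisely the point where simple connectivity is used. First I would recall that the circle is the geometric realization of Connes' cyclic model of $S^1$, so that $LM=\mathrm{Map}(S^1,M)$ is the (homotopy) totalization of the cosimplicial space
$$[n]\;\longmapsto\;M^{\times(n+1)},$$
with cofaces the partial diagonals $M^{\times n}\to M^{\times(n+1)}$ and codegeneracies the coordinate projections. This cosimplicial space is moreover a \emph{cyclic} space, the extra operator being the cyclic permutation of the factors; under geometric realization this cyclic structure induces exactly the rotation $S^1$-action on $LM$, and this will be the input for the equivariant half of the statement.

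Next I would apply the normalized singular cochain functor $C^*(-;k)$ together with the Eilenberg--Zilber and Künneth quasi-isomorphisms $C^*(M^{\times(n+1)})\simeq C^*(M)^{\ot(n+1)}$. These identify, compatibly with all structure maps, the cochain complex $\{C^*(M^{\times(n+1)})\}_{n\ge 0}$ coming from the loop-space model with the cyclic bar complex of the differential graded algebra $C^*(M)$, whose structure maps are the iterated products, the wrap-around product, the unit insertions and Connes' cyclic operator. Passing to the associated total complex would then give precisely $\Hoch_*(C^*(M))$, and passing to its cyclic coinvariants would give $\Cycl_*(C^*(M))$. When $k=\mathbb R$ and $C^*(M)=\Omega^*(M)$, this comparison is realized concretely by Chen's iterated integrals (\cite{Chen77}).

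Finally I would show that this comparison is a quasi-isomorphism by running the Eilenberg--Moore spectral sequence of the homotopy pullback $LM\simeq M\times^{h}_{M\times M}M$ (equivalently, the Bousfield--Kan spectral sequence of the cosimplicial space above): its $E_2$-term is the Hochschild, resp.\ $S^1$-equivariant, cohomology of $C^*(M)$, and it abuts to $H^*(LM)$, resp.\ $H^*_{S^1}(LM)$. The hard part will be \emph{strong convergence}: since the base $M$ of the fibration $\Omega M\to LM\to M$ is simply connected, the fibre is connected and the obstructing $\lim^1$ terms vanish, so the spectral sequence converges strongly, the edge homomorphism is an isomorphism, and the total-complex map above is a quasi-isomorphism; without simple connectivity this genuinely fails, which is exactly why the hypothesis is imposed. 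For the equivariant assertion I would run the same argument $S^1$-equivariantly, using the rotation action above together with the standard comparison between Connes' cyclic complex and the Borel construction on $LM$ (\cite{Jo87}), the only bookkeeping being the degree shift coming from $H^*(BS^1)$. That the conclusion is independent of the chosen cochain model follows since any two cochain models of $M$ are linked by a zig-zag of $A_\infty$ (or DG) quasi-isomorphisms, under which both $\Hoch_*$ and $\Cycl_*$ are invariant.
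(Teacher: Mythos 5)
You should know that the paper does not prove this statement at all: it is quoted as a classical theorem, with \cite{Chen77} and \cite{Jo87} as the references, and is only used to transport the string-topology structures to the cyclic side. So there is no internal argument to compare against; what you have written is essentially a reconstruction of Jones's original proof, with Chen's iterated integrals appearing, as you note, as its de Rham realization. Your outline has the right skeleton: the cocyclic space $[n]\mapsto M^{\times(n+1)}$ whose totalization is $LM$ with the rotation action coming from the cyclic structure, the Eilenberg--Zilber comparison with the cyclic bar complex of $C^*(M)$, and strong convergence of the Eilenberg--Moore/Bousfield--Kan spectral sequence, which is exactly where simple connectivity enters. Three points would need care in a full write-up. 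First, the cross-product maps $C^*(M)^{\otimes(n+1)}\to C^*(M^{\times(n+1)})$ are strictly compatible with the cosimplicial structure but only compatible up to coherent homotopy with the cyclic operator (Alexander--Whitney/cross products are not symmetric); making the comparison of cyclic objects honest is a nontrivial part of \cite{Jo87} and cannot be absorbed into the phrase ``compatibly with all structure maps.'' Second, what your argument identifies directly is $\Hoch_*(C^*(M))$, resp.\ the cyclic theory, with the cochain complex computing $H^*(LM)$, resp.\ $H^*_{S^1}(LM)$; to reach the statement phrased with the chain, resp.\ equivariant chain, complex of $LM$ one must dualize, using that $k$ is a field and finite-type hypotheses --- this is in fact the dual form (cyclic cochains of $C^*(M)$ versus equivariant chains of $LM$) that the paper actually uses afterwards, so the bookkeeping should be made explicit rather than left implicit. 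Third, identifying Connes' quotient complex $\Hoch_*/(1-t)$ with the Borel construction requires characteristic zero (otherwise one needs the full cyclic bicomplex or the negative cyclic theory); this is consistent with the paper's standing assumption that $k$ contains $\mathbb{Q}$, but it should be stated. With these caveats, your route is the standard Chen--Jones argument and is sound.
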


As a corollary, the cyclic {\it cochain} complex of $C^*(M)$ is then quasi-isomorphic to the equivariant {\it chain} complex of $LM$.

Recently, Hamilton and Lazarev constructed in \cite[Section 5]{HL} a finite dimensional cyclic $C_\infty$ algebra over $\mathbb Q$,
modeling the cochain complex of $M$. A cyclic $C_\infty$ algebra is a homotopy commutative associative algebra with an invariant bilinear form, it is
automatically a cyclic $A_{\infty}$ algebra.

\begin{theorem}\label{thm_st}
Let $M$ be a simply connected manifold and $V$ be the cyclic $A_\infty$ algebra constructed by Hamilton and Lazarev that models the cochain
complex of $M$. Then the Lie bialgebra of $V$ given in Theorem~\ref{main_thm} models the one of string topology tensored with the rational numbers.
\end{theorem}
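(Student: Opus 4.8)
The plan is to compare the two involutive Lie bialgebras by transporting both to a common chain model via K.-T.~Chen's iterated integrals. First I would record that, since everything is over $\mathbb{Q}$, cyclic $A_\infty$ algebras admit cyclic minimal models, and the Lie bialgebra of Theorem~\ref{thm_liebi} is invariant under cyclic $A_\infty$ quasi-isomorphism; hence the Lie bialgebra attached to $V$ depends only on the cyclic $A_\infty$ quasi-isomorphism type of $V$. By Hamilton--Lazarev this is the type of the rational cochain algebra $C^*(M;\mathbb{Q})$ together with its Poincar\'e duality pairing, so the Lie bialgebra may be computed from any Poincar\'e duality CDGA model $A$ of $M$ equipped with a chain-level representative $\Delta_A\in A\otimes A$ of the diagonal class. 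The reason for invoking rational homotopy theory is precisely that such a model carries a \emph{chain-level} non-degenerate pairing, which is what removes the transversality obstruction described in the introduction.

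Next I would invoke the Chen--Jones theorem in the form quoted above: Chen's iterated integral map induces a quasi-isomorphism from the cyclic chain complex of $A$ to the $S^1$-equivariant chains of $LM$, hence a quasi-isomorphism from $\Cycl^*(A)$ to the equivariant chain complex $C^{S^1}_*(LM;\mathbb{Q})$. Under this dictionary a generator $N(\ov a_1,\cdots,\ov a_n)$ of $\Cycl^*(A)$ corresponds to the equivariant chain of loops obtained by integrating $a_1,\dots,a_n$ over the configuration of $n$ marked points on the loop, and the passage to cyclic coinvariants matches the rotation of the marked points, i.e.\ the $S^1$-action on $LM$.

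The heart of the proof is the compatibility of the operations: under this identification the algebraic bracket and cobracket of Theorem~\ref{thm_liebi} must go over to the Chas--Sullivan string bracket and cobracket on $H^{S^1}_*(LM;\mathbb{Q})$. Geometrically this is the statement made after the proof of Theorem~\ref{thm_liebi}, that, viewing $V$ as an open TCFT, the bracket glues two open states along a pair of marked points and the cobracket splits one. On the string side, bringing the marked points $t_1$ on $\a$ and $t_2$ on $\b$ into coincidence in $M$ is precisely the insertion of the diagonal class $\sum_{p,q}\ov e_p\otimes\omega^{pq}\ov f_q$; concatenating the two loops at that point is the formation of the figure-eight; and running over all $t_1,t_2\in S^1$ is the double summation over $i,j$ in the formula for $[\,,\,]$. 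Reading the same diagonal class in the other direction matches the splitting of a cyclic word at two of its letters with the cutting of a loop at a self-intersection, which is the string cobracket. Making this precise amounts to checking that iterated integrals intertwine ``insert $\Delta_A$ and concatenate'' with ``intersect and concatenate'', which I would carry out by a direct computation with iterated integrals over the relevant configuration spaces of marked intervals, in the spirit of~\cite{CEG}.

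The main obstacle I anticipate is exactly this last compatibility, and inside it the handling of transversality: Chas and Sullivan's operations are only partially defined on chains, so ``models'' must be understood as agreement of the induced operations on homology (or on a transversal sub-model). I would deal with this by fixing the Poincar\'e duality model $A$ together with a propagator representing $\Delta_A$, for which the iterated integral map is a genuine chain map compatible with the gluing and splitting operations up to coherent homotopy, and then pass to homology, where the comparison becomes an equality. With bracket and cobracket matched, the involutivity on the two sides (Theorem~\ref{thm_liebi} and Chas--Sullivan) corresponds under the same identification, completing the proof.
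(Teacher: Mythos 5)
Your proposal follows essentially the same route as the paper: invariance of cyclic (co)homology under quasi-isomorphism together with the Chen--Jones identification of $\Cycl^*$ of a cochain model with the equivariant chains of $LM$, followed by matching the cyclic rotation (the operator $N$) with base-point rotation and the bracket/cobracket with Chas--Sullivan's concatenation and splitting at intersection points. The paper's own argument is only a brief sketch of exactly this; your version fills in more of the intended detail (Poincar\'e duality model, diagonal class, iterated-integral compatibility in the spirit of \cite{CEG}) but is not a different method.
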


\begin{proof}
First observe that the Hochschild and cyclic (co)homology groups are invariant
under quasi-isomorphic $A_\infty$ algebras (see for example~\cite[Lemma 5.3]{GJ}).
There is an isomorphism on the cyclic cohomology groups of $C^*(M)$ and the $A_\infty$ algebra $V$ of
Hamilton and Lazarev.

The operator $N:T(sV^*)\to T(sV^*)$ exactly models the base points rotation of Chas and Sullivan, and the Lie bracket and cobracket given
in Section 3 then models the concatenation and splitting of loops over the intersection points.
\end{proof}

\begin{remark}A similar construction of the Lie bialgebra
using the cochain model of Lambrechts and Stanley (\cite{LS}) which is a strict DG commutative algebra with a non-degenerate pairing
has been obtained by the author with Eshmatov and Gan (\cite{CEG}).
\end{remark}

\section{Application in symplectic topology}

In this section we give an example of the Lie bialgebra of Calabi-Yau $A_\infty$ categories
that arise from symplectic topology. The details are quite involved, so in the following we will
only sketch the main idea.

We start with the definition of the Maslov class of
Lagrangian submanifolds.
Let $(\mathbb R^{2n},\omega)$ be the standard symplectic vector space. Denote
by $\Lag_n=\Lag(\mathbb R^{2n},\omega)$ the set of all linear
Lagrangian subspaces. It is known that $\pi_1(\Lag_n)\cong\mathbb Z$.

Suppose $M$ is an exact symplectic manifold, and $L$ is an exact
Lagrangian submanifold (see Example~\ref{ex_fs} in \S2.1). Let
$\phi:(D^2,\partial D^2)\to (M,L)$ be a map representing
$\pi_2(M,L)$. For each $t\in\partial D^2$ we have a Lagrangian
subspace $T_{\phi(t)}L\subset T_{\phi(t)}M$, which gives a map
$S^1\to\Lag_n$. It determines an element in
$\pi_1(\Lag_n)\cong\mathbb Z$, which is called the {\it Maslov
index} of $[\phi]$, and is denoted by $\mu([\phi])$. The map
$[\phi]\mapsto\mu([\phi])$ is called the {\it Maslov class} of $L$.

In the following we assume all the exact Lagrangian submanifolds
have vanishing Maslov classes. It is proved by symplectic topologists (see, for example, \cite[Theorem 1.1]{Fuk09} and \cite[\S8]{Seidel1})
that morphism spaces of the
Fukaya category of Lagrangian submanifolds are vector spaces over
$k=\mathbb Q$ or $\mathbb R$, and are $\mathbb Z$ graded.
Moreover, there is a pairing of degree $n$ on the morphism spaces
$$\Hom(A,B)\otimes\Hom(B,A)\to\Hom(A,A)\stackrel{\mathrm{tr}}{\to} k$$
induced from the Poicar\'e duality, making the Fukaya category into
a Calabi-Yau $A_\infty$ category.
As a corollary to Theorem~\ref{thm_liebi}, we have the following:

\begin{theorem}
Let $M$ be an exact symplectic $2n$-manifold with contact type boundary. The cyclic cohomology
of the Fukaya category $\mathcal Fuk(M)$ of exact Lagrangian submanifolds has the structure of an involutive Lie bialgebra.
\end{theorem}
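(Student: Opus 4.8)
The plan is to deduce this statement directly from Theorem~\ref{thm_liebi} by verifying that the Fukaya category $\mathcal Fuk(M)$ of a $2n$-dimensional exact symplectic manifold with contact type boundary satisfies the hypotheses of that theorem, i.e., that it is a Calabi-Yau $A_\infty$ category of degree $n$. The essential input has already been collected in the paragraphs preceding the statement: under the standing assumption that all exact Lagrangian submanifolds have vanishing Maslov class, the morphism spaces $CF^*(L_i,L_j)$ are $\mathbb Z$-graded $k$-vector spaces (for $k=\mathbb Q$ or $\mathbb R$), the $A_\infty$ operators $\{m_i\}$ are defined by counting $J$-holomorphic disks as in Example~\ref{ex_fs}, and the Poincar\'e duality pairing $\Hom(A,B)\otimes\Hom(B,A)\to\Hom(A,A)\stackrel{\mathrm{tr}}{\to}k$ gives a degree $n$ non-degenerate graded symmetric pairing on the morphism spaces.

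The first step is to recall (citing \cite{Fuk09,Seidel1}) why this pairing is cyclically invariant in the sense of the definition of a Calabi-Yau $A_\infty$ category: this is the statement that the multilinear map $(a_0,\dots,a_n)\mapsto(-1)^{(n-2)|a_0|}\langle a_0,m_n(a_1,\dots,a_n)\rangle$ is invariant under cyclic permutation. Geometrically this symmetry reflects the fact that the moduli space of $J$-holomorphic disks with boundary marked points, which simultaneously computes $m_n$ and the pairing, carries no distinguished marked point once one incorporates the output into the pairing; the cyclic $\mathbb Z/(n+1)$ symmetry of the disk with $n+1$ boundary marked points then yields the invariance, with the signs working out as in the Koszul convention discussion of \S4. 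Once this is in place, $\mathcal Fuk(M)$ is by definition a Calabi-Yau $A_\infty$ category of degree $n$.

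With that verified, the second step is immediate: apply Theorem~\ref{thm_liebi} to $\mathcal A=\mathcal Fuk(M)$. That theorem gives the cyclic cochain complex $\Cycl^*(\mathcal Fuk(M))$ the structure of a DG involutive Lie bialgebra of degree $(2-n,2-n)$, and passing to homology yields an involutive Lie bialgebra structure on the cyclic cohomology. Since the bracket, cobracket, and differential $b$ are the ones constructed in Definition-Lemma (Lie algebra), Definition-Lemma (Lie coalgebra), and Theorem~\ref{thm_liebi}, nothing further needs to be checked; the compatibility (Drinfeld) and involutivity were already established there for an arbitrary Calabi-Yau $A_\infty$ category.

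The main obstacle is not algebraic but analytic: the statement rests on the foundational result, due to Fukaya and others, that the Fukaya category of an exact symplectic manifold with contact type boundary is well-defined as a Calabi-Yau $A_\infty$ category over $\mathbb Q$ or $\mathbb R$. This requires the exactness of $M$ and the Lagrangians (to rule out disk and sphere bubbling and guarantee the relevant compactness), the contact type boundary condition (so that $J$-holomorphic curves stay away from $\partial M$), and the vanishing Maslov class assumption (so that the Floer complexes are $\mathbb Z$-graded and the pairing has a well-defined degree $n$); these hypotheses are exactly the ones packaged into the statement, and we take the construction of $\mathcal Fuk(M)$ together with its cyclic (Calabi-Yau) structure as a black box from \cite{Fuk02,Fuk09,Seidel1}. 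Granting that, the corollary is a formal consequence of Theorem~\ref{thm_liebi}.
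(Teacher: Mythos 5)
Your proposal is correct and follows essentially the same route as the paper: the paper likewise treats the construction of $\mathcal Fuk(M)$ as a degree $n$ Calabi-Yau $A_\infty$ category (graded morphism spaces via the vanishing Maslov class assumption, cyclically invariant Poincar\'e duality pairing, citing \cite{Fuk02,Fuk09,Seidel1}) as external input and then deduces the statement as an immediate corollary of Theorem~\ref{thm_liebi}. Nothing further is needed.
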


\subsection{Symplectic field theory}
Symplectic field theory is a program initiated by Eliashberg, Givental and Hofer more than ten years ago
(\cite{EGH}). It has been widely studied by mathematicians and has
obtained many interesting results in symplectic and contact topology.
In the following we will touch only a small part which is necessary for our discussion.

Let $(W,\eta)$ be a co-oriented contact manifold, where $\eta$ is the contact form. There are two concepts associated to $W$:
\begin{itemize}\item The {\it symplectization} of $W$ is, by definition, $W\times\mathbb R$ with symplectic form
$d(e^s\eta)$, where $s$ is the coordinate of the factor $\mathbb R$.
\item A {\it Reeb orbit} in $W$ is a closed orbit of the Reeb vector field $Y$:
$$\eta(Y)=1,\quad\iota(Y)d\eta=0.$$
If the contact form $\eta$ is
generic, then the set of Reeb orbits is discrete. To each Reeb
orbit, one may assign an integer, called the {\it Conley-Zehnder index} of the orbit,
which is a version of Maslov index (\cite[\S1.2]{EGH}).
\end{itemize}
Choose a generic contact form and consider a $J$-holomorphic curve $\phi:\Sigma\to W\times\mathbb R$. A theorem of Hofer says that
if $\phi$ has non-removable singular points, then these singular points can only approach the Reeb orbits in $W$
at $\pm\infty$.
Moreover, since the Reeb orbits are discrete, one may consider
the moduli space of $J$-holomorphic curves with a given type. Namely, let
$\mathcal M(\Sigma; q_1^+,q_2^+,\cdots,q_m^+;q_1^-,q_2^-,\cdots,q_n^-)$ be the
set of $J$-holomorphic curves $\Sigma\to W\times\mathbb R$ such that the only singular points of $\Sigma$
are $q_1^+,q_2^+,\cdots,q_m^+$ at $+\infty$ and $q_1^-,q_2^-,\cdots,q_n^-$ at $-\infty$.
It is proved in symplectic field theory that the union of $\mathcal M(\cdots)$'s with all singular type
forms a stratified space, whose codimension great than zero
strata of a given singular type is described by the ``broken" curves, which, topologically,
can be realized as $J$-holomorphic curves which are stretched
to be infinitely long in the middle of $W\times\mathbb R$ at some time $s$.
$\mathcal M(\cdots)$ always admits an $\mathbb R$ action,
which shifts the $J$-holomorphic curves along the $\mathbb R$
factor in $W\times\mathbb R$.

Now suppose $M$ is a symplectic manifold with contact type boundary $W$.
The {\it linearized contact homology} of $W$ is defined as follows (for more details see \cite{CL}):
let $\Cont_*^{\mathrm{lin}}(W)$ be the linear space spanned by the Reeb orbits (technically, they should be {\it good} in the sense
of \cite{EGH}), graded by the Conley-Zehnder indices.
Define a linear operator
$$d:\Cont_*^{\mathrm{lin}}(W)\to \Cont_{*-1}^{\mathrm{lin}}(W)$$
by
$$d(q^+)=\sum_{q^-:|q^-|=|q^+|-1}\#(\mathcal M(\mbox{cylinder}; q^+;q^-)/\mathbb R)\cdot q^-.$$
The compactness of $\mathcal M(\cdots)$ makes $d$ into a differential.
The following is achieved in symplectic field theory by Cieliebak and Latschev (\cite[Theorem A]{CL}):

\begin{theorem}[Cieliebak and Latschev]
Let $M$ be a symplectic manifold with contact type boundary $W$. The chain complex $\Cont_*^{\mathrm{lin}}(W)$ has the structure
of a bi-Lie$_\infty$ algebra, which, induces on the linearized contact homology of $W$ an
involutive Lie bialgebra of degree $(-1,5-2n)$.
\end{theorem}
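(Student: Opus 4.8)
This is \cite[Theorem A]{CL}; I indicate the strategy one would follow. The plan is to run the symplectic field theory formalism of Eliashberg--Givental--Hofer (\cite{EGH}) and linearize it using the exact filling $M$, as Cieliebak and Latschev do. The algebraic object produced at the chain level is not an involutive Lie bialgebra but a bi-Lie$_\infty$ (equivalently, $\mathrm{IBL}_\infty$) structure: a family of operations $\mathfrak q_{k,\ell,g}\colon \Cont_*^{\mathrm{lin}}(W)^{\otimes k}\to\Cont_*^{\mathrm{lin}}(W)^{\otimes \ell}$, one for each number of inputs $k\ge 1$, outputs $\ell\ge 1$ and genus $g\ge 0$ with $k+\ell+g\ge 2$, obtained by signed counts of (virtual) $0$-dimensional moduli spaces of punctured $J$-holomorphic curves in the symplectization $W\times\mathbb R$ with $k$ positive and $\ell$ negative punctures asymptotic to Reeb orbits, augmented by curves in $M$; these satisfy a single quadratic master equation $\sum\mathfrak q\circ\mathfrak q=0$. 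Passing to homology then collapses the higher operations and leaves an ordinary involutive Lie bialgebra.

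Concretely, I would proceed in the following steps. \emph{(i)} Fix a generic contact form $\eta$ and compatible $J$ so that the Reeb orbits are nondegenerate and discrete, and grade $\Cont_*^{\mathrm{lin}}(W)$ by a normalization of the Conley--Zehnder index. \emph{(ii)} Define the binary operations: $d=\mathfrak q_{1,1,0}$ counts cylinders as in the text, the Lie bracket is a sign-corrected, $\mathbb R$-quotiented, filling-augmented count of genus-$0$ curves with two positive and one negative puncture, and the Lie cobracket counts genus-$0$ curves with one positive and two negative punctures; the exactness of $M$ is what makes the augmentation well defined and the counts finite, since there is no sphere or disk bubbling in $M$. \emph{(iii)} Apply the SFT compactness theorem (Bourgeois--Eliashberg--Hofer--Wysocki--Zehnder) to compactify the relevant $1$-dimensional moduli spaces by adding broken holomorphic buildings, and check by gluing that the codimension-$1$ boundary consists precisely of two-level buildings and once-broken cylinders; counting boundary points of a compact $1$-manifold then yields the $\mathrm{IBL}_\infty$ relations on the chain level. \emph{(iv)} Truncate: the $(1,1)$-part gives $d^2=0$, the $(2,1)$- and $(1,2)$-parts express that $d$ commutes with the bracket and cobracket, and the genus-$0$ pieces with $k+\ell=4$ give the Jacobi, co-Jacobi, Drinfeld compatibility and involutivity identities up to $d$-exact error. \emph{(v)} Pass to homology $CH_*^{\mathrm{lin}}(W)=H_*(\Cont_*^{\mathrm{lin}}(W),d)$: every operation with $k+\ell\ge 3$ or $g\ge 1$ shifts degree by a fixed negative amount, hence vanishes on homology for dimension reasons, so the induced bracket and cobracket satisfy the strict identities, i.e. define an involutive Lie bialgebra in the sense of Definition~\ref{def_Liebialg}. \emph{(vi)} Read off the bidegree from the Fredholm index formula for punctured curves in a $2n$-dimensional symplectization: after accounting for the $\mathbb R$-quotient and the grading normalization, the bracket has degree $5-2n$ and the cobracket has degree $-1$, which is the degree $(-1,5-2n)$ in the convention of Definition~\ref{def_Liebialg}.

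The main obstacle, and the reason this is genuinely a theorem of symplectic field theory rather than a formal consequence of the earlier sections, is transversality: the moduli spaces of punctured holomorphic curves (especially multiply covered ones and curves inside the filling) are in general not cut out transversally for any choice of $J$, so the operations $\mathfrak q_{k,\ell,g}$ must be defined through an abstract perturbation scheme --- polyfolds, Kuranishi structures, or virtual fundamental cycles --- after which one must still verify that compactness and gluing identify the codimension-$1$ boundary exactly as the master equation requires. This is the technical heart of \cite{CL}, which I take as established there; the extraction of the involutive Lie bialgebra on homology from the bi-Lie$_\infty$ structure, and the index bookkeeping giving the bidegree, are then routine.
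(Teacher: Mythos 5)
Your proposal takes essentially the same route as the paper: the statement is quoted from \cite[Theorem A]{CL}, and both you and the paper only sketch the moduli-space picture (punctured curves in the symplectization, linearized via the exact filling, with the bi-Lie$_\infty$ relations read off from the codimension-one boundary of one-dimensional moduli spaces), deferring transversality, compactness and gluing to Cieliebak--Latschev. Two small corrections to your sketch: involutivity up to homotopy is witnessed by the genus-one operation (tori with one positive and one negative puncture), as the paper notes, not by genus-zero curves with $k+\ell=4$; and the strict identities on homology hold because the chain-level defects are $d$-exact (the higher operations serve as chain homotopies), not because those operations ``vanish for dimension reasons'' --- they need not vanish, they simply no longer obstruct the binary structure after passing to homology.
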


A bi-Lie$_\infty$ algebra is a homotopy version of an involutive Lie
bialgebra, where the Jacobi identity, co-Jacobi identity, Drinfeld
compatibility and involutivity all hold up to homotopy (we shall not
go to the details of bi-Lie$_\infty$ algebras; for some more
discussions, see \cite{CL}). Basically, the Lie bracket is given by
counting the $J$-holomorphic spheres with two non-removable singular
points at $+\infty$ and one non-removable singular point at
$-\infty$ in $W\times\mathbb R$. Namely, suppose $q_1,q_2$ are two
Reeb orbits in $W$. Then the Lie bracket
$$[q_1,q_2]:=\sum_{q}\#(\mathcal M(\mbox{sphere}\backslash\mbox{\{3 points\}}; q_1^+,q_2^+;q^{-})/\mathbb R)\cdot q,$$
where $\mathcal M(\mbox{sphere}\backslash\mbox{\{3 points\}}; q_1^+,q_2^+;q^{-})$ is the moduli space
of $J$-holomorphic spheres with 3 non-removable singular points, two of which approach $q_1$ and $q_2$ in $W$ at $+\infty$ and
the third one approaches $q$ in $W$ at $-\infty$.
By considering the $J$-holomorphic spheres with 3 non-removable singular points at $+\infty$ and 1 non-removable singular point
at $-\infty$ in $W$, one obtains that the Jacobi identity for the Lie bracket holds up to homotopy.

In a similar fashion one may define the Lie cobracket and prove that the co-Jacobi identity holds up to homotopy, too. The involutivity
and Drinfeld compatibility hold up tomotopy due to the study of the moduli space of $J$-holomorphic tori with 2 non-removable singular points
and $J$-holomorphic spheres with 4 non-removable singular points (2 at $+\infty$ and 2 at $-\infty$) respectively.

In the theorem of Cieliebak-Latschev, if we shift the degrees of the Reeb orbits up by $n-3$, then the Lie bialgebra on the
linearized contact homology has degree $(2-n,2-n)$. In the following we take this degree shifting.

\subsection{From Fukaya category to symplectic field theory}

Let $M$ be an exact symplectic manifold with contact type boundary $W$.
$M$ has a {\it symplectic completion}, which is $$M\cup_{id:W\to W\times\{0\}}(W\times\mathbb R^{\ge 0}),$$ and is denoted by $\tilde M$.
$\tilde M$ is a symplectic manifold whose symplectic form is induced from $M$ and $W\times\mathbb R^{\ge 0}$.

\begin{theorem}\label{thm_liebimap}
Let $M$ be a symplectic manifold with contact type boundary $W$, and
let $\tilde M$ be the symplectic completion.
Let
$$f:\Cont_*^{\mathrm{lin}}(W)\to\Cycl^*(\mathcal Fuk(M))$$
be the map such that the value of
$f(q)$, $q\in\Cont_*(W)$, at $(a_1,a_2,\cdots,a_n)\in\Hom(A_1,A_2)\otimes\Hom(A_2,A_3)\otimes\cdots\otimes\Hom(A_n,A_1)$ is the cardinality
of $J$-holomorphic disks whose boundary lies in $A_1\cup A_2\cup \cdots\cup A_n$ and goes through $a_1,a_2,\cdots,a_n$ in cyclic order
and whose non-removable singular point in $\tilde M$ at
$+\infty$ approaches $q$.
Then $f$
is a map of chain complexes and induces on the homology a map of Lie bialgebras.
\end{theorem}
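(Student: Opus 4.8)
The strategy is the standard symplectic field theory argument of ``counting the boundary of a one--dimensional moduli space,'' applied to $J$--holomorphic disks in the symplectic completion $\tilde M$ with boundary on the exact Lagrangians, carrying one or two interior punctures asymptotic at $+\infty$ to Reeb orbits in the cylindrical end $W\times\mathbb R^{\ge 0}$. First I would fix a generic $\omega$--compatible almost complex structure $J$ on $\tilde M$ which is translation invariant of the standard form on $W\times\mathbb R^{\ge 0}$, compatible with the data defining $\mathcal{Fuk}(M)$ over $M$ and with the data Cieliebak--Latschev \cite{CL} use over $W$, and then introduce, for Reeb orbits $q_1,\dots,q_k$ and objects $A_1,\dots,A_n$ of $\mathcal{Fuk}(M)$, the moduli space $\mathcal M(q_1,\dots,q_k;A_1,\dots,A_n)$ of $J$--holomorphic disks in $\tilde M$ with boundary on $A_1\cup\cdots\cup A_n$ in cyclic order, with $k$ positive interior punctures asymptotic to $q_1,\dots,q_k$, together with boundary marked points recording the inputs. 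Because $M$ is exact and the Lagrangians have vanishing Maslov class, there is no bubbling of $J$--holomorphic spheres or of disks of negative Maslov index; after a coherent abstract perturbation (Kuranishi or polyfold) of the Cauchy--Riemann operator---chosen compatibly with the $A_\infty$ operations of $\mathcal{Fuk}(M)$ and with the differential, bracket and cobracket of $W$ in the sense of Cieliebak--Latschev---these moduli spaces should be compact oriented manifolds with corners of the expected dimension, with coherent orientations in the sense of \cite{EGH}. A Fredholm index computation, using the degree shift of the Reeb orbits by $n-3$, then shows $f$ is degree preserving, and only the rigid components of the $\mathcal M(\cdots)$ enter the formulas below.

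Next I would check that $f$ is a chain map by reading off the ends of the one--dimensional part of $\mathcal M(q;A_1,\dots,A_n)$. By SFT compactness and Hofer's theorem on the asymptotics of punctured curves, these ends are of exactly two kinds: either a $J$--holomorphic cylinder in $W\times\mathbb R$ from $q$ to a Reeb orbit $q'$ splits off at $+\infty$---such a cylinder is a term of $d(q)$ and the remaining disk with puncture $q'$ feeds $f(q')$---or a Fukaya disk realizing one of the operations $m_k$ of $\mathcal{Fuk}(M)$ bubbles off along the Lagrangian boundary, leaving a disk with puncture $q$; summed over all possibilities, this second family reproduces exactly the cyclic differential $b$ applied to the cochain $f(q)$. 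Since the signed boundary count of a compact one--manifold vanishes, $f\circ d=\pm\,b\circ f$, so $f$ descends to a map on homology.

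The bracket compatibility I would obtain by the same analysis for the moduli spaces $\mathcal M(q_1,q_2;A_1,\dots,A_n)$ of disks with two positive interior punctures. I expect three kinds of ends. First, neck stretching in $W\times\mathbb R$ produces a three--punctured sphere with $q_1,q_2$ at $+\infty$ and a single orbit $q$ at $-\infty$---a term of $[q_1,q_2]$ in $\Cont_*^{\mathrm{lin}}(W)$---glued to a disk in $\tilde M$ with one puncture asymptotic to $q$; summed, these give $f([q_1,q_2])$. Second, an interior arc of the disk with both endpoints on the Lagrangian boundary pinches, separating the disk into two disk components, one carrying $q_1$ and the other $q_2$; here the Poincar\'e--duality pairing on the morphism spaces of $\mathcal{Fuk}(M)$ is exactly the contraction $\omega^{pq}$, so this family is precisely the Lie bracket $[f(q_1),f(q_2)]$ on $\Cycl^*(\mathcal{Fuk}(M))$. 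The remaining ends---breaking of one of the two punctures at $+\infty$, or $m_k$--bubbling---are of the form $b(\cdots)$ or involve $d$, hence vanish on homology. So $f([q_1,q_2])=[f(q_1),f(q_2)]$ on homology. The cobracket compatibility is entirely dual: one enlarges the one--puncture moduli space by the datum defining $\delta$ (a pair of boundary points to be glued through the pairing), and its codimension--one boundary splits into a family in which those points collide and the disk's boundary circle pinches---recovering $\delta(f(q))$---and a family in which the neck is stretched off into a three--punctured sphere with $q$ at $+\infty$ and two orbits at $-\infty$---recovering $(f\otimes f)(\delta q)$---together with $b$-- and $d$--exact terms. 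Hence $f\circ\delta=\delta\circ f$ on homology, and combining the three steps, $f$ induces a morphism of Lie algebras and of Lie coalgebras, hence of Lie bialgebras, the Drinfeld compatibility and involutivity being properties of each side that are automatically preserved.

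The main obstacle is entirely analytic, which is why only a sketch is feasible here: one must establish transversality and compactness for punctured $J$--holomorphic disks in $\tilde M$ simultaneously with respect to disk bubbling at the Lagrangian boundary---where the perturbation data must be coherent with the $A_\infty$ structure of $\mathcal{Fuk}(M)$---and SFT breaking at the cylindrical end---where they must agree with the data defining the differential, bracket and cobracket of Cieliebak--Latschev on $\Cont_*^{\mathrm{lin}}(W)$---and one must keep track of the coherent orientations carefully enough that the cancellations in the second and third steps hold with signs. Granting the foundations of the Fukaya category and of symplectic field theory already invoked above, the identification of boundary strata carried out above yields the theorem.
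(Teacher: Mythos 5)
Your proposal follows essentially the same route as the paper's sketch: $f$ is defined by counting one-punctured disks, the chain-map property comes from the two types of ends (cylinder breaking at $+\infty$ versus Fukaya disk bubbling) of one-dimensional one-punctured moduli spaces, the bracket compatibility from two-punctured disks whose degenerations are either SFT pair-of-pants breaking, boundary pinching at a Lagrangian intersection, or homotopy terms, and the cobracket from curves with two Lagrangian boundary components and one positive puncture. The only presentational difference is that the paper packages the ``extra'' ends into an explicit chain homotopy $f_2$ (counting two-punctured disks), whereas you simply note those terms are $d$- or $b$-exact and vanish on homology; both rest on the same unproved transversality, compactness and orientation assumptions, which you in fact flag more carefully than the paper does.
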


\begin{proof}[Sketch of proof]First, we show $f$ is a chain map.
For any $q\in\Cont_*(W)$, and $(a_1,a_2,\cdots,a_n)\in\Hom(A_1,A_2)\otimes\Hom(A_2,A_3)\otimes\cdots\otimes\Hom(A_n,A_1)$,
consider the moduli space of $J$-holomorphic disks with boundaries in $A_1\cup A_2\cup\cdots\cup A_n$ and through $a_1,a_2,\cdots,a_n$
in cyclic order and with a non-removable singular point approaching $q$.
Suppose the moduli space is 1-dimensional, then its boundary is composed of the following two types of broken $J$-holomorphic curves:
\begin{itemize}
\item The first type is a broken curve
whose first piece is a $J$-holomorphic cylinder in $W\times\mathbb R$ with singular points approaching $q$ at $+\mathbb R$ and $q'$ at
$-\infty$, and whose second piece is a $J$-holomorphic disk with its boundary lying in $A_1\cup A_2\cup\cdots\cup A_n$ as before and with
a non-removable singular point approaching $q'$ in $\tilde M$. In other words, it comes from stretching the original singular
$J$-holomorphic disk in
$\mathbb R^+$ direction at some middle point in $\tilde M$;
\item The second type is a broken curve
whose first piece is a $J$-holomorphic disk with its boundary lying in $A_1\cup\cdots\cup A_{i}\cup A_{j}\cup\cdots\cup A_n$ and going
through $a_1,\cdots, a_{i-1}, p,a_j,\cdots, a_n$ in cyclic order
and with a non-removable singular point approaching $q$ at $+\infty$, and whose second piece is a $J$-holomorphic curve with
its boundary lying in $A_i\cup A_{i+1}\cup\cdots\cup A_j$ and going through $p,a_i,a_{i+1},\cdots, a_{j-1}$, where
$p\in A_i\cap A_j$.
In other words, it comes from the bubbling-off (of disks) from the original singular $J$-holomorphic disk.
\end{itemize}
Now to show $f(dq)=b(f(q))$, consider their values at
$(a_1,a_2,\cdots,a_n)\in\Hom(A_1,A_2)\otimes\Hom(A_2,A_3)\otimes\cdots\otimes\Hom(A_n,A_1)$.
Note $f(dq)(a_1,a_2,\cdots,a_n)$ is exactly the number of broken $J$-holomorphic curves of the first type
and $b(f(q))(a_1,a_2,\cdots,a_n)=f(q)(b(a_1,a_2,\cdots,a_n))$ is exactly the number of broken $J$-holomorphic
curves of the second type.
It follows that their cardinalities are equal.

Next we show that $f$ maps the bracket to the bracket up to homotopy, which induces a Lie algebra map on the homology.
For $q_1,q_2\in\Cont_*^{\mathrm{lin}}(W)$ and
$(a_1,a_2,\cdots,a_n)\in\Hom(A_1,A_2)\otimes\Hom(A_2,A_3)\otimes\cdots\otimes\Hom(A_n,A_1)$, we
now consider $J$-holomorphic disks with their boundary
in $A_i\cup A_2\cup\cdots\cup A_n$ and through $a_1,a_2,\cdots,a_n$ in cyclic order,
and having
two non-removable singular points approaching $q_1,q_2$ in $W$ at $+\infty$.
Suppose the moduli space is 1-dimensional, then its boundary points are composed of four types of broken $J$-holomorphic
curves (imaging a pair of pants are placed vertically with two sleeves at $+\infty$ and one at $0$):
\begin{itemize}
\item The broken point of the pants occurs at one of the upper sleeves but not the other; in other words, one
of them is stretched to be infinitely long.
\item The broken point of the pants occurs at the bottom sleeve; in other words, there is a bubbling off of a disk at the bottom sleeve.
\item The broken point of the pants occurs at somewhere between the saddle point and the bottom sleeve;
\item The broken point occurs when the saddle point is pushed down to $0$.
\end{itemize}
These four cases corresponds to the following operations respectively:
(1) $f_2(dq_1,q_2)$ and $f_2(q_1,dq_2)$; (2) $b(f_2[q_1,q_2])$; (3) $f[q_1,q_2]$; and (4) $[f(q_1),f(q_2)]$.
Here by $f_2(-,-)$ we mean counting the $J$-holomorphic disks with two non-removable singular points.
It follows then that
$$f[q_1,q_2]-[f(q_1),f(q_2)]=f_2(dq_1,q_2)+f_2(q_1, dq_2)-bf_2(q_1,q_2),$$
that is, $f$ is a Lie algebra map up to homotopy.

Finally we show that $f$ maps the cobracket to the cobracket up to homotopy. By considering
$J$-holomorphic cyclinders with both boundaries fixed on two sets of cyclic chain complex in $\mathcal Fuk(M)$ and with
1 non-removable singular point approaching a Reeb orbit in $W$ at $+\mathbb R$ (or equivalently, the $J$-holomorphic
pants with two sleeves fixed in the Lagrangian submanifolds and one sleeve going to $+\infty$), with
the similar argument as above, one sees that $f$ is a Lie coalgebra map up to homotopy.
\end{proof}

From the proof, one sees that if we consider all $J$-holomorphic curves with several non-removable
singular points and several boundaries, then $f$ is in fact a map of bi-Lie$_\infty$ algebras.
It is interesting then to ask when $f$ induces an isomorphism of Lie bialgebras on the homology level.
We hope to address this problem somewhere else.

\end{document}